\newcommand{\nc}{\newcommand}
\nc{\G}{{\Gamma}}
\nc{\BC}{{\mathbb C}}
\nc{\BQ}{{\mathbb Q}}
\nc{\BR}{{\mathbb R}}
\nc{\BZ}{{\mathbb Z}}
\nc{\BP}{{\mathbb P}}
\nc{\BN}{{\mathbb N}}
\nc{\BM}{{\mathbb M}}
\nc{\fH}{{\mathfrak{H}}}
\newtheorem{thm*}{\bf Theorem}[section]
\newtheorem{ques}{\bf Question}[section]
\newtheorem{thm}{\bf Theorem}[section]
\newtheorem{cor}{\bf Corollary}[section]
\newtheorem{lemma}{\bf Lemma}[section]
\newtheorem{prop}{\bf Proposition}[section]
\newtheorem{defn}{\bf Definition}[section]
\newtheorem{rem}{\bf Remark}[section]
\renewcommand{\>}{\rangle}
\newcommand{\w}{\omega}
\newcommand{\dist}{\operatorname{dist}}
\renewcommand{\phi}{\varphi}
 \newcommand{\Ir}{{\text{Irr}}}
\newcommand{\Po}{{\text{Pol}}}
\newcommand{\su}{{\text{supp}}}
\renewcommand{\Im}{\operatorname{Im}}
\renewcommand{\Re}{\operatorname{Re}}
\newcommand{\RR}{{\mathbb R}}
\newcommand{\CC}{{\mathbb C}}
\newcommand{\QQ}{{\mathbb Q}}
\newcommand{\NN}{{\mathbb N}}
\newcommand{\ZZ}{{\mathbb Z}}
\newcommand{\C}{{\mathcal C}}
\newcommand{\OO}{{\mathcal O}}
\newcommand{\p}{{\mathcal P}}
\newcommand{\K}{{\mathcal K}}
\newcommand{\R}{{\mathcal R}}
\newcommand{\D}{{\mathcal D}}
\newcommand{\U}{{\mathcal U}}
\newcommand{\M}{{\mathcal M}}
\newcommand{\BB}{{\mathcal B}}
\begin{document}

\title{On a Fekete-Szeg\"o theorem }
\author{ Th\'er\`ese Falliero }

\address{Th\'er\`ese Falliero \\ Avignon Universit\'e, Laboratoire de math\'ematiques d’Avignon (EA 2151),  F-84018 Avignon, France }

   \email{therese.falliero@univ-avignon.fr}
   
   \keywords{Capacity, Fekete-Szeg\"o theorem, calibration, Motzkin's theorem, Minkowski's theorem  }

\maketitle 

\hspace{7cm}

\begin{abstract}
 We consider again a classical theorem relating capacities and algebraic integers and the question of  the simultaneous approximation of $ n-1$ different complex numbers by conjugate algebraic integers of degree $n$.
\end{abstract}

\bigskip

{\it This is a preliminary version of a further more complete paper, which will be co-written by Ahmed Sebbar.}

\section{Introduction}

A classical theorem of Fekete and Szeg\"o \cite{fs} says that, given a compact set $K$  in the complex plane   having logarithmic capacity $\C(K)$, then
\begin{enumerate}[(a)] 
\item If $ \C(K)< 1$, there is an open set $U$ containing  $K $    such that there are only finitely
many algebraic integers $\alpha$ having all of their conjugates in $U$, and
\item if $\C(K)\geq 1$, then every open $U$ containing $K $   contains infinitely many such $ \alpha$.
\end{enumerate}
Furthermore Fekete and Szeg\"o  proved also that if $K$ is a compact set in the complex plane, stable under complex conjugation and having logarithmic capacity $\C(K)\geq 1$, then every neighborhood of $K$ contains infinitely many conjugate sets of algebraic integers.
In \cite{Ennola} V. Ennola  solved a question raised by R. M. Robinson that if  $\Delta$ is any real interval of length greater than 4, then
for any sufficiently large $n$ there exists an irreducible monic polynomial of degree $n$ with integer coefficients all of whose zeros lie in $\Delta$ .

We should emphasize that many diophantine inequalities are reduced to the existence of lattice points in some convex body \cite[Chapter III]{Cassel}. In this direction, it is remarkable that T. Chinburg \cite{Chinburg} reduces the proof of Fekete theorem to an application of Minkowski's Convex Body theorem \cite[Chapter III]{Cassel} that we recall for later use.

This problem is actually related to a precise form of the Stone-Weierstrass theorem. This classical theorem states that every continuous function defined on a closed interval $[a, b]$ can be uniformly approximated by polynomials. A more general statement is if $X$ is a compact Hausdorff topological space and if $ C(X)$ is the algebra of real-valued continuous functions $f:X\rightarrow \BR$, then a subalgebra  ${\mathcal A}\subset C(X)$  is dense if and only if it separates points. 

The question is for which compact set continuous functions can be  approximated by polynomials with integer coefficients? This question is a major one in approximation theory and the literature is very extensive \cite{ferg}.
 Let $f$ be a continuous real-valued function defined on $[0,1]$, then the sequence of polynomials $(p_n)$ defined by
 \[p_n(x)= \sum_{\nu=0}^n f\left(\frac{\nu}{n} \right)\left(\begin{matrix}n\\\nu \end{matrix} \right) x^{\nu}(1-x)^{n-\nu}
 \]
converges uniformly to $f$. This is therefore a constructive proof of the  Stone-Weierstrass theorem. It is due to Bernstein. We deduce from this result \cite[Theorem 5]{ferg2} that for a continuous real-valued function $f$ on the unit interval  $I= [0, 1]$ to be uniformly approximable by polynomials with integer coefficients it is necessary and sufficient that it be integer-valued at both $0$ and $1$.

 As was pointed out in \cite[Lemma 1]{ferg2}, If $q$ is a non constant polynomial with integer coefficients and $I$ is an interval of length at least four, then $\Vert q\Vert \leq2$, where $\Vert .\Vert $ is the supremum norm. Hence, clearly, the approximation by polynomials with integer coefficients on a set $E$ is related to to the capacity of $E$. Consequently \cite[Theorem 2 ]{ferg2} if the capacity $\C(I)\geq 1$ the only functions that are uniformly approximable in $I$ by polynomials with integer coefficients are these polynomials themselves. On the other hand one can prove  the surprising result that any  $f\in L^2([a,\,b]),\; b-a<4$ can be approximated on the interval $[a, b]$ by polynomials with integer coefficients.

\begin{thm}[P\'al]
If $f$ is continuous on $[-a,a], |a|<1$, and $f(0)$ is an integer then $f$ may be uniformly approximated thereon by polynomials with integer coefficients.
\end{thm}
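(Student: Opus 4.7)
The plan is to reduce, via the Weierstrass approximation theorem, to approximating a single real monomial $c x^k$ by a polynomial with integer coefficients, and then to handle that monomial by a two-scale Diophantine construction that exploits $|a|<1$ (we may assume $0<a<1$), which makes $x^N$ uniformly small on $[-a,a]$ for large $N$.

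First I subtract the integer $f(0)$ to reduce to the case $f(0)=0$. Given $\varepsilon>0$, Weierstrass yields $Q\in\BR[x]$ with $\|f-Q\|_{[-a,a]}<\varepsilon/2$; replacing $Q$ by $Q-Q(0)$ costs at most $|Q(0)|<\varepsilon/2$ in sup norm, so I may assume $Q(x)=c_1 x+c_2 x^2+\cdots+c_d x^d$. Summing approximations term by term, it suffices to approximate each $c x^k$ ($k\geq 1$, $c\in\BR$) within $\varepsilon/(2d)$ in sup norm on $[-a,a]$ by a polynomial in $\BZ[x]$ vanishing at $0$. Splitting off the nearest integer to $c$, I may further assume $c=r$ with $|r|\leq 1/2$.

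For this step I fix a large even integer $N$ and a large integer $M$ and seek
\[
P(x) \;=\; A_0 x^k + A_1 x^{k+N} + A_2 x^{k+2N} + \cdots + A_M x^{k+MN}, \qquad A_j\in\BZ,
\]
so that $P(x)-r x^k = x^k h(x^N)$ with $h(y)=(A_0-r)+A_1 y+\cdots+A_M y^M$. I take $A_0$ as the nearest integer to $r$ and, for $j\geq 1$, $A_j$ as the nearest integer to the real number $(A_0-r)\binom{M}{j}(-1)^j/a^{jN}$; the effect is that $h(y)$ approximates, up to rounding, the real polynomial $(A_0-r)(1-y/a^N)^M$ on $[0,a^N]$ (the choice of $N$ even ensures $y=x^N\geq 0$).

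The main obstacle is uniform control on all of $[-a,a]$. Since $|A_0-r|$ is bounded below by $\dist(r,\BZ)$, the polynomial $h$ itself cannot be made small near $y=0$; it is the prefactor $|x|^k$ that saves the estimate there. Writing $h(y)=(A_0-r)(1-y/a^N)^M+e(y)$ with rounding error satisfying $|e(y)|\leq|y|/(2(1-|y|))$, and substituting $u=y/a^N\in[0,1]$ (so $|x|^k=a^k u^{k/N}$), a direct calculation gives
\[
\|P-r x^k\|_{[-a,a]} \;\leq\; \frac{a^k}{2}\max_{u\in[0,1]}u^{k/N}(1-u)^M \;+\; \frac{a^{k+N}}{2(1-a^N)}.
\]
The first term tends to $0$ as $M\to\infty$ by the classical beta-function maximum bound (the maximum is attained at $u^*=(k/N)/(k/N+M)\to 0$), and the second tends to $0$ as $N\to\infty$ since $a<1$. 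Picking $N$ first, then $M$, sufficiently large drives the total bound below $\varepsilon/(2d)$; summing over the $d$ monomials of $Q$ produces the desired integer polynomial within $\varepsilon$ of $f$.
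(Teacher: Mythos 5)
Your argument is correct, and it is worth noting that the paper itself does not prove this statement: P\'al's theorem is quoted as classical background (only the Bernstein-polynomial argument for $[0,1]$ is sketched nearby), so there is no in-text proof to compare with. Your route is a legitimate self-contained one: reduce by Weierstrass and by splitting off nearest integers to a single monomial $r x^{k}$ with $|r|\le 1/2$, then absorb it with the two-scale family $P(x)-rx^{k}=x^{k}h(x^{N})$, where $h$ is the coefficientwise integer rounding of $(A_0-r)(1-y/a^{N})^{M}$. The two estimates you need both check out: the rounding error is a geometric series in $y=x^{N}\in[0,a^{N}]$ (here the evenness of $N$ and $a<1$ are used), giving the uniform bound $a^{k+N}/\bigl(2(1-a^{N})\bigr)$, and the main term is controlled by $\max_{u\in[0,1]}u^{k/N}(1-u)^{M}$, which indeed tends to $0$ as $M\to\infty$ for fixed $N$ and $k$ (its value is $\bigl(\alpha/(\alpha+M)\bigr)^{\alpha}\bigl(M/(\alpha+M)\bigr)^{M}$ with $\alpha=k/N$), so choosing $N$ first and then $M$ works. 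This differs from the classical proofs of P\'al--Kakeya--Ferguson, which typically reduce to approximating the linear functions $\alpha x$ on $[-a,a]$ and argue via series/greedy digit expansions that the set of approximable slopes is all of $\BR$; your construction instead handles each monomial directly and has the advantage of being completely explicit, with quantitative error bounds, at the cost of very large degrees $k+MN$. The only blemish is bookkeeping: after recentering $Q$ by $Q(0)$ your running error is already $\varepsilon$, so the final bound is $3\varepsilon/2$ rather than $\varepsilon$ as stated; this is harmless since $\varepsilon$ is arbitrary (start from $\varepsilon/3$ budgets).
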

The proof can be found in \cite{pal}, see also \cite{ferg}, p.1.

These problems are at the core of the approximation theory by polynomials with integer coefficients. 

As Chebyshev polynomials will be of great importance in this paper, we recall some facts about them.
For a given compact $K\subset {\BC}$, a 
monic polynomial $p(z)\in {\BC}[z]$ of degree $n \geq 0$ is called the 
$n$-th polynomial of least deviation (from zero) or the Chebyshev polynomial of
degree n
 if $||p||_K \leq||q||_K$ 
for any monic polynomial $q(z)\in {\BC}[z]$ of degree $n$, where 
$||p||_K= \max_{z\in K}\{|p(z)|\}$.  That such a polynomial exists and is unique is a classical result (see for example, \cite{Ts} Theorem III.23).
For the following, let us observe, in the case of
 the interval $[-2\ ,\ 2]$,  that the theory of the Chebyshev polynomials $T_n(X)$
with $T_n (\cos\theta)= 2\cos(n\theta),~ n\in\BN$, is a part
of the spectral analysis of the matrix $ A=(a_{i,j})_{1\leq i,j},~
a_{i,i+1}=a_{i,i-1}=1$ and $a_{i,j} =0$ otherwise.

The classical Chebyshev result states that for an interval $[a,b]$,
$$\inf_{Q}\|Q\|_{\infty} = 2\left(\frac{b-a}{4}\right)^n,$$
where the infimum is taken over all monic polynomials $Q$ of degree $n$ with real coefficients.

 Hilbert showed  in \cite{Hilbert} that 
$$\inf_{Q}\|Q\|_{L^2([a,b])} \leq C\sqrt{n}\left(\frac{b-a}{4}\right)^{n/2},$$
where the infimum is over monic polynomials of degree $n$ with integer coefficients, and $C > 0$ is an absolute constant.
Fekete showed in \cite{Fekete} the more flexible
\[\inf_{Q} \Vert Q(z) \Vert_{\infty}\leq 2^{1-\frac{1}{n+1}} (n+1) \left( \frac{b-a}{4}\right)^{n/2},
\]
where the infimum is again over monic polynomials of degree $n$ with integer coefficients.

For a set $V\subset\CC$  let $\Po_V$ be the set of  monic polynomials in $\ZZ[X]$ of degree at least 1 such that all their roots lie in $V$.
Let $E\subset \RR$ be a finite union of segments with $\C(E)>1$ and $\mu_E$ be its equilibrium measure. We have the following theorem (\cite{Se} Theorem 1.6.2).
\begin{thm}[Serre]
There exists a sequence of polynomials $P_n\in {\rm Pol}_E$ such that $\mu_{P_n}\to \mu_E$.

\end{thm}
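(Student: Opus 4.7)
The plan is to build $P_n$ as a small perturbation of a real polynomial whose roots are chosen to be $\mu_E$-equidistributed from the outset, the perturbation being performed by a Minkowski-type convex body argument in the spirit of Chinburg's proof of the Fekete--Szeg\"o theorem recalled in the introduction.

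For each $n$, I would partition $E$ into $n$ subarcs of equal $\mu_E$-measure and select $x_i^{(n)}$ in the interior of the $i$-th subarc, so that $\tfrac{1}{n}\sum_{i=1}^n\delta_{x_i^{(n)}} \to \mu_E$ weakly by construction. A standard potential-theoretic computation then gives
\[
\frac{2}{n^2}\log V_n \;\longrightarrow\; \iint\log|x-y|\,d\mu_E(x)\,d\mu_E(y) \;=\; \log\C(E),
\]
where $V_n := \prod_{i<j}|x_j^{(n)}-x_i^{(n)}|$; since $\C(E)>1$ this forces $V_n^{1/n} \sim \C(E)^{n/2}\to\infty$.

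Next I would consider the coefficient map
\[
F_n\colon (\delta_1,\dots,\delta_n)\longmapsto \Bigl(\text{coefficients of }\prod_{i=1}^n(X-x_i^{(n)}-\delta_i)\Bigr)\in\RR^n,
\]
whose Jacobian at $\delta=0$ has absolute value equal to $V_n$ (the classical Jacobian of the elementary-symmetric-function map). For the cube $[-\epsilon_n,\epsilon_n]^n$, the image is, to first order, a parallelepiped of volume of order $V_n(2\epsilon_n)^n$. Choosing $\epsilon_n\sim V_n^{-1/n}\sim \C(E)^{-n/2}$ makes this volume exceed $2^n$ while forcing $\epsilon_n\to 0$ exponentially. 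A Minkowski-type rounding, following Chinburg's argument, then yields $\delta^{(n)}\in[-\epsilon_n,\epsilon_n]^n$ with $F_n(\delta^{(n)})\in\ZZ^n$. Setting $P_n(X):=\prod_{i=1}^n(X-x_i^{(n)}-\delta_i^{(n)})$ we obtain a monic polynomial in $\ZZ[X]$; since the distance from $x_i^{(n)}$ to $\partial E$ is at worst polynomial in $n$, whereas $\epsilon_n$ is exponentially small, for $n$ large every root remains in $E$, i.e.\ $P_n\in\Po_E$. Finally $|\delta_i^{(n)}|\le\epsilon_n\to 0$ uniformly, so $\mu_{P_n}$ and $\tfrac{1}{n}\sum_i\delta_{x_i^{(n)}}$ share the weak limit $\mu_E$.

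The delicate point, and the main obstacle, is to make the Minkowski rounding step fully rigorous: the image $F_n([-\epsilon_n,\epsilon_n]^n)$ is a convex body ``tilted'' relative to $\ZZ^n$ and possibly elongated in certain directions, so the bare volume estimate does not automatically produce an integer point inside it. One therefore has to work in Chinburg's dual formulation -- applying Minkowski to the lattice of integer monic polynomials of degree $n$ viewed via their values at $x_1^{(n)},\dots,x_n^{(n)}$ -- and then verify that the resulting polynomial has its roots sufficiently close to the prescribed $x_i^{(n)}$ to stay in $E$. Once this quantitative rounding lemma is established, the remaining verifications are routine, and the exponential smallness $\epsilon_n\sim\C(E)^{-n/2}$ provides ample room.
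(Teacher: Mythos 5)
Your construction stands or falls on the ``rounding'' step, and as written that step is a genuine gap, not a routine verification. Minkowski's convex body theorem gives a nonzero lattice point in a convex body that is \emph{symmetric about the origin}; here you need a point of the affine lattice $X^n+\ZZ^n$ inside the image $F_n([-\epsilon_n,\epsilon_n]^n)$, which is a \emph{translated}, non-symmetric and (because $F_n$ is nonlinear) non-convex set. For translated bodies the relevant quantity is a covering radius, not a volume: a body of enormous volume can miss the lattice entirely if it is thinner than the lattice spacing in even one direction. That is exactly the danger here. The differential $DF_n$ at $\delta=0$ has determinant $\pm V_n\approx\C(E)^{n^2/2}$, but its smallest singular value is governed by the Lagrange-interpolation data $1/|P_n'(x_i^{(n)})|$ and the powers $|x_i^{(n)}|^{n-k}$, and it can be exponentially small compared with $V_n^{1/n}$; with your isotropic choice $\epsilon_n\sim V_n^{-1/n}\sim\C(E)^{-n/2}$ the image is then of sub-unit width in some coefficient directions, and no volume count can force an integer point into it. Invoking ``Chinburg's dual formulation'' does not repair this: Chinburg's Minkowski argument produces \emph{non-monic} integer polynomials that are small on a set of capacity $<1$; it neither enforces monicity nor controls where the roots of the resulting polynomial lie, which is the whole difficulty when one insists that the roots stay in $E$ itself rather than in a neighborhood. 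On top of this, even granting a lattice point in the linearized parallelepiped, you would still have to control the nonlinear distortion of $F_n$ over the cube (its second derivatives are exponentially large in $n$) before concluding that the preimage $\delta^{(n)}$ exists and is bounded by $\epsilon_n$. So the central existence statement of your proof is unproved, and it is not plausible that it can be obtained by a bare volume argument.

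It is worth noting that the paper (following Serre and Robinson) avoids this rounding problem altogether by exploiting structure rather than pigeonhole: $E$ is approximated by \emph{calibrated} sets, i.e.\ spectra of periodic Jacobi matrices, for which $E_n=\tilde{P}^{-1}([-2,2])$ with $\tilde P$ an explicit polynomial; the candidate polynomials are $T_m(\tilde P)$ (Chebyshev compositions), the Jacobi entries are perturbed to rational values, and integrality is then forced by the Fekete--Szeg\"o correction $\Gamma_a=(P_n)^c+\Delta_a$ together with Rouch\'e's theorem, which keeps all zeros inside the bands; convergence $\mu_{P_n}\to\mu_E$ then follows from the Bernstein--Walsh/upper-envelope argument and a diagonal extraction. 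If you want to salvage your direct approach, the missing ingredient is precisely a quantitative lemma guaranteeing, for $\C(E)>1$, a monic integer polynomial whose roots track a prescribed $\mu_E$-equidistributed configuration inside $E$ --- and proving such a lemma is essentially equivalent to the calibration machinery you are trying to bypass.
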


The main objective of this work is the following natural question from   the Fekete-Szeg\"o theorem, suggested by J.P. Serre in \cite{Se}.
\begin{ques}\label{s}
Let $K$ be a compact of $\CC$ stable under complex conjugation, of capacity $\C(K)\geq 1$ and $U$ an open set containing $K$. Is there a sequence of polynomials $P_n\in {\rm Pol}_U$ such that $\mu_{P_n}\to \mu_K$?
\end{ques}
In this paper we obtain the following result.
\begin{thm}
 Let $K$ be a compact of $\BC$, symmetric with respect to the real axis,
with $\C(K) \geq 1$. 
If $U $ is an open set containing $K$, there is a sequence $(P_n)$ of monic polynomials with integer coefficients whose roots are in $U$ and are such that the associated zeros counting measure $\mu_{P_n}$ converge weakly to the equilibrium measure $ \mu_K $ of $ K$.
\end{thm}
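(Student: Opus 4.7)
\bigskip

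\noindent\textbf{Proof sketch.}

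The plan is to upgrade Serre's theorem for real sets to the conjugation-symmetric case by combining a Chebyshev/Fekete construction of real-coefficient polynomials with a Chinburg--Minkowski calibration that converts real coefficients to integer ones, and then controlling zero locations via Hurwitz. The topological hypothesis on $\Omega\cap H$ enters at the outset: since $\Omega$ is symmetric under $z\mapsto\bar z$ and $\Omega\cap H$ is simply connected, the Riemann map of $\Omega\cap H$, extended by Schwarz reflection, yields a single-valued conformal model for $\Omega$. In this model the Green function $G_\Omega(\cdot,\infty)$ and the equilibrium measure $\mu_K$ admit the same explicit potential-theoretic description one exploits in Serre's real construction, so that construction can be transported to $K$.

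First I would choose monic real-symmetric polynomials $Q_n\in\RR[X]$ of degree $n$, built from Fekete or Leja points of $K$ (which, by symmetry of $K$, can be taken symmetric under complex conjugation). Standard potential theory then gives $\mu_{Q_n}\to\mu_K$ weakly and $\tfrac{1}{n}\log|Q_n(z)|\to \log\C(K)+G_\Omega(z,\infty)$ on compacts of $\Omega$; since $\C(K)>1$ and $G_\Omega(\cdot,\infty)>0$ off $K$, $|Q_n|$ is exponentially large everywhere outside a thin neighbourhood of $\partial K$. Next, fix an open set $U_0$ with $K\subset U_0\subset\overline{U_0}\subset U$ and apply Minkowski's convex body theorem to the lattice of (non-leading, real-symmetric) integer coefficient vectors and the body
\[
B_n \;=\; \Bigl\{(a_0,\dots,a_{n-1})\in\RR^n \;:\; \sup_{z\in K}\Bigl|\sum_{j=0}^{n-1} a_j z^j\Bigr|\le \eta_n\Bigr\}.
\]
Using $\C(K)>1$, as in Chinburg's proof of Fekete--Szeg\"o one bounds $\operatorname{vol}(B_n)$ from below; choosing $\eta_n\to 0$ subexponentially so that $\operatorname{vol}(B_n)\ge 2^n$ produces a nonzero integer polynomial $R_n$ with $\Vert R_n\Vert_K\le\eta_n$. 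A small adjustment of this argument yields a monic $P_n\in\ZZ[X]$ of degree $n$ with $\Vert P_n-Q_n\Vert_K\le\eta_n$.

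Finally, by Hurwitz's theorem applied on $\partial U_0$, the exponential separation $\min_{\partial U_0}|Q_n|\gg\eta_n$ forces every zero of $P_n$ to lie in $U_0\subset U$ for $n$ large. Weak convergence $\mu_{P_n}\to\mu_K$ follows from $\mu_{Q_n}\to\mu_K$ together with the fact that $\tfrac{1}{n}\log|P_n/Q_n|\to 0$ on compact subsets of $\Omega$, via the standard potential-theoretic continuity principle. The main obstacle will be the calibration middle step: one must choose $\eta_n$ small enough for Hurwitz to trap the zeros inside $U$, yet large enough for Minkowski to deliver an integer polynomial close to a \emph{monic} real polynomial with the correct zero-distribution. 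This is precisely where $\C(K)>1$ combined with the simply-connected conformal model of $\Omega\cap H$ supplies the essential quantitative slack.
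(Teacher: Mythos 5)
Your middle step --- the Minkowski/Chinburg calibration --- fails, and it fails for a structural reason, not a technical one. Chinburg's volume asymptotic, quoted in the paper as $\lim n^{-2}2\ln\psi_n(f_n(K))=-\ln\C(K)$, shows that the convex body of coefficient vectors $a$ with $\sup_K|\sum_j a_j z^j|\le \eta_n<1$ has volume that is \emph{super-exponentially small} precisely when $\C(K)>1$; the volume lower bound $\operatorname{vol}(B_n)\ge 2^n$ you invoke is available only in the regime $\C(K)<1$, which is the opposite of the present hypothesis. In fact no nonzero $R_n\in\ZZ[X]$ with $\Vert R_n\Vert_K\le\eta_n<1$ can exist at all: if $a_d\neq 0$ is its leading coefficient, then $\Vert R_n\Vert_K\ge |a_d|\,\C(K)^d\ge 1$ by the Chebyshev characterization of capacity (this is the same phenomenon the paper records via Ferguson: on a set of capacity $\ge 1$ the only functions approximable by integer polynomials are such polynomials themselves). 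Consequently the ``small adjustment'' producing a monic $P_n\in\ZZ[X]$ with $\Vert P_n-Q_n\Vert_K\le\eta_n$ is an inhomogeneous lattice-point problem in a body of vanishing volume and is unsupported; and even granting it, your Hurwitz step would need smallness of $P_n-Q_n$ on $\partial U_0$, not on $K$, which via Bernstein--Walsh costs another exponential factor you have not budgeted for.

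The paper uses $\C(K)>1$ in the opposite way, and this is the idea your sketch is missing. It first realizes $\partial\Omega$ (using the symmetry and the hypothesis that $\Omega\cap H$ is simply connected) as a limit of spectra of periodic Jacobi matrices; such a spectrum is a lemniscate $\tilde P^{-1}[-2,2]$ for the Na\u{\i}man polynomial, and after perturbing the Jacobi entries one may take the coefficients in $\QQ[i]$. Chebyshev-type polynomials $P_n=\beta^{-n}T_n(\tilde P)$ of these lemniscates have zero-counting measures converging to the equilibrium measure. Integrality is then obtained by the Fekete--Szeg\"o power trick: one passes to a high power $(P_n)^c$ and corrects finitely many top coefficients by terms of the form $\lambda_j^{(i)}z^{K-j}(P_n)^{c-a-i}$, and because $|P_n|=R_2>1$ on a lemniscate surrounding the spectrum (this is exactly where $\C(K)>1$ enters), the total correction is dominated by $(P_n)^c$ there, so Rouch\'e traps all zeros of the resulting integer-coefficient polynomial near those of $(P_n)^c$, giving both the localization in $U$ and the convergence of the counting measures. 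If you want to salvage your outline, replace the Minkowski step by this power-and-correct mechanism (or an equivalent use of a lemniscate on which your $Q_n$ exceeds $1$ in modulus); as written, the proposal does not prove the theorem.
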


{\bf Organization of the paper:} Very succinctly, we give the definitions in the first section, then we introduce the counting measures in the second section. Minkowski's theorem will be discussed in section three. The fourth section recall some facts in the case of a compact of $\RR$. The fifth section
 is devoted to certain related results. The sixth and seventh sections are devoted to the introduction of certain Riemann surfaces, in relation to certain Jacobi matrices and the solution of the Serre question.

\section{Definitions}\label{defs}

\subsection{Approximation on intervals}
If $\mu$ is a finite Borel measure on $\CC$ with compact support, its logarithm potential is the function  $\Phi_\mu:\CC\rightarrow (-\infty,+\infty ]$  defined by
$$\Phi_\mu(z)=\int \ln(|z-w|^{-1})\, d\mu(w)\, .$$
This integral converges if $z\not\in \su(d\mu)$, and since $d\mu$ has compact support, $\ln(|z-w|^{-1})$ is uniformly bounded below for $(z,w)\in \su(d\mu) \times \su(d\mu)$, so the integral for each $z\in \su(\mu)$ either converges or diverges to $+\infty$, in which case we set $\Phi_\mu(z)=+\infty$.

Potentials enter naturally in studying growth of polynomials as $n\to \infty$. For if
\[P_n(x)=\prod_{j=1}^n(x-x_j^{(n)})\]
then 
\[\frac{1}{n}\ln|P_n(x)|=-\Phi_{\mu_{P_n}}(x)\]
where
$$\mu_{P_n}=\frac{1}{n}\sum_{j=1}^n\delta_{x_j^{(n)}}$$
is the counting measure for the zeros $x_j^{(n)}$, also called the normalized density of zeros. The function $\Phi_\mu(z)$ is bounded below on $\su(\mu)$, so
\[I(\mu)=\int \Phi_\mu(z)\, d\mu(z)=\int\int_{K\times K} \ln(|z-w|^{-1}) d\mu(z)d\mu(w)\]
is either finite or diverges to $+\infty$. $I(\mu)$ is called the potential energy of $\mu$ or, for short, the energy of $\mu$.

Consider a compact $K\subset \CC$. We consider all probability measures ${\mathcal M}_{+,1}(K)$ on $K$. We say $K$ has capacity zero if and only if $I(\mu)=\infty$ for all $\mu\in {\mathcal M}_{+,1}(K)$. We set \[\displaystyle v(K)=\inf_{\mu}I(\mu)\] where $\mu$ runs over all positive probability measures supported in $K$. Then the capacity of $K$ is defined as $\C(K)=e^{-v(K)}$. Its logarithm $v(K)$ is called the logarithm capacity of $K$.

For a compact $K$ with non-zero capacity there exists a unique positive probability measure $\mu$, such that $I(\mu)=\ln \C(K)^{-1}$. This measure $\mu=\mu_K$ is called the equilibrium measure of $K$.

Before continuing further, we wish to recall some definitions that  
will be need.

Let us also recall that if $(\mu_n)_n$ and  $\mu_\infty$ are probability measures on a compact Hausdorff space $X$,  $(\mu_n)_n$ converges weakly  to  $ \mu_\infty$ if
$$\int f d\mu_n \rightarrow \int fd\mu \quad \quad \text{as }  n\to \infty$$
for every $f\in \C^0(X)$, function continuous on $X$.

Denote by $\M(K)=\C^0(K)^*$ the set of all measure on $K$. The $\sigma(\M(K),\C^0(K))$-topology is the weakest topology on $\M(K)$ in which the maps $x\mapsto \<y,x\>$ of $\M(K)$ to $\CC$ are continuous for all $y\in \C^0(K)$. By the Banach-Alaoglu theorem, the unit ball in $\M(K)$ is compact in the $\sigma(\M(K),\C^0(K))$-topology. $\M_{+,1}(K)$ is closed in the unit ball, so it is compact too.

 Moreover if $f$ a real valued function defined on a topological space $E$,
$f$ is lower semi continuous at $a$ if 
$$f(a)=\liminf_{x\to a} f(x)\, .$$ We have the following property of the potential energy
$$\mu \longrightarrow I(\mu)$$ is weakly lower semi continuous. The lower semi continuity means 
\[\mu_n\to \mu \Rightarrow \liminf I(\mu_n)\geq I(\mu)\]
 equivalently 
 \begin{enumerate}
 \item $I^{-1}((-\infty,a])$ is closed for all $a$\\
 \item   $I^{-1}((a, \infty])$ is open for all $a$. 
 \end{enumerate}

 Finally, given a bounded set ${\mathcal E}$ in the complex plane, 
we denote by
${\mathcal E}(r)$ the $r-$neighborhood of ${\mathcal E}$.
\begin{defn}\label{H}
 If
 ${\mathcal E}_1$ and  ${\mathcal E}_2$ are two bounded sets in ${\BC}$, 
the difference between ${\mathcal E}_1$ and  ${\mathcal E}_2$ is the 
smallest $r$
such that ${\mathcal E}_1(r)$ contains ${\mathcal E}_2$ and 
${\mathcal E}_2(r)$ contains ${\mathcal E}_1$. It is the Hausdorff distance between ${\mathcal E}_1$ and ${ \mathcal E}_2$.
\end{defn}
We will denote this difference by 
$\delta({\mathcal E}_1, {\mathcal E}_2)$, it is small if and only if 
${\mathcal E}_1$ and  ${\mathcal E}_2$ are (almost) super imposable. We will say that 
${\mathcal E}_1$ is near  ${\mathcal E}_2$ (and reciprocally).
If ${\mathcal E}$ 
is a compact set
and $({\mathcal E}_{\nu})$ a family of sets such that 
$\delta({\mathcal E}, {\mathcal E}_{\nu})$ tends to zero as ${\nu}$ tends to
$\infty$ we will say simply that $({\mathcal E}_{\nu})$ tends to 
${\mathcal E}$.

We then deduce the lemma
\begin{lemma} \label{delta}

If $\forall\varepsilon > 0$, there exists $n_0$ such that for all $n \ge n_0$,
\(
K_n \subset K(\varepsilon).
\), and $\C(K_n)\to \C(K)$ then $\mu_{K_n}\to \mu_K$. This is in particular the case when
  $\delta(K_n,K)\to 0$ and $\C(K_n)\to \C(K)$.
\end{lemma}
\begin{proof}

If $\forall\varepsilon > 0$, there exists $n_0$ such that for all $n \ge n_0$,
\(
K_n \subset K(\varepsilon),
\)
in particular, $\mu_{K_n}$ and $\mu_K$ have their supports in a compact set,
then there exists $\eta$, a weak limit point of $\mu_{K_n}$, $\mu_{K_{n_j}}\to \mu_K$.
By the preceding property, $\operatorname{supp} \eta\subset K$.
In effect, let $K' \subset \CC\backslash K$, a compact set. Let $C\subset K'$ a compact such that there exists $f$ be a continuous function such that
\[
0 \le f \le 1, \qquad f \equiv 1 \text{ on } C, \qquad \operatorname{supp} f \subset K' .
\]
Let $\varepsilon > 0$ such that
\[
K' \subset \CC \backslash K(\varepsilon).
\]
Then for $n \ge n_0$, we have $K' \subset \CC\backslash K_n$ and
\[
\forall j, n_j\geq n_0, \int f \, d\mu_{K_{n_j}} = 0 \;\;\Rightarrow\;\; \int f \, d\mu = 0.
\]
As
\[
\mu(C) \le \int f \, d\mu = 0,
\]
we conclude that $\mu(C)=0$, 
$C \subset \CC \setminus \operatorname{supp} \mu$ and $K' \subset \CC \setminus \operatorname{supp} \mu$.
Hence $\operatorname{supp} \mu \subset K$.
We repeat the arguments of \cite{S-T} Appendix and \cite{Simon}. See also \cite{Simon} Thm 4.5.7.

%
%
%
%
%
 By lower semi continuity of the energy $I$,
\begin{eqnarray*}
I(\eta)&\leq &\liminf I(\mu_{K_{n_j}})\\ 
& =& \lim \ln(\C(K_{n_j})^{-1})\\
&=&\ln(\C(K)^{-1}),
\end{eqnarray*}
so $\eta=\mu_K$, that is, $\mu_{K_{n_j}}\to \mu_K$.
\end{proof}

Note that the converse does not hold: one can have $\mu_{P_n} \to \mu_K$ weakly without $C(\{\text{zeros of }P_n\}) \to C(K)$. Indeed, since each zero set $\{\text{zeros of }P_n\}$ is finite, it has logarithmic capacity zero, even though the associated normalized counting measures can converge weakly to $\mu_K$.

\section{Algebraic integers with all conjugates in a given compact.}
For a set $V\subset\CC$  let $\Po_V$ be the set of  monic polynomials in $\ZZ[X]$ of degree at least 1 such that all their roots lie in $V$. If $z$ is an algebraic integer, a root of a polynomial 
$\displaystyle P(X)\in \Po_V $, then all the conjugate of $z$ are in $V$ and we  say that $z$ is totally in $V$.

 Let $\Ir_V$ be the set of irreducible monic polynomials in $\ZZ[X]$ of degree at least 1 such that all their roots lie in $V$. for such a polynomial $P(X)$ of degree $g$ let $\mu_P$ be the corresponding probability measure supported in its roots, $\displaystyle \mu_P=\frac{1}{g}\sum_{i=1}^g \delta_{x_i}$. Now let $K\subset \CC$ be compact. There are two quite different cases  \cite{Se},\,\cite{Rumely}, depending on the capacity of $K$.
\begin{enumerate}
\item If  $\C(K)<1$, then $\Ir_K$ is finite.
\item If $K\subset \CC$ is ${\rm Gal}(\bar{\QQ}/\QQ)$-stable and $\C(K)\geq 1$, then for any open $U, K \subset U$, the set $\Ir_U$ is infinite. 
\end{enumerate}
In particular if $E\subset \RR$ is a union of finite number of segments and $\C(E)>1$, then $\Ir_E$ is infinite.


\subsection{Precisions on the properties of $K$}
Let $U\subset \CC$ be a set and let $U^*$ be   the so called ``symmetric kernel"  of $U$ consisting of those points of $U$ which belong, together with their conjugates to $U$.  So $U^*$ is symmetric with respect to the real axis and naturally $\C(U^*)\leq \C(U)$.

M.Fekete \cite{Fekete},  \cite{Chinburg} proved that if $K$ is a compact of $\CC$ such that $\C(K^*)<1$, then there is only a finite number of  irreducible algebraic equations with integer coefficients of the form
$$z^n+a_1 z^{n-1}+...+a_{n-1}z+a_n=0$$ whose roots lie all in $K^*$.

\begin{thm}[Minkowski’s theorem]
Suppose $K$ to be a symmetric, convex, bounded subset $\BR^d$. If ${\rm vol}(K) > 2^d$, then $K$ contains at least one lattice point other than $0$.
\end{thm}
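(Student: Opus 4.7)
The plan is to reduce to Blichfeldt's covering lemma: any Lebesgue-measurable set $S \subset \BR^d$ of volume strictly greater than $1$ contains two distinct points whose difference lies in $\BZ^d$. Once this lemma is in hand, I apply it to the rescaled body $S := \tfrac12 K$, whose volume is ${\rm vol}(K)/2^d > 1$, and then combine the resulting pair of points with the symmetry and convexity of $K$ to produce a nonzero lattice point.

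For Blichfeldt's lemma I would argue as follows. Let $\pi:\BR^d \to \BR^d/\BZ^d$ be projection onto the unit torus, which carries total Lebesgue measure $1$. Decompose $S = \bigsqcup_{n \in \BZ^d} S_n$ with $S_n := S \cap (n + [0,1)^d)$, and translate each piece into the fundamental cube via $S_n \mapsto S_n - n$. Since $\sum_n {\rm vol}(S_n - n) = {\rm vol}(S) > 1 = {\rm vol}([0,1)^d)$, the translated pieces cannot be pairwise disjoint. Picking a point $\bar z$ lying in $(S_n - n)\cap (S_m - m)$ for some $n \ne m$ yields $x := \bar z + n \in S$ and $y := \bar z + m \in S$ with $x \ne y$ and $x - y = n - m \in \BZ^d\setminus\{0\}$.

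Finally, take $S := \tfrac12 K$, which is measurable because $K$ is bounded and convex. The hypothesis ${\rm vol}(K) > 2^d$ gives ${\rm vol}(S) > 1$, and Blichfeldt produces distinct $x/2,\,y/2 \in \tfrac12 K$ with $\tfrac{x-y}{2} \in \BZ^d\setminus\{0\}$. Symmetry of $K$ gives $-y \in K$, and convexity of $K$ then gives the midpoint $\tfrac{x+(-y)}{2} = \tfrac{x-y}{2}$ in $K$. This midpoint is the required nonzero lattice point.

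The argument becomes a two-line deduction once Blichfeldt is available, so the only conceptually substantive step is the measure-theoretic covering step above; no topology or delicate analysis enters. The strict inequality ${\rm vol}(K) > 2^d$ is used exactly to supply the strict inequality ${\rm vol}(S) > 1$ needed by Blichfeldt; a routine dilation argument applied to $(1+\varepsilon)K$ recovers an analogous statement when ${\rm vol}(K) = 2^d$ and $K$ is closed, but such a refinement is not needed for the form stated here.
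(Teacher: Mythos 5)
Your proof is correct. Note, however, that the paper itself does not prove this statement at all: Minkowski's theorem is merely recalled there as a classical tool, with a citation to Cassels, because what matters for the Fekete--Szeg\H{o} argument is only its application to the convex body $f_n(K)$ of polynomial coefficient vectors. Your argument is the standard one and is complete: Blichfeldt's lemma via the pigeonhole principle on the translates $S_n - n$ inside the fundamental cube (pairwise disjointness would force $\sum_n \mathrm{vol}(S_n - n) \le 1$, contradicting $\mathrm{vol}(S) > 1$), applied to $S = \tfrac12 K$, followed by the observation that for $x, y \in K$ the point $\tfrac{x - y}{2} = \tfrac{x + (-y)}{2}$ lies in $K$ by central symmetry and convexity. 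The measurability remark (bounded convex sets are Lebesgue measurable) and the correct use of the strict inequality $\mathrm{vol}(K) > 2^d$ are both in order, and your closing comment on the closed equality case is a standard refinement not needed here. In short, you have supplied a proof where the paper only supplies a reference, and it is the proof one would expect to find in that reference.
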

There is an extension to general lattices $\displaystyle \Lambda= \BZ u_1\oplus \cdots \oplus \BZ u_d$, where $\{ u_1, \cdots, u_d\}$ is a basis of $\BR^d$. We define ${\rm vol}(\Lambda) $ as the volume of the
parallelotope
\[\left\{  \sum_{i=1}^d  \alpha_i u_i,\quad 0\leq \alpha_i\leq 1 \right\}
\]

\begin{thm}[Minkowski’s theorem for general lattices]
Suppose $\Lambda$ to be a lattice and $K$ to be a bounded symmetric convex subset in $\BR^d$. If  ${\rm vol}(K) > 2^d\, {\rm det}\Lambda$, then $K$ contains at least a point of $\Lambda$ different from $0$.
\end{thm}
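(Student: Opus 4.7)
The plan is to reduce the general lattice case directly to the previously stated Minkowski theorem for $\BZ^d$ via a linear change of coordinates. I would introduce the unique linear automorphism $T:\BR^d\to\BR^d$ determined by $T(e_i)=u_i$ for $i=1,\dots,d$, where $e_1,\dots,e_d$ is the standard basis. Since $\{u_1,\dots,u_d\}$ is a basis of $\BR^d$, $T$ is invertible; moreover, by the very definition of ${\rm det}\,\Lambda$ as the volume of the fundamental parallelotope $\{\sum_{i=1}^d \alpha_i u_i:0\leq\alpha_i\leq 1\}$, one has $|{\rm det}\,T|={\rm det}\,\Lambda$. Crucially, $T$ restricts to a bijection $\BZ^d\to\Lambda$.

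Next, I would set $K':=T^{-1}(K)$. Because $T^{-1}$ is a linear isomorphism, $K'$ inherits from $K$ the three required properties: it is bounded (as the continuous image of a bounded set), centrally symmetric (since $T^{-1}(-x)=-T^{-1}(x)$), and convex (since linear maps preserve convex combinations). The change-of-variables formula for Lebesgue measure then gives
\[{\rm vol}(K')=\frac{{\rm vol}(K)}{|{\rm det}\,T|}>\frac{2^d\,{\rm det}\,\Lambda}{{\rm det}\,\Lambda}=2^d.\]
Applying the previously stated Minkowski theorem to $K'$ and the standard lattice $\BZ^d$ produces some $m\in\BZ^d\setminus\{0\}$ with $m\in K'$, and setting $\lambda:=T(m)$ yields an element of $\Lambda\cap K$ different from $0$ (by injectivity of $T$), which is the desired conclusion.

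There is no substantial obstacle: the argument is a mechanical reduction, and the only content of the general-lattice statement beyond the standard one is the elementary observation that every lattice is the image of $\BZ^d$ under an invertible linear map whose determinant has absolute value equal to the covolume ${\rm det}\,\Lambda$. The only minor points meriting care are the preservation of boundedness, central symmetry, and convexity under $T$, together with the correct scaling of Lebesgue measure by the Jacobian factor $|{\rm det}\,T|$ --- all standard facts from linear algebra and integration.
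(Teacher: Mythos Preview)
Your argument is correct and is the standard reduction: pull back $K$ through the linear isomorphism sending $\BZ^d$ to $\Lambda$, track the volume via the Jacobian, and apply the $\BZ^d$ case. There is no gap.

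As for comparison with the paper: the paper does not actually supply a proof of this statement. Both Minkowski's theorem for $\BZ^d$ and its general-lattice version are merely quoted as classical results (with a reference to Cassels), and the paper proceeds directly to use them. So your proposal is not competing with an alternative argument in the paper --- it simply fills in a proof that the paper omits.
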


We must perhaps insist that Minkowski's theorem as well as Motzkin's theorem on simultaneous approximation (which in turn depends on two theorems of Kronecker. The first one \cite[p.159]{Mot}) is at the heart of  the diophantine  approximation  and then at the heart of the approximation by polynomials with integer coefficients. The second one \cite{Kronecker}
state that if an algebraic integer $\alpha$ and all of its conjugates are  in the closed unit disk ${\mathbb D} := \{z \in \BC : |z|\leq  1\}$, then it is either $\alpha= 0$ or it is root of unity.
 This is apparent at \cite[Chap III]{Cassel}, Feruguson \cite[Theorem 1.1]{ferg} and Chinburg \cite{Chinburg}. For the sake of completeness and in order to see how the different idea articulate
we give an idea of the proof of the first part of Fekete-Szeg\"o theorem. 

We detail  an application of a Chinburg's theorem. See also \cite{a}, p.24.
 For $\displaystyle  a\in \BR^{n+1}$ we define the polynomial $ f_a(z)= a_0+\cdots a_nz^n $. Let $K$ be a compact
such that $\C(K)<1 $, then (see \cite{Chinburg}) we consider
$$f_n(K) = \left\{a = (a_0, a_1, \ldots, a_n) \in \mathbb{R}^{n+1}\setminus\{0\} \;:\; \max_{z\in K}|f_a(z)| < 1\right\}.$$
Let ${\psi}_n$ the euclidien measure on $\BR^{n+1}$, theorem 1.2 in \cite{Chinburg} says
\begin{equation}\label{chin}
\lim n^{-2}2\ln\psi_n(f_n(K))=-\ln\C(K)\, .
\end{equation}


We note that, setting $\widetilde{f}_a(z)=\frac{f_a(z)}{a_n}$, we have $K\subset \widetilde{f}_a^{-1}(D(0,\frac{1}{|a_n|}))$ and we deduce that $\C(K)\leq \frac{1}{|a_n|^{1/n}}$, that is
$$|a_n|\leq \frac{1}{\C(K)^n}.$$
Then in the case $\C(K)>1$, $\lim a_n=0$ and $\forall \epsilon>0,\exists N, \forall n\geq N$, $f_n(K)\subset \BR^n\times[-\epsilon,\epsilon]$ and $\lim\psi_n(f_n(K))=0$, we can compare with \ref{chin}.

In the case $\C(K)<1$, \ref{chin} allows to show Fekete's theorem. For this we apply 
Minkowski's theorem: let $C$ open symmetric convex set of $\BR^{n+1}$, if $\psi_n(C)>2^{n+1}$, then $C$ contains a point with integer coordinates, different from $0$.

Here $f_a(K)\subset \BR^{n+1}$, as $f_{-a}(z)=f_a(z)$, $f_n(K)$ is the symmetric.

As  $f_{ta+(1-t)b})=tf_a+(1-t)f_b$, for $t\in[0,1]$, $f_n(K)$ is a convex set.

Let's verify that $f_n(K)$ is an open bounded set.

Let $a\in f_n(K)$, as $K$ is a compact set and $f_a$ continue, $m_a=\max_{z\in K}|f_a(z)|<1$. Let $\rho$ be the radius of the smallest disc contained in $K$ and centered at $0$. Let $M=\sum_{k=0}^n \rho^k$ and $\epsilon\in\BR^{n+1}$ such that $||\epsilon||=(\sum_0^n|\epsilon_i|^2)^{1/2}$, $||\epsilon||<\frac{1-m_a}{M}$. Then, with $f_{a+b}=f_a+f_b$, we deduce that $D(a,\epsilon)\subset f_n(K)$ and $f_n(K)$ is open.

We can add that $f_n(K)$ is bounded obtaining different estimates on $|a_n|$. Several methods allow to justify that, for $0\leq k\leq n$, $|a_k|\leq (n+1)!\rho^{\phi(n)}\C(K)^{-\frac{n(n+1)}{2}}$, where $\phi(n)$ is a function depending on $n$ only.

Consider $n+1$ points of $K$, $z_0,z_1,...,z_n$ such that
$|\Pi_{0\leq i<j\leq n}(z_j-z_i)|=\sup_{x_k\in K}|\Pi_{0\leq i<j\leq n}(x_j-x_i)|:=V_{n+1}$; such a set is called, Fekete's set. With the preceding notations, we denote by $w_i=f_a(z_i)$. Then, we have a linear system in the $a_i$:
$$\left\{\begin{array}{ccc}a_0+a_1z_0+...+a_nz_0^n&=&w_0\\...&=&...\\a_0+a_1z_n+...+a_nz_n^n&=&w_n\end{array}\right.$$

Let's denote the determinant of the system , $\left|\begin{array}{cccc}1&z_0&...&z_0^n\\
.& .&...&.\\1&z_n&...&z_n^n\end{array}\right|$ by $\det V=V(z_0,...,z_n)=\Pi_{0\leq i<j\leq n}(z_j-z_i)$, so that $|V(z_0,...,z_n)|=V_{n+1}$.

We have, for example,
$$a_0=\left|\begin{array}{cccc}w_0&z_0&...&z_0^n\\
.& .&...&.\\w_n&z_n&...&z_n^n\end{array}\right|/ V(z_0,...,z_n)  \, .$$ 
More generaly,  $(V^{-1})_{ij}=\displaystyle\frac{\widetilde{V}_{ij}(V)}{V(z_0,...,z_n)}$, where $\widetilde{V}_{ij}(V)$ is the "classical mineur" $(-1)^{i+j}|V_{i}^j|$.

If $V_{i}^j=(\alpha_{kl})_{1\leq k,l\leq n}$ then $|V_{i}^j|=\sum_{\sigma\in S_n}\epsilon(\sigma)\alpha_{1\sigma(1)}...\alpha_{n\sigma(n)}$ and $|\alpha_{l\sigma(l)}|\leq \rho^{\sigma(l)}$.

Then $\left||V_i^j|\right|\leq n!\rho^{\sum_{k\not=j}k}$ and as $a_i=\frac{1}{\det V}\sum_{l=0}^n\widetilde{V}_{ij}(V)w_l$, we have $|a_i|\leq \displaystyle\frac{n!}{\det V}\sum_{j=0}^n\rho^{\frac{n(n+1)}{2}-j}$.

Denoting $d_{n+1}=V_{n+1}^\frac{2}{n(n+1)}$, $(d_n)$ tends  decreasing to $\C(K)$, then
$\det V=V_{n+1}\geq \C(K)^{\frac{2}{n(n+1)}}$ and 
$\forall i, 0\leq i\leq  n$, $|a_i|\leq n!\rho^{\frac{n(n+1)}{2}}\displaystyle\frac{\rho^{n+1}-1}{ \rho^{n+1}-\rho  }\C(K)^{-\frac{n(n+1)}{2}}
$.

\bigskip

Another inequality can be obtained noting that
\begin{eqnarray*}
& &V(z_0,...,z_{i-1},X,z_{i+1},...,z_n)=\sum_{l=0}^n\widetilde{V}_{il}(V)X^l\\
&=&\Pi_{0\leq l<k<i}(z_k-z_l)\Pi_{l=0}^{i-1}(X-z_l)\Pi_{k=i+1}^n(z_k-X)\Pi_{i<l<k\leq n}(z_k-z_l)
\Pi_{l=0}^{i-1}(X-z_l)\Pi_{k=i+1}^n(X-z_k)\\
&=&X^n-s_1
X^{n-1}+...+(-1)^{(n-j)}s_{n-j} X^j+...+(-1)^ns_n\, .
\end{eqnarray*}
We have
$s_{n-j}=\sum_{1\leq i_1<i_2<...<i_{n-j}\leq k} z_{i_1}z_{i_2}...z_{i_{n-j}}$. As $\forall l, |z_l|\leq \rho \quad |s_{n-j}|\leq(\binom{n}{j}) \rho^{n-j}$ and $|\widetilde{V}_{ij}(V)|\leq V_n (\binom{n}{j}) \rho^{n-j}$.

Finally $|\frac{\widetilde{V}_{ij}(V)}{\det(V)}|\leq \frac{V_n}{V_{n+1}} (\binom{n}{j}) \rho^{n-j}$

In conclusion $f_n(K)$
 is an open symmetric convex set, with large volume for a large $n$. By Minkowski's theorem, $f_n(K)$ contains some $a= (a_0,a_1,\cdots,a_n)\in \BZ^{n+1}\setminus{0}\}$. We fix a such $a$ and we consider the open set
\[U= \{ z\in \BC, \mid f_a(z)\mid<1 \}.
\]
If $\alpha$ and its conjugates are contained in $U$, then $f_a(\alpha)$ and its conjugates are contained in  the unit disk $\mathbb D$. By the second Kronecker theorem  $\alpha$ is one of the many roots of the polynomial $f_a(z)$.
\begin{rem}
The idea of using Minkowski's convex body theorem in this context goes back to Hilbert \cite{Hilbert}. We can rephrase, in a classical way \cite{Cassel}, what we said on the proof of the first part of Fekete-Szeg\"o theorem: Any convex body of volume  at least $2^n$ contains at leasr $2^n+1$ integral points. In particular the system of linear inequalities
\[
\bigl\vert \sum_{k=1}^n a_{k,m}x^k \bigr\vert \leq b_m,\quad 1\leq m\leq n\]
with
\[{\rm det} \left(a_{k,m}\right)_{1\leq k,m\leq n}\neq 0,\quad \prod_{m=1}^n b_m \geq \bigl\vert {\rm det} \left(a_{k,m}\right)_{1\leq k,m\leq n}\  \bigr\vert
\]
has a nonzero integral solution.
\end{rem}

Now let $K$ be a compact of $\CC$ of capacity $\C(K)\geq 1$. Let $U$ a neighborhood of $K$ then $U^*$ is a neighborhood of $K^*$ (with the convention that the empty set is the neighborhood of the empty set). If $\C(K^*)<1$ then by continuity of the capacity, there exists $V$ a neighborhood of $K$ such that $\C(V^*)<1$. From what precedes, there exists only a finite number of algebraic integers with all its conjugate in $V$. Then to show that there exists an infinity of algebraic integers totally in $U$, we can assume $\C(K^*)\geq 1$ and finally the hypothesis of $K$ symmetric with respect to the real axis is natural.

\section{On a compact symmetric with respect to the real axis}
\subsection{The boundary}
First of all, $K$ being a metric compact set, many assumptions can be made on $\partial K$.
\begin{defn}
We say that $K$ has a continuous boundary when $K$ is a non empty union of connected components non reduced to a point.
\end{defn}
Recovering $K$ with a finite number of small enough  closed balls, we can assume that the boundary $\partial K$ is continuous. 

From \cite[Proposition p. 18 ]{BG} we can suppose $K$ with regular boundary of class $C^\infty$. In fact let $U$ be an open set containing  $K$, there exists a $C^\infty$ function $\phi$ in $\RR^2$ such that  
\begin{enumerate}
\item $\phi=1$ on $K$,
\item $\text{supp }(\phi)\subset U$.
\end{enumerate}
Then $K\subset \text{supp}(\phi)\subset U$ with $\text{supp}(\phi)$ a compact  set with regular $C^\infty$ boundary. 

Let us  recall that a  analytic Jordan curve is a closed curve $\Gamma$ in $\CC$ which possesses a neighborhood $V$ and a conformal map $\xi$ from $V$ on $\{\alpha<|z|<\beta\}$, such that the image of $\Gamma$ by $\xi$ is the circle $\{|z|=r\}$, $\alpha<r<\beta$. We can also suppose that $\partial K$ is a set of analytic Jordan curves. To see this, a first method is to use \cite[Theorem G]{fs} . Let $K(\rho)$ be the $\rho$-neighborhood of $K$ ($\rho>0$), there exists $\rho$ sufficiently small so that $K(\rho)\subset U$. From the preceding theorem, there exists a domain defined by a lemniscate containing $K$ and contained in $K(\rho)$: for $n\geq n_1(\rho)$, $\{z, |w_n(z)|\leq \nu_n\}$ where 
\[w_n(z)=\prod_{k=1}^n (z-\zeta_k^{(n)}) (z-\overline{\zeta_k^{(n)}}),\quad \nu_n=\max_K|w_n(z)|.\] In conclusion this compact set is invariant under complex conjugation and its boundary consists in analytic Jordan curves.

A second method can be found in \cite[p.144]{as}. As an open set of $\CC$, $U$ is a natural Riemann open surface, there exists a sequence of regular subregions $(U_n)$, such that $\overline{U_n}\subset U_{n+1}$ and $\displaystyle U=\bigcup_{i=1}^\infty U_n$.

We recall that $\Omega$ is regularly imbedded if $\Omega$ and its exterior have a common boundary which is a 1-dimensional submanifold. A regularly imbedded subregion of a Riemann surface is thus bounded by analytic curves, then $\forall n$ $\partial U_n$ consists in analytic Jordan curves.

$K$ being a compact in $U$, it can be recovered by a finite number of $U_n$. Then there exists $N$ such that $K\subset U_N$. Then $K\subset \overline{U_N}\subset U_{N+1}\subset U$ and $\overline{U_N}$ is a compact with analytic boundary.

Finally we recall the following definition (see for example \cite[p.22]{BG}),
\begin{defn}
Let $\Omega$ be an open subset of $\RR^2$. We say that $\Omega$ has a regular boundary of class $C^k$ ($k\geq 1$)
if for every $p\in\partial \Omega$ there is a neighborhood $U_p$ of $p$ and a diffeomorphism $\phi_p$ of class $C^k$ from $U_p$ onto a neighborhood $V_p$ of $0$ in $\RR^2$ such that $\phi_p(p)=0$, \[\phi_p(U_p\cap \overline{\Omega})=V_p\cap \{(x,y)\in\RR: x\leq 0\}\] and the Jacobian determinant $J(\phi_p)$ is $>0$ in $U_p$.
\end{defn}
We have  (see for example \cite[Proposition p. 27]{BG}) that for $\Omega$ a relatively compact, open subset of $\CC$ with piecewise regular boundary (of class $C^k$, $k\geq 1$), there is only a finite number of connected components of $\partial\Omega$ and each of them is a Jordan curve (piecewise $C^k$).

Taking $\OO_n=\{z, d(z, K)<\frac{1}{n}\}$, a corresponding ${\phi}_n$, we have $K=\cap \OO_n=\cap  \text{supp}({\phi}_n)$ and $(\mu_{\text{supp}({\phi}_n)})_n$ converges weakly  to  $ \mu_K$. If for $U $, an open set containing ${\text{supp}({\phi}_n)}$, there is a sequence $(P_{n,m})_m$ of monic polynomials with integer coefficients whose roots are in $U$ and are such that the associated zeros counting measure $\mu_{P_{n,m}}$ converge weakly to the equilibrium measure $ \mu_ {\text{supp}({\phi}_n)}$, then by the diagonal process we have the same result for $U $, an open set containing $ K$.

In conclusion we can always suppose that $K$ is a compact with $C^{\infty}$ boundary, then the number of connected components of $\partial K$ is finite.

In the following the regularity of $K$ is understood. Note first that which is important in $K$ is the boundary of the outer component of $\CC\backslash \partial K$. Then $K$ can be a compact whose boundaries of the bounded connected components are what ever you want.

Denoting by $\Omega$ the outer component of $\CC\backslash \partial K$, we know that the equilibrium measure of $K$ is supported on $\partial \Omega$ (see for example, \cite{Ts}, \cite{Simon} Theorem A.10). We denote by $\partial \Omega=(\Gamma_1,...,\Gamma_r)=\Gamma$.

For the sake of completeness, we recall the following results.
\subsection{ Hyperelliptic Riemann surface associated to $\Gamma$.}

Let $\Omega$ be a plane domain. We have  seen that we may assume that each boundary component of the boundary of $\Omega$, denoted by $\partial \Omega=\Gamma=(\Gamma_1, \Gamma_2,...,\Gamma_r)$, $\Gamma_j$ is a smooth analytic curve. Alternatively, one may think of $\Omega$ as a plane bordered Riemann surface.
More precisely
\begin{defn}
For each $r=1,2,...$ we shall denote by $\U_r$ the class of plane domains whose boundary consists of $r$ disjoint Jordan curves $\Gamma_1, \Gamma_2,...,\Gamma_r$ which satisfy the following smoothness condition: with each $\Gamma_j$ there is associated a function $z_j(t)$ analytic and univalent in a neighborhood of $\Gamma_j$ which maps this neighborhood onto the circular ring $1-\delta <|z|<1+\delta$ and the curve $\Gamma_j$ onto the circle $|z|=1$.
\end{defn}

 To be complete, we recall the following results.
\begin{defn}
A closed Riemann surface of genus $g$ is hyperelliptic if it admits an analytic involution with precisely $2g+2$ fixed points. Such an analytic involution is called a sheet interchange. All of the Weierstrass points on a hyperelliptic surface are located at the fixed points of the sheet interchange.
\end{defn} 
We have the  theorem (\cite{ba} Theorem 3 p.13).
\begin{thm}\label{t3}
On a hyperelliptic Riemann surface
\begin{enumerate}
\item there is only one sheet interchange, and
\item any two meromorphic functions of order two are related by a fractional linear transformation.
\end{enumerate}
\end{thm}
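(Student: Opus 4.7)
The plan is to prove (2) first by a bidegree/genus argument in $\mathbb{P}^1 \times \mathbb{P}^1$ and then to derive (1) as an immediate consequence. Throughout I assume the genus $g \geq 2$, which is the regime in which the definition of hyperelliptic via $2g+2$ fixed points picks out a nontrivial class (for $g=0,1$ the corresponding uniqueness assertion is false).

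To prove (2), let $u,v$ be two meromorphic functions of order $2$ on $X$, regarded as degree-$2$ holomorphic maps $X \to \mathbb{P}^1$. I will study the product map $\Phi = (u,v) : X \to \mathbb{P}^1 \times \mathbb{P}^1$ with image curve $C$. First suppose $\Phi$ is birational onto $C$. Then the two projections of $C$ inherit the degrees of $u$ and $v$, namely $2$ each, so $C$ has bidegree $(2,2)$. Adjunction on $\mathbb{P}^1 \times \mathbb{P}^1$ then gives arithmetic genus $(2-1)(2-1)=1$, hence geometric genus of $C$ at most $1$; but this geometric genus equals the genus of the normalization $X$, namely $g \geq 2$, a contradiction. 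Thus $\Phi$ is not birational, so there exist generically distinct $P \neq Q$ in $X$ with $u(P)=u(Q)$ and $v(P)=v(Q)$. Since $u$ has degree $2$, the only other preimage of $u(P)$ under $u$ is $\sigma_u(P)$, where $\sigma_u$ denotes the sheet interchange of the double cover $u$; hence $Q = \sigma_u(P)$ and $v \circ \sigma_u = v$. Therefore $v$ descends to a rational function $h$ on $X/\langle \sigma_u \rangle \cong \mathbb{P}^1$ with $v = h \circ u$, and comparing degrees forces $\deg h = 1$, so $h$ is a fractional linear transformation.

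For (1), given two sheet interchanges $\sigma_1, \sigma_2$ on $X$, the quotient maps $u_i : X \to X/\langle \sigma_i \rangle$ are each meromorphic functions of order $2$, since the quotient has genus $0$ by Riemann--Hurwitz applied to the $2g+2$ fixed points. Part (2) yields a M\"obius transformation $h$ with $u_2 = h \circ u_1$, so the fibres of $u_1$ and $u_2$ coincide; since each $\sigma_i$ is characterised as the unique nontrivial permutation within each generic fibre of $u_i$, we obtain $\sigma_1 = \sigma_2$. The main obstacle I anticipate is the careful handling of the bidegree/genus comparison for $C$ in the birational case---one must verify that the two projections of $C$ genuinely have degree $2$ and that the geometric genus of $C$ equals $g$ rather than being smaller due to unforeseen ramification---after which the remainder of the argument is essentially formal.
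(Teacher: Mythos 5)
Your argument is correct, and it is worth noting that the paper itself supplies no proof of this statement: it is quoted from Baker's thesis (\cite{ba}, Theorem 3, p.~13), so your write-up is an independent proof rather than a variant of one in the text. Your route is also genuinely different from the classical one. The standard arguments (as in Baker's setting or in Farkas--Kra) establish part (2) by proving uniqueness of the degree-two linear series via Riemann--Roch/Clifford-type counting, or by factoring the canonical map through the double cover, and then deduce (1); you instead pass to the image curve $C$ of $(u,v)$ in $\mathbb{P}^1\times\mathbb{P}^1$, rule out birationality by the bidegree-$(2,2)$ adjunction bound $p_a=1$ against the genus $g\geq 2$ of the normalization, and conclude that $v$ is invariant under the deck involution of $u$, hence $v=h\circ u$ with $\deg h=1$; part (1) then follows formally by comparing fibres of the two quotient maps. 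What your approach buys is brevity and self-containedness given basic intersection theory on $\mathbb{P}^1\times\mathbb{P}^1$ and the fact that the normalization of $C$ is $X$; the classical route stays entirely inside Riemann surface theory, which is closer to the toolkit the paper otherwise uses. Two small points you flag correctly and should keep explicit: the restriction $g\geq 2$ is genuinely needed (for $g=0,1$ both assertions fail, e.g.\ the translated involutions $z\mapsto -z+c$ on a torus all have four fixed points), and it is harmless here since the doubles considered in the paper (domains in $\U_r$, $r\geq 3$) have genus $r-1\geq 2$; and in the non-birational case the relation $v\circ\sigma_u=v$ holds a priori only off a finite set, so one should invoke the identity theorem to get it everywhere, and note that $\deg\Phi$ divides $\deg u=2$ so the generic fibre of $\Phi$ has exactly two points.
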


We will also occasionally need the following theorem (\cite{ba} Theorem 7 p.19).
\begin{thm}
Let $\Omega$ be a domain in the class $\U_r$, $r\geq 3$. Then the following statements are equivalent:
\begin{enumerate}
\item the double of $\Omega$ is a hyperelliptic Riemann surface,
\item the domain $\Omega$ can be mapped one-to-one conformally onto the exterior of a system of slits taken from the real axis,
\item the domain $\Omega$ admits an anticonformal involution possessing precisely $2r$ fixed points on the boundary of $\Omega$.
\end{enumerate}
\end{thm}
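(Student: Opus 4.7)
The plan is to prove the cyclic chain of implications by systematically exploiting the natural anticonformal involution $J:\hat\Omega\to\hat\Omega$ of the double $\hat\Omega$, whose fixed-point set is $\partial\Omega$ and which swaps the two copies of $\Omega$ glued along $\Gamma$. The genus of $\hat\Omega$ equals $r-1$, so $2g+2=2r$, a numerical coincidence that drives the whole argument.

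\textbf{The easy directions $(2)\Rightarrow(1)$ and $(2)\Rightarrow(3)$.} If $\Omega=\CC\setminus\bigcup_{i=1}^r[a_i,b_i]$ with all slits on $\RR$, the double has a concrete model as the smooth projective curve $y^2=\prod_{i=1}^r(z-a_i)(z-b_i)$ of genus $r-1$, whose sheet interchange $(z,y)\mapsto(z,-y)$ has precisely $2r$ fixed points at the branch points; this gives (1). For (3), the restriction of $z\mapsto\bar z$ to $\Omega$ is antiholomorphic of order two and fixes exactly the $2r$ slit endpoints on $\partial\Omega$.

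\textbf{The equivalence $(1)\Leftrightarrow(3)$.} Assume (1) with hyperelliptic involution $\tau$. Since $g\geq 2$, Theorem \ref{t3} gives uniqueness of $\tau$, hence $J\tau J^{-1}=\tau$, i.e.\ $J$ and $\tau$ commute. The composition $\sigma:=J\circ\tau$ is then an antiholomorphic involution of $\hat\Omega$. An interior fixed point $p$ of $\tau$ would come paired with $J(p)\neq p$, and a local computation together with the constraint $r\geq 3$ (so $2g+2\geq 6$) forces all $2r$ fixed points of $\tau$ to lie on the real locus $\partial\Omega$ of $J$; there $\sigma=\tau$, so $\sigma|_{\Omega}$ is the required antiholomorphic involution with $2r$ boundary fixed points. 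For the converse, extend $\sigma$ from $\Omega$ to $\hat\Omega$ by conjugating with $J$ on the other sheet; then $\tau:=\sigma\circ J$ is holomorphic, involutive, and fixes the $2r$ common fixed points of $\sigma$ and $J$ on $\partial\Omega$. Riemann--Hurwitz then yields $\hat\Omega/\tau\simeq\BP^1$ and identifies $\tau$ as the hyperelliptic involution, giving (1).

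\textbf{The main implication $(1)\Rightarrow(2)$.} Let $\pi:\hat\Omega\to\hat\Omega/\tau\simeq\BP^1$ be the degree-two quotient. The commutation of $\tau$ and $J$ means $\pi\circ J$ factors through $\pi$, yielding a unique antiholomorphic involution $\phi:\BP^1\to\BP^1$ with $\pi\circ J=\phi\circ\pi$. Since $\phi$ fixes the $2r\geq 6$ images of the fixed points of $\tau$, its fixed circle is well defined; after a Möbius normalization on $\BP^1$ we may assume $\phi(w)=\bar w$, so that $\pi(\partial\Omega)\subset\RR$. Each of the $r$ boundary components $\Gamma_i$ then covers two-to-one a closed segment $[a_i,b_i]\subset\RR$ whose endpoints are the two branch values on $\Gamma_i$. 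Finally, since for $r\geq 3$ no non-trivial conformal involution of $\Omega$ can have $2r$ boundary fixed points (again a Riemann--Hurwitz/Euler-characteristic count applied on $\overline{\Omega}$), $\tau$ must swap the two copies of $\Omega$ in $\hat\Omega$, so $\pi|_\Omega$ is injective. Its image is exactly $\BP^1\setminus\bigcup_{i=1}^r[a_i,b_i]$, proving (2).

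\textbf{Main obstacle.} The delicate step is showing that $\tau$ swaps the two copies of $\Omega$ rather than preserving each of them. This same fact is what pins the $2r$ fixed points of $\tau$ to $\partial\Omega$ in $(1)\Rightarrow(3)$ and what makes $\pi|_\Omega$ injective onto the slit exterior in $(1)\Rightarrow(2)$. It depends critically on $r\geq 3$: one must rule out, via a genus/Euler-characteristic count, the possibility that $\tau$ restricts to a non-trivial conformal involution of $\Omega$ with fixed points on $\partial\Omega$, a phenomenon that does occur for $r\leq 2$.
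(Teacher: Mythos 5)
First, a remark on the comparison itself: the paper does not prove this statement at all — it is quoted as Theorem 7 of Baker's thesis (\cite{ba}), so there is no internal proof to measure your argument against, and it must be judged on its own terms.

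Your strategy (play the doubling antiholomorphic involution $J$, whose fixed locus is $\partial\Omega$, against the hyperelliptic involution $\tau$, unique because $g=r-1\geq 2$) is the standard and correct one, and $(2)\Rightarrow(1)$, $(2)\Rightarrow(3)$ and $(3)\Rightarrow(1)$ are essentially complete. The genuine gap is exactly at the step you flag as the main obstacle: the claim that $\tau$ interchanges the two copies of $\Omega$, equivalently that all $2r$ fixed points of $\tau$ lie on $\partial\Omega=\mathrm{Fix}(J)$. In $(1)\Rightarrow(3)$ you dismiss it with ``a local computation together with the constraint $r\geq 3$''; this is not an argument — the statement is global, and locally nothing forbids a Weierstrass point $p\in\Omega^+$ paired with $J(p)\in\Omega^-$. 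In $(1)\Rightarrow(2)$ you instead invoke ``no non-trivial conformal involution of $\Omega$ can have $2r$ boundary fixed points'', but this excludes the wrong configuration: a conformal (orientation-preserving) involution of $\overline{\Omega}$ preserving $\Omega$ and fixing even one boundary point would fix a boundary circle pointwise, hence be the identity, so that case is vacuous. The dangerous case is $\tau$ preserving each half with all $2r$ fixed points interior, $r$ in each copy, and it must be excluded explicitly: Riemann--Hurwitz would give $\chi\bigl(\overline{\Omega}/\tau\bigr)=\bigl(\chi(\overline{\Omega})+r\bigr)/2=\bigl((2-r)+r\bigr)/2=1$, so the quotient is a disk, while $\tau$ permutes the $r$ boundary curves and acts freely on any invariant one, so the quotient has at least $\lceil r/2\rceil\geq 2$ boundary curves — a contradiction for $r\geq 3$. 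Once the swap is established, the fixed points of $\tau$ are automatically on $\partial\Omega$ and your quotient-by-$\tau$ argument for $(1)\Rightarrow(2)$ goes through; note, however, that ``each $\Gamma_i$ carries exactly two branch points'' also needs its own (easy) count: $\tau$ reverses the orientation of every invariant boundary oval (it exchanges its two sides), hence fixes exactly two points on it, and $2r$ fixed points then force all $r$ ovals to be invariant with two apiece. As written, the proofs of $(1)\Rightarrow(3)$ and $(1)\Rightarrow(2)$ assume this key location-of-fixed-points fact rather than proving it.
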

At first, notably to put on the notations, we recall some results on the double of the exterior of a system of slits taken from the real axis.
Let $E=\bigcup_{j=1}^r E_j, \quad E_j=[e_{2j-1}, e_{2j}]\subset \RR$, $e_1<e_2<e_3<...<e_{2r-1}<e_{2r}$.

We let $q(z)$ be the polynomial $q(z)=\prod_{i=1}^{2r}(z-e_i)$. Such a polynomial will be called the structure polynomial of $\CC\backslash E$. Consider the subset of $\hat{\CC}\times \hat{\CC}$ given by
$$W=\{(z,w), w^2=q(z)\}\, .$$
where we add two points at infinity $\infty_+$ and $\infty_-$, characterized by the fact that $\dfrac{w}{z^{r-1}}=1$ at $\infty_+$ and $-1$ at $\infty_-$.\\
It's a two-sheeted branched covering space of the sphere $P_1(\CC)$, branched at the $2r$ points $e_j$. It's a topological covering space of $\CC\backslash E$.\\
The covering map $\pi_E:W \longrightarrow P_1(\CC)$, $\pi:(z,w)\longrightarrow z$ is a meromorphic function of order two on 
$W$ whose only multiple points, each of multiplicity two, are located at the points $(e_j, q(e_j))$, $j=1, ...,2r$. Then $W$ is a hyperelliptic surface whose sheet interchange $T: W\longrightarrow W$ is given by $T: (z,w)\longrightarrow (z,-w)$.

Finally we exhibit $W$ as the double of the domain $\Omega$ (\cite{ba}).

\begin{thm}
The double of $\CC\backslash E$, 
 is conformally equivalent to the Riemann surface $W$:  
$$w^2-q(z)=0\, .$$
\end{thm}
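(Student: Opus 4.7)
My strategy is to construct an explicit biholomorphism $\Phi\colon \hat\Omega \to W$, where $\hat\Omega$ denotes the Schottky double of $\Omega := \hat\CC \setminus E$. The key preliminary observation is that every closed loop in $\Omega$ encloses an even number of the zeros $e_1,\ldots,e_{2r}$ of $q$: any such loop is homologous to an integer combination of small loops around the intervals $E_j$, and each such loop contributes exactly the pair $e_{2j-1},e_{2j}$. Consequently $q$ admits a single-valued holomorphic branch $h\colon \Omega \to \CC^*$ with $h^2 = q$, normalized by $h(z)/z^r \to 1$ as $z \to \infty$. Using $h$, I define two holomorphic embeddings $\iota_\pm\colon \Omega \to W$ by $\iota_\pm(z) = (z,\pm h(z))$ with $\iota_\pm(\infty) = \infty_\pm$, and their disjoint images cover $W \setminus \pi_E^{-1}(E)$.

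Next I would view the double as $\hat\Omega = \Omega_+ \sqcup \Omega_- /\!\!\sim$, with the two copies $\Omega_\pm$ glued along the common boundary $E$ by the identity, the complex structure near $E$ being provided by Schwarz reflection (legitimate since $E$ is real-analytic and $\Omega$ is symmetric under complex conjugation). Set $\Phi = \iota_+$ on $\Omega_+$ and $\Phi = \iota_-$ on $\Omega_-$. The main work is to verify that $\Phi$ is continuous and holomorphic across the seam $E$. For $x$ in the interior of some $E_j$, $q(x) < 0$ and the boundary values are $h(x+i0) = \varepsilon_j\, i\sqrt{|q(x)|}$ and $h(x-i0) = -\varepsilon_j\, i\sqrt{|q(x)|}$ for some sign $\varepsilon_j \in \{\pm 1\}$ dictated by the monodromy of $\sqrt{q}$. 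Since the Schottky identification glues the upper edge of $E_j$ in $\Omega_+$ to the lower edge in $\Omega_-$ (and conversely), these limits match, giving a continuous $\Phi$. Near each endpoint $e_j$, a local chart on $\hat\Omega$ obtained by Schwarz reflection coincides, up to biholomorphism, with the hyperelliptic local parameter $\sqrt{z - e_j}$ on $W$ at the branch point $(e_j,0)$; hence $\Phi$ is holomorphic there as well.

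Being a holomorphic map of topological degree one between compact Riemann surfaces of the same genus $r-1$, $\Phi$ is then a biholomorphism. The main obstacle lies in the compatibility check of the previous paragraph: one must track the monodromy of $\sqrt{q}$ along paths crossing successive gaps of $E$ and verify that the Schottky doubling chart at each branch point agrees with the hyperelliptic local parameter $\sqrt{z-e_j}$. Once these sign-bookkeeping and local-coordinate computations are carried out, the remainder of the argument is formal. A more conceptual alternative would be to observe that the antiholomorphic involution $\sigma\colon W \to W$, $\sigma(z,w) = (\bar z,-\bar w)$, has fixed point set exactly $\pi_E^{-1}(E)$ and that $W/\sigma$ is canonically identified with $\Omega$ viewed as a bordered Riemann surface, after which the uniqueness of the Schottky double yields the result.
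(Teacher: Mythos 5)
Your proposal is correct, and its core idea --- matching the two sheets $W^{+}=\{(z,\sqrt{q(z)})\}$ and $W^{-}=\{(z,-\sqrt{q(z)})\}$ over $D=\CC\setminus E$ with the two halves of the double, glued along $\pi_E^{-1}(E)$ --- is the same as in the paper; the difference lies in how the double of the slit domain is realized. The paper never glues two copies of $D$ directly: it chooses a univalent map $f$ of $D$ onto a smooth domain $D'\in\U_r$, takes $W'$ to be the double of $D'$ with its canonical anticonformal involution $U'$, and defines the conformal homeomorphism $W\to W'$ by $(z,w)\mapsto f(z)$ on $W^{+}$ and $(z,w)\mapsto U'(f(\bar z))$ on $W^{-}$, the two expressions agreeing on $W^{\circ}$ because there $z\in E\subset\RR$ and $f(z)\in\partial D'$ is fixed by $U'$; this buys freedom from any local analysis at the slit endpoints, at the price of taking the existence of $f$, the double of $D'$, and the holomorphy of the matched map across the seam as background facts. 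You instead construct the double directly by gluing two copies of $\Omega$ along $E$ with Schwarz reflection and check chart compatibility at the branch points via the parameter $\sqrt{z-e_j}$; this is more self-contained, and the sign bookkeeping you flag is genuinely where the work sits, but it goes through: your gluing convention (upper edge of $E_j$ in $\Omega_+$ to lower edge in $\Omega_-$, both copies with the standard structure) is the correct one precisely because $E\subset\RR$, so the mirror copy is identified with $\Omega$ via $z\mapsto\bar z$, and then the boundary matching $-h(x-i0)=h(x+i0)$ holds since $h$ changes sign across each cut. Your closing alternative --- the antiholomorphic involution $(z,w)\mapsto(\bar z,-\bar w)$ has fixed set exactly $\pi_E^{-1}(E)$, and uniqueness of the double then identifies $W$ with the double of $\Omega$ --- is also valid and arguably the cleanest argument, though it is not the paper's route. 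Minor points: your normalization $w\sim z^{r}$ at $\infty_\pm$ is the right one (the paper's $w/z^{r-1}$ appears to be a slip), $h$ has a pole at $\infty$ so it is not $\CC^*$-valued there, and since your $\Phi$ is bijective by construction you do not need the degree/genus count to conclude biholomorphy.
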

\begin{proof}
Let denote by  $D=\CC\backslash E$. First observe that $D$ admits an analytic square root of its structure polynomial $\sqrt{q(z)}$. We want to define $\sqrt{q}$ as an analytic function on $D$, the branch with 
$$\sqrt{q(x)}>0 \quad \text{if } x>e_{2r}\, .$$
This implies
\begin{eqnarray}
\sqrt{q(x)}<0&  & (e_{2r-2}, e_{2r-1})\cup (e_{2r-6}, e_{2r-5})\cup ...\\
\sqrt{q(x)}>0&  & (e_{2r-4}, e_{2r-3})\cup (e_{2r-8}, e_{2r-7})\cup ...
\end{eqnarray}
$$ (-1)^r\sqrt{q(x)}>0 \quad \text{if } x<e_{1}\, $$
and ($\sqrt{q(x+i0)}$ means $\lim_{\epsilon\downarrow 0}\sqrt{q(x+i\epsilon)}$)
\begin{eqnarray}
(-i)\sqrt{q(x+i0)}>0&  & (e_{2r-1}, e_{2r})\cup (e_{2r-5}, e_{2r-6})\cup ...\\
i \sqrt{q(x+i0)}>0&  & (e_{2r-3}, e_{2r-4})\cup (e_{2r-7}, e_{2r-6})\cup ...
\end{eqnarray}

So that it is possible to write $W$ as the disjoint union $W=W^{°}\cup W^+\cup W^-$ where 
\begin{eqnarray*}
W°&=&\{(z,w)\in W, z\in\partial D\}\\
W^+&=&\{(z,w)\in W,w=\sqrt{q(z)},  z\in D\}\\
W^-&=&\{(z,w)\in W, w=-\sqrt{q(z)}, z\in D\}.
\end{eqnarray*}
Additionally the meromorphic function $\pi$ on $W$ defined by $\pi:(z,w)\rightarrow z$ effects a univalent mapping of both $W^+$ and $W^-$ onto $D$, and if $(z,w)\in W°$, then $\pi(z,w)\in\partial D$ and so in particular is real.

To obtain the double of $D$ we let $f$ be an analytic and univalent map of $D$ onto a domain $D'\in \U_n$, and we let $W'$ be the double of $D'$.  If $U'$ is the canonical anticonformal involution on $W'$ obtained by the doubling process, we have then the conformal homeomorphism of $W$ onto $W'$ given by
$$(z,w)\rightarrow \left\{\begin{array}{cc} f(z), & (z,w)\in W^+\\
                                                             f(z)=U'(f(\bar{z}), & (z,w)\in W\\
                                                              U'(f(\bar{z}), & (z,w)\in W^-
\end{array}\right.$$ 
and the proof is complete.
\end{proof}
It is easy to check that the sheet interchange corresponds to $T=UV=VU$, where $U=f^{-1}\circ U' \circ f$ and $V: D\rightarrow D, z\mapsto \bar{z}$.

We will also denote $W$ by $\widehat{\CC\backslash E}$.


Let us denote by $E_j^*$ the reciprocal image of $E_j$ in $\widehat{\CC\backslash E}$, $\displaystyle E^*=\bigcup_{j=1}^r E_j^*$  and $D_j^*$ the reciprocal image of $[e_{2j}, e_{2j+1}]$ for $j=1,...,r-1$. Then 
 $D_1^*,...,D_{r-1}^*, E_1^*,...,E_{r-1}^*$ form an homology basis of $\widehat{\CC\backslash E}$.

The adherence of $W^+$ is $W^+\cup W$ and will denote $E^*_+=W$ its border. We will define in the same manner $E^*_-$.

\subsection{The notion of  ``calibrated"}\label{cali}

In this section we refer in particular to \cite{Ro1}, \cite{edin}, \cite{Se} and the references within, the appellation calibrated corresponds to ``Pell-Abel type" in \cite{Se}. 

First, we recall the following terminology. The meromorphic 1-forms on a compact Riemann surface are called abelian differentials. The abelian differentials which are holomorphic will be called of the first kind; while the meromorphic abelian differentials with zero residues will be called of the second kind. Finally, a general abelian differential (which may have residues) will be called of the third kind. 

\smallskip

We denote by $w_{\infty_+,\infty_-}$ the differential of the third kind having a simple pole at $\infty_+$ and $\infty_-$,  with residue $-1$ and $+1$ respectively, normalized
by   
$$ \int_{e_{2j}}^{e_{2j+1}} w_{\infty_+,\infty_-}=0,\, \,j = 1, \cdots, r-1.$$

We recall now the link between $w_{\infty_+,\infty_-}$ and the Green function of $\CC \backslash E$ with pole at $\infty_+$, $g(z)$ (see for example \cite{Wi}, \cite{FS1}, and the references within):
\begin{enumerate}
\item $g(z)$ is harmonic in $\CC \backslash E $,\\
\item $g(z)-\ln|z|$ is harmonic in a neighborhood of $\infty^+$\\
\item $\lim_{z\to\zeta}g(z)=0$ for all $z\in E$.
\end{enumerate}
Consequently, $g(z)=\Re G(z)$ where $G$ is a holomorphic function uniquely determined up to the addition of a purely imaginary constant. A function $\widetilde{g}$, such that $\widetilde{g}(z)= \Im G(z)$ is called a harmonic conjugate of $g$. 
So $G$ is the multiple-valued function obtained by adding to $g$ its conjugate:
$$G(z)=\int_{\gamma_z}2\frac{\partial g}{\partial w}(w)\, dw=g(z)+i\widetilde{g}(z)\, , \text{ where } \gamma_z \quad \text{ path from a fixed point } z_1 \text{ to } z.$$

$$g(z)=\Re G(z)=\dfrac{G(z)+\overline{G(z)}}{2}\, .$$
You can find an expression of $G$ in terms of theta function in \cite{FS1}.

The derivative of this multi-valued function is clearly single-valued. Hence, $G'(z)$ is a holomorphic function in $\CC \backslash E $.\\

The functions $g$ and $G$ extend to all of $\widehat{\CC \backslash E }$ by reflecting across $E$, since g vanishes on $E$ and $\Re(G(z))=0$ on $E$: $g(U(z))=-g(z)$ and 
 $G(U(z))=-\overline{G(z)}$ for $z\in \CC \backslash E $ and $U$ the canonical anti-conformal involution of $\widehat{\CC \backslash E }$. In particular $dG=w_{\infty_+,\infty_-}$. From the theory of 1-differential form on $W$, $w_{\infty_+,\infty_-}$ is of the form
$$ w_{\infty_+,\infty_-}=\frac{\lambda^{r-1}+\sum_{k=0}^{r-2}c_k \lambda^k} {\sqrt{q(\lambda)}}\,.$$

 It is an abelian differential with poles at $\infty_+$ and $\infty_-$ and zeros at the $\lambda_j$  (the zeros of $G'(z)$) and $\lambda_j^*$ ( the points of the copy of $\CC \backslash E $ corresponding to the $\lambda_j\in \CC \backslash E $).
It is clear from the behavior of $g(x)$ on the real axis that there is exactly on such $\lambda_j$ in the $(e_{2j},e_{2j+1})$.

$$w_{\infty_+,\infty_-}= \displaystyle \frac{\prod_{j=1}^{r-1}(\lambda-\lambda_j)}{\sqrt{q(\lambda)}}\, d\lambda=i\, dp\, ,$$
where $dp$ is called the quasimomentum in \cite{Krichever}.
\begin{defn}
A compact set $E=[e_1,e_2]\cup [e_3,e_4]\cup \cdots \cup [e_{2r-1},e_{2r}]$ of
the real line is $N$-calibrated for some $N\in \NN,\, N\geq r$,  if the complex Green's function $G(z)$ 
 of $\widehat{{\BC}\setminus E}$ with pole at infinity 
satisfies the conditions 
\begin{equation} \label{one}
\int_{e_{2k}}^{e_{2k+1}} \frac{R(t)}{\sqrt{q(t)}} dt= 0,~~ \text{ i.e. } 
\int_{D^*_k}\frac{R(t)}{\sqrt{q(t)}} dt= 0,~~ 
 k=1, 2,\cdots r-1
\end{equation}
and
\begin{equation}\label{two}
\int_{e_{2k-1}}^{e_{2k}} \frac{R(t)}{\sqrt{q(t)}}  dt=\pm \frac{n_k \pi i}{N},~~ \text{ i.e. } 
\int_{E^*_k}\frac{R(t)}{\sqrt{q(t)}} dt=\pm \frac{2 n_k \pi i}{N},
\quad 
k=1, 2,\cdots r,
\end{equation}
where 
$$R(t)= (t-\lambda_1)(t-\lambda_2)\cdots(t-\lambda_{r-1}) $$
and $n_1,\cdots n_r, N \in {\BN},~~n_1+\cdots +n_r= N~.$
\end{defn}
 \noindent We observe that if a compact $E$ is  calibrated  with 
respect to $n_1, n_2,\cdots n_r, N$ it is also  calibrated with respect to 
$k n_1, k n_2,\cdots ,k n_r, k N,~k\in{\BN}^*$.

We have the following results (see for example, \cite{Se}, \cite{edin})
\begin{prop}
There exists signs $\epsilon_j\in \{-1,+1\}$ such that
$$\sum_{j=1}^r \epsilon_j \int_{e_{2j-1}}^{e_{2j}} w_{\infty_+,\infty_-}=i\pi\, .$$
\end{prop}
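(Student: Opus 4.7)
The plan is to identify $w_{\infty_+,\infty_-} = dG$ with the differential of the multi-valued complex Green's function of $\CC\setminus E$, and to run a residue/contour deformation argument on the upper sheet of $\widehat{\CC\setminus E}$. First I would observe that since $E\subset\RR$ is symmetric under complex conjugation, $G$ satisfies the Schwarz-type reflection $G(\bar z)=\overline{G(z)}$. On each band $E_j=[e_{2j-1},e_{2j}]$ one has $\Re G = g = 0$, so the two boundary values $G_{\pm}(x)=\lim_{\varepsilon\downarrow 0} G(x\pm i\varepsilon)$ are purely imaginary and, by reflection, satisfy $G_{-}(x)=-G_{+}(x)$. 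In particular the two possible values of $\int_{e_{2j-1}}^{e_{2j}} w_{\infty_+,\infty_-}$, computed on the upper and on the lower edge of the cut, are opposites of each other.

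Next I would evaluate the period at infinity directly. Near $\infty_+$, one has $G(z)=\log z + O(1)$, so $dG=dz/z + O(|z|^{-2})\,dz$; integrating along a large counterclockwise circle $|z|=R$ lying on the upper sheet gives
$$\oint_{|z|=R} w_{\infty_+,\infty_-}\;\longrightarrow\;2\pi i \qquad \text{as } R\to\infty.$$
Because $w_{\infty_+,\infty_-}$ is holomorphic on the upper sheet outside the bands, the large loop may be contracted onto the union of small positively oriented loops $\partial V_j$ around each band $E_j$, giving
$$2\pi i \;=\; \sum_{j=1}^r \oint_{\partial V_j} w_{\infty_+,\infty_-}.$$

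Finally, each $\oint_{\partial V_j} w_{\infty_+,\infty_-}$ unfolds as the integral on the upper edge from $e_{2j-1}$ to $e_{2j}$ minus the integral on the lower edge traversed in the reverse direction. The reflection identity $G_{-}(x)=-G_{+}(x)$ makes both contributions equal, so
$$\oint_{\partial V_j} w_{\infty_+,\infty_-} \;=\; 2\int_{e_{2j-1}}^{e_{2j}} w_{\infty_+,\infty_-}\Big|_{+}.$$
Summing over $j$ yields $\sum_{j=1}^r \int_{e_{2j-1}}^{e_{2j}} w_{\infty_+,\infty_-}\big|_{+}=i\pi$, which is the claimed equality with $\epsilon_j=+1$ for every $j$; the signs $\epsilon_j\in\{-1,+1\}$ in the statement simply record the freedom to integrate along the upper or the lower edge of each band. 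The main delicate point is keeping track of orientation conventions on the hyperelliptic surface so that the residue $-1$ of $w_{\infty_+,\infty_-}$ at $\infty_+$ translates into $+2\pi i$ on the left of the contour identity above; once the reflection symmetry is in place, the residue-theorem computation is otherwise mechanical.
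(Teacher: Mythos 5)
Your argument is correct in substance, and it is essentially the standard proof: the paper itself states this proposition without proof, deferring to Serre and Sebbar--Falliero, so your residue computation supplies a self-contained justification of exactly the expected kind (the band periods of $dG$ are $\pm 2\pi i$ times the harmonic measures, which sum to $1$). Two small points of bookkeeping deserve attention. First, the reflection identity is cleanest at the level of the single-valued derivative: since $g=0$ on each band, the tangential derivative of $\Re G$ vanishes there, so $G'(x+i0)$ is purely imaginary and Schwarz reflection gives $G'(x-i0)=\overline{G'(x+i0)}=-G'(x+i0)$; phrasing it for the multivalued primitive $G$ itself (``$G_-=-G_+$'') requires fixing branches and is not needed. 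Second, with the counterclockwise circle $|z|=R$ contracted onto counterclockwise loops around the slits, each loop is traversed rightward on the lower edge and leftward on the upper edge, so one gets $\oint_{\partial V_j} w=-2\int_{e_{2j-1}}^{e_{2j}} w\big|_{+}$ and hence $\sum_j\int_{E_j}w\big|_{+}=-i\pi$, i.e.\ $\epsilon_j=-1$ for all $j$ (equivalently $+1$ on the lower edge); since the statement only asserts the existence of signs, this discrepancy is harmless, but you should not assert $\epsilon_j=+1$ for the upper edge without fixing the branch of $\sqrt{q}$ and the orientation. Note also that your argument never uses the gap normalization of $w_{\infty_+,\infty_-}$; it enters only through the identification $w_{\infty_+,\infty_-}=dG$, which the paper has already established, and the endpoint singularities of $w$ are of order $(t-e_i)^{-1/2}$, hence integrable, so collapsing the loops onto the slits is legitimate.
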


\begin{prop}
Let $E = [e_1,e_2]\cup\cdots\cup[e_{2r-1},e_{2r}]$ be an $N$-calibrated compact set. Define
$$f(z) = \cosh\!\left(N\int_{e_{2r}}^z \frac{R(t)}{\sqrt{q(t)}}\,dt\right).$$
Then $f$ is a polynomial of degree $N$. Moreover, if $A_N$ is chosen so that $T_N(z) = A_N f(z)$ is monic, then $T_N$ is the Chebyshev polynomial of degree $N$ of $E$.
\end{prop}
For the commodity of the lecture we will write the proof of the preceding proposition ( see for example \cite{Ro2}, \cite{Se}, \cite{desc}, \cite{edin}).
We use a result of
Ostrovskii, Pakovitch and  Zaidenberg \cite{OPZ}; we first recall some 
definitions. Let $D=D(a,r)$ be a closed disc centered at 
$a\in{\BC}$ and of radius r. We say that a compact $K\subset D$ 
supports D if D is the (unique) disc of smallest radius which 
contains $K$.
The classical inequality of Jung asserts 
that each compact convex $K$ of diameter $\delta$ is contained in a 
closed disc $D(a,\rho)$, with $\frac{\delta}{2}\leq\rho\leq\delta$. 
The following theorem is the main result in \cite{OPZ}

\begin{thm}\label{ostro}
Let $\Delta_r=\Delta(0,r)\subset {\mathbb C}$ be the disc of 
radius r centered at the origin, $K\subset \Delta_r$ be a supporting compact 
of $\Delta_r$, and $p\in{\BC}[z]$ be a monic polynomial of degree $n$. 
Then $p$ is the unique $n$-th polynomial 
of least deviation on $K_p=p^{-1}(K)$.
\end{thm}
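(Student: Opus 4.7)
The plan is to prove the theorem by a direct extremal argument that exploits the interaction between the supporting property of $K$ and the algebraic structure of the fibres of $p$. First I would establish that $\Vert p\Vert_{K_p}=r$: the identity $p(K_p)=K$ (since $p$ is surjective onto $\BC$) together with $K\subset\Delta_r$ gives $\Vert p\Vert_{K_p}\leq r$, while the supporting property forces $K\cap\partial\Delta_r$ to be nonempty, yielding equality. It therefore suffices to show that any monic $q\in\BC[z]$ of degree $n$ satisfying $\Vert q\Vert_{K_p}\leq r$ coincides with $p$.

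Set $h=p-q$, a polynomial of degree at most $n-1$. The classical characterization of the smallest enclosing disc (its center is a convex combination of points of the set lying on the boundary) supplies $w_1,\dots,w_m\in K\cap\partial\Delta_r$ and weights $\lambda_i>0$ with $\sum_i\lambda_i=1$ and $\sum_i\lambda_iw_i=0$; note that $m\geq 2$ since $r>0$. For each $i$, let $z_{i,1},\dots,z_{i,n}$ denote the $n$ preimages of $w_i$ under $p$, counted with multiplicity. They all lie in $K_p$, and the hypothesis $\lv q\rv\leq r$ combined with $\lv w_i\rv=r$ gives $\lv w_i-h(z_{i,j})\rv=\lv q(z_{i,j})\rv\leq r=\lv w_i\rv$, which upon squaring and expanding becomes
\[\Re\bigl(\overline{w_i}\,h(z_{i,j})\bigr)\geq\tfrac{1}{2}\lv h(z_{i,j})\rv^2\geq 0.\]

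The decisive observation is that $S_i:=\sum_{j=1}^n h(z_{i,j})$ is independent of $i$. Writing $h(z)=\sum_{k=0}^{n-1}c_k z^k$, each power sum $\sum_j z_{i,j}^k$ with $k\leq n-1$ is expressed by Newton's identities in terms of the elementary symmetric functions $e_1,\dots,e_k$ of the roots of $p(z)-w_i$; but these are, up to sign, the coefficients of $z^{n-1},\dots,z^{n-k}$ of $p(z)-w_i$, which coincide with the corresponding coefficients of $p(z)$ and hence do not depend on $w_i$. Denoting the common value by $S$ and summing the inequality above first over $j$, then over $i$ with weights $\lambda_i$, we obtain
\[0=\Re\bigl(\overline{\sum_i\lambda_iw_i}\,S\bigr)=\sum_i\lambda_i\,\Re\bigl(\overline{w_i}\,S_i\bigr)\geq\tfrac{1}{2}\sum_i\lambda_i\sum_j\lv h(z_{i,j})\rv^2\geq 0,\]
which forces $h(z_{i,j})=0$ for all $i,j$.

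To finish, I count distinct zeros. Since the total multiplicity excess of $p$ across all its fibres equals the degree $n-1$ of $p'$, the cardinalities of the reduced preimages satisfy $\sum_{i=1}^m\lv p^{-1}(w_i)\rv\geq mn-(n-1)\geq n+1$ when $m\geq 2$, and the sets $p^{-1}(w_i)$ are pairwise disjoint because the $w_i$'s are distinct. Thus $h$ vanishes at at least $n+1$ distinct points, and with $\deg h\leq n-1$ this forces $h\equiv 0$, whence $q=p$. The step I expect to require the most care is the identity $S_i=S$: one must verify that the parameter $w_i$ modifies only the bottom coefficient of $p(z)-w_i$, hence only the last elementary symmetric function $e_n$, and therefore remains invisible to the first $n-1$ Newton power sums. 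Once this is in place, the modulus inequality, the barycenter cancellation $\sum_i\lambda_iw_i=0$, and the counting of distinct preimages assemble into a proof of both minimality and uniqueness.
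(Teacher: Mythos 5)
Your argument is correct, and it is worth pointing out that the paper itself does not actually prove Theorem \ref{ostro}: the theorem is quoted as the main result of \cite{OPZ}, and the proof environment that follows it in the source is the proof of the \emph{preceding} proposition (that $\cosh\bigl(N\int_{e_{2r}}^z R/\sqrt{q}\bigr)$ is a polynomial, which then invokes Theorem \ref{ostro} to identify it with the Chebyshev polynomial of $E$). So where the paper defers to the literature, you supply a self-contained elementary proof, essentially reconstructing the Ostrovskii--Pakovitch--Zaidenberg argument. Its three pillars all check out: (i) $\Vert p\Vert_{K_p}=r$, since $p(p^{-1}(K))=K$ and minimality of $\Delta_r$ forces $K\cap\partial\Delta_r\neq\emptyset$; (ii) the barycentric relation $0=\sum_i\lambda_i w_i$ with $w_i\in K\cap\partial\Delta_r$, which is the standard characterization of the centre of the minimal enclosing disc, with $m\geq 2$ because $|w_i|=r>0$; (iii) the invariance of the fibre sums $S_i=\sum_j h(z_{i,j})$ for $\deg h\leq n-1$, which holds because $w$ enters $p(z)-w$ only through the constant term, hence only through $e_n$, so Newton's identities leave the power sums $p_1,\dots,p_{n-1}$ of each fibre unchanged (equivalently, one sees it from the residue at infinity of $h(z)\,p'(z)/(p(z)-w)$). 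Combining (ii) and (iii) with the pointwise inequality $\Re\bigl(\overline{w_i}\,h(z_{i,j})\bigr)\geq\tfrac12\lv h(z_{i,j})\rv^2$ kills $h$ on the union of the fibres, and your count $\sum_i \# p^{-1}(w_i)\geq mn-(n-1)\geq n+1$ (the total multiplicity defect over all fibres being $\deg p'=n-1$, and the fibres being disjoint) forces $h\equiv 0$, giving minimality and uniqueness simultaneously. Two implicit points deserve a word: the statement tacitly requires $r>0$ (for $r=0$ the set $K_p$ has at most $n$ points and uniqueness fails), and the convex-hull characterization of the minimal disc should be stated as a lemma or given its one-line perturbation proof. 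Neither affects correctness, and a merit of your route is that it nowhere needs $K$ or $K_p$ to be infinite, so it covers finite supporting sets as well.
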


\begin{proof}
Let $G(z)=\int_{e_{2r}}^z  \frac{R(t)}{\sqrt{q(t)}}\, dt $ because of (\ref{two}) $G$ is a multi valued function on $\widehat{\CC \backslash E }$, but as $\int_{\D_j^*}\frac{R(t)}{\sqrt{q(t)}}\, dt$ is in $2i\pi \ZZ$, then, by $2i\pi$ multiplicity of $\cosh$, $f$ is single valued in  $\widehat{\CC \backslash E }$, being meromorphic $f$ is of the form $\frac{R_1+\sqrt{q}}{R_2}$, where $R_1$ and $R_2$ are polynomial functions. Now it is easy to verify that $G(T(z))=G(z)$, where $T$ is the hyperelliptic involution,   then $f(T(z))=f(z)$, consequently $f$ is a rational function. 
Since $G(z)=\int_{e_{2r}}^z w_{\infty_+,\infty_-}$ the only poles of $G$ are 
$\infty_+,\infty_-$ and so it is also the case for $f$.
%
%

Hence $f$ is an entire function with a pole of order $r$ at $\infty$, then a polynomial of degree $r$.

As $f^{-1}([-1,1])=E$, we have the result.
\end{proof}

\subsection{A generalization of Serre's method, the case of a compact of $\RR$}

We have to notice that a compact $E$ of $\RR$ of capacity $\C(E)\geq 1$ is an example of symmetric domain with respect to the real axis. 

The following theorem 
 will be a consequence of  \cite[Theorem 1.6.2 ] {Se}.
\begin{thm}
Let $E$ be a compact of capacity $\C(E)\geq 1$, then for all neighborhood $U$ of $E$ in $\RR$, there exists a sequence of monic polynomials of degree $>0$, with coefficients in $\ZZ$, whose all roots are in $U$, $(P_n)$ such that $\lim \mu_{P_n}=\mu_E$.
\end{thm}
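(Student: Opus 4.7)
The plan is to reduce to Serre's Theorem by approximating the compact $E \subset \RR$ from outside by a decreasing sequence of finite unions of closed intervals whose capacities exceed $1$. Concretely, I would construct compacts $F_1 \supset F_2 \supset \cdots$ with $E \subset F_n \subset U$, each $F_n$ a finite disjoint union of closed intervals, and $\bigcap_n F_n = E$: cover $E$ by finitely many open intervals of length at most $1/n$ whose union has closure contained in $U$, take $F_n$ to be this closure, and intersect with $F_{n-1}$ to enforce nesting. Then $\delta(F_n, E) \to 0$, and outer continuity of logarithmic capacity on decreasing sequences of compacts gives $\C(F_n) \to \C(E)$.

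Next I would ensure that $\C(F_n) > 1$ for large $n$, so that Serre's Theorem applies. If $\C(E) > 1$ this is automatic. If $\C(E) = 1$, then $F_n \setminus E$ contains a true open interval (our $F_n$ is essentially a $1/n$-neighborhood of $E$), hence a non-polar set, and strict monotonicity of the logarithmic capacity under non-polar enlargement forces $\C(F_n) > \C(E) = 1$. Serre's Theorem then furnishes, for each such $n$, polynomials $Q_{n,k} \in \Po_{F_n}$ with $\mu_{Q_{n,k}} \to \mu_{F_n}$ as $k \to \infty$. In parallel, Lemma \ref{delta} applied with $K_n = F_n$ and $K = E$ yields $\mu_{F_n} \to \mu_E$. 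A diagonal extraction, legitimate because the weak topology on probability measures supported in a fixed compact neighborhood of $E$ is metrizable, then produces indices $k_n$ such that $P_n := Q_{n, k_n} \in \Po_{F_n} \subset \Po_U$ and $\mu_{P_n} \to \mu_E$.

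The main obstacle is precisely the boundary case $\C(E) = 1$, which requires strict monotonicity of logarithmic capacity under enlargement by a non-polar set; if that ingredient is awkward to invoke, one can sidestep it by first enlarging: fix $\varepsilon > 0$, choose a compact $E_\varepsilon \subset U$ with $E \subset E_\varepsilon$, $\delta(E, E_\varepsilon) < \varepsilon$ and $\C(E_\varepsilon) > 1$ (for instance by appending a short closed interval to one connected component of a neighborhood of $E$), run the argument above on $E_\varepsilon$, and let $\varepsilon \to 0$ via a further diagonal extraction combined with a second use of Lemma \ref{delta}. This circumvents the threshold case and only uses Serre's Theorem under its stated strict hypothesis.
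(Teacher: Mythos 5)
Your proposal is correct and shares the paper's overall skeleton: approximate $E$ from outside by finite unions of closed real intervals contained in $U$, produce monic integer polynomials on each approximant, pass the equilibrium measures to the limit, and finish with a diagonal extraction. The genuine difference is the black box applied to the approximating sets. You invoke Serre's Theorem 1.6.2 directly for an arbitrary finite union of segments of capacity $>1$ and get $\mu_{F_n}\to\mu_E$ from Lemma \ref{delta} together with outer continuity of the capacity, whereas the paper works one level deeper: it replaces the neighborhoods $\tilde E_n$ by spectra of periodic Jacobi matrices, i.e.\ calibrated sets (Theorem \ref{4.2}), and quotes the calibration (Pell--Abel) construction of \cite{edin}, \cite{Se} to obtain the polynomials $P_{k,E_n}$ with roots in $E_n$, the convergences $\mu_{E_n}\to\mu_E$ and $\C(E_n)\to\C(E)$ being built into Theorem \ref{4.2}. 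Your route is more economical and, more substantively, it treats explicitly the threshold case $\C(E)=1$, which the paper passes over in silence even though Theorem 1.6.2 as quoted requires strict inequality: your observation that $F_n\setminus E$ contains a nondegenerate interval, hence a non-polar set, and that for compacts of $\RR$ (whose complement is connected, so the set equals its polynomial hull) a non-polar enlargement strictly increases the capacity, is exactly the missing point; this strict monotonicity is indeed valid here (compare the two Green functions with pole at infinity and apply the minimum principle), but you should prove or cite it rather than merely invoke it. What the paper's longer route buys is coherence with the Jacobi-matrix machinery it develops afterwards for the planar case. A minor technical remark: your nesting device, intersecting the closure of a finite cover with $F_{n-1}$, may create degenerate components (isolated points), which would take you outside the hypotheses of Theorem 1.6.2; it is cleaner to take $F_n=\{x\in\RR:\dist(x,E)\le\epsilon_n\}$ with $\epsilon_n\downarrow 0$ small enough that $F_n\subset U$, which the Proposition recalled in the paper shows is automatically a nested sequence of finite unions of nondegenerate closed intervals.
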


Before proving it, for the convenience of the reader we recall some facts (see for example \cite{desc}, Chapter 5).
\begin{prop}
Let $E\subset \RR$ be compact. Let
$${\widetilde E}_n= \{x\in\RR,\; \dist(x,E)\leq \frac{1}{n}\}\, .$$
Then
\begin{enumerate}
\item $E\subset...\subset {\widetilde E}_{n+1}\subset {\widetilde E}_n \subset ...\subset \RR$ \, and\;
$\displaystyle \bigcap_n \widetilde{E}_n=E$,
\item Each ${\widetilde E}_n$ is a finite union of disjoint closed  intervals.
\end{enumerate}
\end{prop}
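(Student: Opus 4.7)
The plan is to handle the two parts separately, relying only on the facts that $E$ is compact (hence closed and bounded) and that the distance function $x\mapsto\dist(x,E)$ is $1$-Lipschitz on $\RR$.

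For part (1), note that $\tilde{E}_n = d^{-1}\!\bigl([0,1/n]\bigr)$ where $d(x)=\dist(x,E)$, so each $\tilde{E}_n$ is closed. The inclusion $E\subset\tilde{E}_n$ is immediate since $d$ vanishes on $E$, and the monotonicity $\tilde{E}_{n+1}\subset\tilde{E}_n$ follows from $1/(n+1)\le 1/n$. For the intersection, if $x\in\bigcap_n\tilde{E}_n$ then $d(x)\le 1/n$ for every $n$, hence $d(x)=0$; since $E$ is closed, this forces $x\in E$.

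For part (2), the heart of the argument is to show that each connected component of $\tilde{E}_n$ contains at least one point of $E$, and therefore has length at least $2/n$. Given $x\in\tilde{E}_n$, pick $y\in E$ with $|x-y|\le 1/n$ (such $y$ exists since $E$ is compact). For every $z$ on the closed segment $[x,y]$ one has $\dist(z,E)\le|z-y|\le 1/n$, so $[x,y]\subset\tilde{E}_n$. Hence $y$ lies in the same connected component $C$ as $x$, and moreover $[y-1/n,y+1/n]\subset\tilde{E}_n$ by the same argument applied to any point of that interval; connectedness then gives $[y-1/n,y+1/n]\subset C$, so the length of $C$ is at least $2/n$.

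Finally, since $E$ is bounded, $\tilde{E}_n$ is bounded, say $\tilde{E}_n\subset[-M-1,M+1]$ with $M=\sup_{x\in E}|x|$. The components of $\tilde{E}_n$ are pairwise disjoint closed intervals (closed subintervals of $\RR$ since they are connected and closed in a closed set), each of length at least $2/n$, all contained in a fixed bounded interval; so their number is at most $n(M+1)$, hence finite. This proves that $\tilde{E}_n$ is a finite disjoint union of closed intervals. The only mildly delicate point is the segment argument that makes each component meet $E$; everything else is bookkeeping.
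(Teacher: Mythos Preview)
Your proof is correct. Part (1) is handled the same way as in the paper (the paper simply says it is ``clear''). For part (2) you take a genuinely different route from the paper: the paper argues via the \emph{complement} of $E$, observing that $\RR\setminus E$ is a countable disjoint union of open intervals $\{J_k\}$ with $\sum_k |J_k|<\infty$ (since $E$ is bounded), so that only finitely many $J_k$ have length exceeding $2/n$; the remaining short gaps are absorbed into $\tilde E_n$, and hence $\RR\setminus\tilde E_n$ consists of only finitely many intervals. You instead work \emph{directly} on $\tilde E_n$: each component meets $E$ (by the segment argument) and therefore contains an interval of length $2/n$, so only finitely many components fit inside the bounded set $\tilde E_n$. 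Both arguments ultimately pivot on the same threshold $2/n$, but yours avoids the decomposition of $\RR\setminus E$ and the summability of the gap lengths, while the paper's version makes more transparent exactly which gaps of $E$ persist as gaps of $\tilde E_n$ (namely those of length $>2/n$).
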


\begin{proof}
The point (1) is clear. To show (2) we use the fact that every open set in $\RR$ is a countable union of disjoint open intervals. Hence $\RR\backslash E$ is a disjoint union of maximal open intervals. Since $E$ is compact, two of these intervals are unbounded and the others $\{J_k\}_{k\in I}$, where $I$ is a countable set,  are contained in the convex hull of $E$. Thus, all but finitely many $J_k$ lie in a given ${\widetilde E}_n$, showing $\RR\backslash {\widetilde E}_n$ is finite. Thus, all but finitely many $J_k$ lie in a given ${\widetilde E}_n$, showing $\RR\backslash {\widetilde E}_n$ has finitely many open intervals. It is easy to see that each of the finite disjoint closed intervals in ${\widetilde E}_n$ must have positive measure.

\end{proof}

Furthermore we have  \cite[ Theorem 3.9] {edin}, \cite{Se} and \cite[Theorem 5.6.1, p. 306]{desc} 

\begin{thm}
Let $\displaystyle E=\bigcup_{j=1}^{l+1} E_j$ be an $l$-gap set with $E_j=[\alpha_j,\beta_j]$, $\alpha_j<\beta_j<\beta_{j+1}$. Then for all $m$ large, there exist $l$-gap sets 
$\displaystyle E^{(m)}= \bigcup_{j=1}^{l+1} E_j^{(m)}$ with
\begin{enumerate}
\item $E_j\subset E_j^{(m)}$,
\item Each $E_j^{(m)}$ has harmonic measure in $E^{(m)}$ equal to $\displaystyle k_j^{(m)}/m$ with $k_j^{(m)}\in\{1,2,...,\}$,
\item For some positive constants $C_1, C_2$,
\begin{eqnarray}
|E_j^{(m)}\backslash E_j|&\leq& C_1 m^{-1}\\
\C(E)\leq \C(E^{(m)})&\leq &\C(E)+C_2 m^{-1}
\end{eqnarray}
\end{enumerate}
\end{thm}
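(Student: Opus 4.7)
The plan is to parametrize $l$-gap sets containing $E$ by a finite-dimensional family and to show that the map sending such a set to its vector of harmonic measures at infinity is open near $E$, in fact a submersion onto the open simplex $\Sigma := \{x \in \RR_{>0}^{l+1} : \sum_j x_j = 1\}$. Rational points $(k_1/m, \ldots, k_{l+1}/m)$ with $k_j \in \NN^*$ and $\sum_j k_j = m$ form a $(1/m)$-net in $\Sigma$, so openness combined with a local Lipschitz inverse will produce, for each large $m$, a perturbation of $E$ whose harmonic measures are exactly the target rational vector and whose endpoints are within $O(1/m)$ of those of $E$.

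\textbf{Key step: parametrize and compute the differential.} For $\epsilon = (\epsilon_1, \ldots, \epsilon_{l+1}) \in \RR_{\geq 0}^{l+1}$ with $\beta_j + \epsilon_j < \alpha_{j+1}$, set $E^{(\epsilon)} := \bigcup_{j=1}^{l+1} [\alpha_j, \beta_j + \epsilon_j]$; trivially $E_j \subset E_j^{(\epsilon)}$. Using the formalism of section~\ref{cali}, the harmonic measures are the periods
\[
\Phi_j(\epsilon) := \omega_{E^{(\epsilon)}}(E_j^{(\epsilon)}, \infty) = \frac{1}{2\pi i} \int_{(E_j^{(\epsilon)})^*} w_{\infty_+,\infty_-}^{(\epsilon)}
\]
of the normalized third-kind differential on the hyperelliptic double $\widehat{\CC\setminus E^{(\epsilon)}}$. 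I would compute $D\Phi(0)$ by the standard variational formula: differentiating with respect to $\epsilon_k$ amounts to inserting the derivative of $w_{\infty_+,\infty_-}^{(\epsilon)}$ (a holomorphic differential on the limit surface $\widehat{\CC\setminus E}$) into the same period integrals. The resulting $(l+1) \times (l+1)$ matrix has rank exactly $l$ modulo the universal relation $\sum_j \Phi_j \equiv 1$, which is essentially the nondegeneracy of the period matrix of a basis of holomorphic differentials on the genus-$l$ hyperelliptic curve.

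\textbf{Conclusion and capacity bound.} The submersion theorem then provides a local Lipschitz inverse $\Phi^{-1}$ on a neighborhood $V$ of $\omega_E$. For $m$ large, some rational point of $\Sigma$ with denominator $m$ lies in $V$; let $\epsilon^{(m)}$ denote its preimage. Lipschitz continuity gives $\|\epsilon^{(m)}\|_\infty \leq C_1/m$, so that $E^{(m)} := E^{(\epsilon^{(m)})}$ satisfies $|E_j^{(m)} \setminus E_j| = \epsilon_j^{(m)} \leq C_1/m$, together with (1) and (2). For the capacity, $E \subset E^{(m)}$ yields $\C(E) \leq \C(E^{(m)})$ by monotonicity, and the smoothness (hence Lipschitz dependence) of $\log \C(E^{(\epsilon)})$ in $\epsilon$ near $0$ produces $\C(E^{(m)}) - \C(E) \leq C_2/m$. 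The main obstacle is the rank computation of $D\Phi(0)$: while nondegeneracy of period matrices of holomorphic differentials on a compact Riemann surface is classical, recasting it as the concrete statement that perturbations of distinct right-endpoints yield linearly independent perturbations of the harmonic-measure vector requires a careful variational calculation on the hyperelliptic family.
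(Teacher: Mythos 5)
First, note that the paper itself does not prove this theorem: it is recalled from the literature (\cite{edin}, \cite{Se}, and \cite[Theorem 5.6.1]{desc}), so your proposal has to stand on its own rather than be measured against an argument in the text. Your plan (perturb the endpoints, show the harmonic-measure map $\Phi$ is a submersion onto the simplex, invert locally, hit a rational point with denominator $m$) is a reasonable strategy, but as written it has a genuine gap precisely where the known proofs do their work: the sign constraint $\epsilon_j\geq 0$. You parametrize only enlargements, yet the local inverse furnished by the submersion/inverse function theorem lives on a full neighborhood of $0$ in $\RR^{l+1}$, and the preimage it assigns to your chosen rational point will in general have negative coordinates, i.e. it shrinks some bands. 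Then conclusion (1) fails, $E\not\subset E^{(m)}$, and with it your lower bound $\C(E)\leq\C(E^{(m)})$, which you obtained from monotonicity. To repair this you need the qualitative structure of $D\Phi(0)$: $\partial\Phi_j/\partial\epsilon_j>0$, $\partial\Phi_j/\partial\epsilon_k<0$ for $k\neq j$, with columns summing to zero. That structure forces the one-dimensional kernel to be spanned by a strictly positive vector (so the images of the coordinate directions positively span the tangent space of the simplex), and only then can one either slide along the fiber of $\Phi$ by $O(1/m)$ to reach a nonnegative preimage, or pick the target rational vector so that it is attainable by pure enlargements. None of this appears in your write-up.

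Second, the rank-$l$ nondegeneracy is asserted, not proved, and the appeal to ``nondegeneracy of the period matrix'' is not the right statement: the entries of $D\Phi(0)$ come from a Rauch-type variational formula for the dependence of the normalized third-kind differential $w_{\infty_+,\infty_-}$ on the branch points, not from periods of a fixed basis of holomorphic differentials, so the classical nondegeneracy of the period matrix does not directly yield what you need. The variational computation you defer is exactly the same computation that would give you the sign pattern above; together they are the heart of the proof. The proofs cited by the paper (Bogatyrëv, Peherstorfer, Totik, as presented in \cite[Theorem 5.6.1]{desc}) bypass the inverse function theorem entirely and adjust the endpoints by a monotonicity-and-continuity (intermediate value / iterative) scheme, which produces the nonnegativity of the enlargements and the $O(1/m)$ bounds simultaneously; if you want to keep your IFT route, you must add the Jacobian sign analysis and the fiber (or cone) argument to enforce $\epsilon^{(m)}\in\RR_{\geq 0}^{l+1}$.
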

Then we have
\begin{thm}\label{4.2}
Let $E\subset \RR$ be compact. Then there exist $E_n$ so that $E\subset...\subset E_{n+1}\subset E_n\subset ...\subset \RR$ and
$\displaystyle \bigcap_n E_n=E$ holds, and
\[E_n\subset E_{n-1}^{int}\]
and each $E_n$ is the spectrum of some two-sided periodic Jacobi matrix. Moreover
\begin{enumerate}
\item $\mu_{E_n}\to \mu_E$,
\item $\C(_n)\to \C(E)$.
\end{enumerate}
\end{thm}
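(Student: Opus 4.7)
The plan is to combine the two preceding approximation results with the following classical fact from inverse spectral theory of periodic Jacobi matrices: a finite disjoint union of closed real intervals is the spectrum of some two-sided periodic Jacobi matrix of period $p$ if and only if each of its connected components carries harmonic measure of the form $k_j/p$ with $k_j\in\NN$ and $\sum_j k_j = p$. The $l$-gap perturbation theorem quoted just above produces sets satisfying exactly this condition.

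First, the preceding proposition provides a decreasing sequence $(\tilde E_n)$ of finite unions of disjoint closed intervals with $E\subset\tilde E_n$ and $\bigcap_n \tilde E_n = E$; by passing to a subsequence and replacing each $\tilde E_n$ by its $\eta_n$-neighborhood for a sufficiently small $\eta_n\downarrow 0$, I may further arrange $\tilde E_{n+1}\subset \tilde E_n^{\mathrm{int}}$ while still having $\bigcap_n \tilde E_n = E$. Next, each $\tilde E_n$ is an $l_n$-gap set, so the $l$-gap perturbation theorem yields, for every large $m$, a set $\tilde E_n^{(m)}\supset \tilde E_n$ whose components differ in Lebesgue measure from those of $\tilde E_n$ by at most $C_1/m$, whose components have harmonic measure in $\tilde E_n^{(m)}$ of the form $k_j^{(m)}/m$ with $k_j^{(m)}\in\NN$, and with $|\C(\tilde E_n^{(m)})-\C(\tilde E_n)|\leq C_2/m$. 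By the characterization above, each $\tilde E_n^{(m)}$ is the spectrum of a two-sided periodic Jacobi matrix of period $m$. A diagonal extraction now produces $m_n$ large enough that the $O(1/m_n)$ enlargement keeps $\tilde E_n^{(m_n)}$ inside $\tilde E_{n-1}^{\mathrm{int}}$, that $|\C(\tilde E_n^{(m_n)})-\C(\tilde E_n)|\leq 1/n$, and that $\delta(\tilde E_n^{(m_n)},E)\leq 1/n$; set $E_n:=\tilde E_n^{(m_n)}$.

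By construction $E\subset E_n\subset E_{n-1}^{\mathrm{int}}$, and the $O(1/m_n)$ bound combined with $\delta(\tilde E_n,E)\to 0$ yields $\bigcap_n E_n = E$. Continuity of the logarithmic capacity along the decreasing sequence $(\tilde E_n)$ gives $\C(\tilde E_n)\to \C(E)$, and the $1/n$ comparison then gives $\C(E_n)\to \C(E)$; together with $\delta(E_n,E)\to 0$, Lemma \ref{delta} yields $\mu_{E_n}\to \mu_E$. The principal obstacle is the diagonal step itself: the collar radii $\eta_n$ of the first step must be fixed \emph{before} the $m_n$ are chosen, after which $m_n$ must be taken large enough that the rational-harmonic-measure perturbation of size $O(1/m_n)$ remains compatible with the strict interior nesting $\tilde E_n^{(m_n)}\subset \tilde E_{n-1}^{\mathrm{int}}$; one must also invoke the spectral characterization as a black box, since a full proof of it would lie outside the scope of the argument. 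Everything else is a routine bookkeeping of the two quoted approximation results.
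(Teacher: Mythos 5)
Your proposal is correct and follows essentially the route the paper intends: it assembles the preceding proposition (the sets $\tilde E_n$), the $l$-gap theorem producing enlargements with rational harmonic measures (hence spectra of periodic Jacobi matrices, the ``calibrated'' characterization), a diagonal choice of $m_n$ preserving the nesting, and Lemma \ref{delta} together with continuity of capacity along decreasing compacts to get $\C(E_n)\to\C(E)$ and $\mu_{E_n}\to\mu_E$; the paper itself leaves exactly this assembly to the cited references. The only caveat, inherited from the paper's statement, is that the conclusion $\mu_{E_n}\to\mu_E$ implicitly requires $\C(E)>0$ so that $\mu_E$ is defined.
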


Now, we know that $E_n$ is a finite union of calibrated intervals and then by \cite{edin},  \cite{Se}, there exists a sequence of monic polynomials  with coefficients in $\ZZ$, $P_{k, E_n}$, whose all roots are in $E_n$, such that $\lim_k \mu_{P_{k, E_n}}=\mu_{E_n}$. By the diagonal process $\mu_E=\lim_j\mu_{P_{k_j, E_{n_j}}}$.

\begin{rem}
\begin{enumerate}
\item In particular, in the case of a compact of $\RR$, we can release the condition of countinuous boundary in the theorem 9.1 (Bilu, Rumely) of \cite{a}. We state this theorem in general terms  in the following section.
\item  Note that \cite{Se} Theorem 1.6.2  is stronger, in the sense that all the roots of the polynomials in question, are in the compact $E$.
\end{enumerate}
\end{rem}

\section{Some approximations}
In this section we give some known theorems on approximation of equilibrium measure by counting measure. The following article (\cite{BPRS}) goes further.

First of all, Rumely (\cite{ru}), generalizing a Bilu's theorem (\cite{Bi}) obtained the following equidistribution result.
\begin{thm}
Suppose a compact set $K\subset \CC$ with continue boundary, has capacity $\C(K)=1$ and is stable under complex conjugation. Let $(\alpha_n)_{n\geq 1}$ a sequence of algebraic integers ($ \alpha_n\not=\alpha_m$ if $n\not = m$), $\alpha_n$ of degree $d_{\alpha_n}$ such that for all open $U$ containing $K$, there exists $n_0$ such that for all $n\geq n_0$, $\alpha_n$ with all its conjugate, $O(\alpha_n)$,  are in $U$. Let $\Delta_n$ the measure
$$\Delta_n=\frac{1}{d_{\alpha_n}}\sum_{\beta\in O(\alpha_n)}\delta_\beta\, ,$$
then the measures $\Delta_n$ converge weakly to the equilibrium measure of $K$, $\mu_K$.

\end{thm}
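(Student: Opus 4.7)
The plan is to follow the classical Bilu--Rumely strategy: produce the limit by weak compactness, identify its support inside $K$, then use the arithmetic hypothesis ($\alpha_n$ is an algebraic integer, so the discriminant of its minimal polynomial is a nonzero rational integer) to force the energy of the limit to be at most $\ln \C(K)^{-1} = 0$. Uniqueness of the equilibrium measure of $K$ then closes the argument.

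\textbf{Tightness and extraction of a limit.} Fix any bounded open neighborhood $U_0$ of $K$. By hypothesis, for $n$ large enough $\su(\Delta_n) = O(\alpha_n) \subset U_0$, so all but finitely many $\Delta_n$ are supported in the compact set $\overline{U_0}$. By the Banach--Alaoglu theorem applied to $\M_{+,1}(\overline{U_0})$, the sequence has at least one weak limit point $\nu$; passing to a subsequence, assume $\Delta_{n_k} \to \nu$ weakly. Applying the same argument with a shrinking family of open neighborhoods $U \supset K$ (and the fact that $K = \bigcap U$ over such $U$) shows $\su(\nu) \subset K$.

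\textbf{Energy upper bound via the discriminant.} Let $P_n \in \ZZ[X]$ be the minimal polynomial of $\alpha_n$. Since $P_n$ is monic with integer coefficients and $\alpha_n \ne \alpha_m$ for $n \ne m$, the conjugates are pairwise distinct (inside each orbit) and $\Disc(P_n) = \prod_{i \ne j}(\beta_i - \beta_j)$ is a nonzero rational integer, hence $|\Disc(P_n)| \ge 1$. Consequently the off-diagonal energy
\begin{equation*}
\tilde I(\Delta_n) := \frac{1}{d_{\alpha_n}^2} \sum_{i \ne j} \ln \frac{1}{|\beta_i - \beta_j|} = -\frac{1}{d_{\alpha_n}^2} \ln |\Disc(P_n)| \le 0 .
\end{equation*}
To pass this bound to the weak limit, truncate the kernel: for $M > 0$, set $k_M(z,w) = \min(-\ln|z-w|,\, M)$, which is continuous and bounded on $\overline{U_0} \times \overline{U_0}$, and let $I_M(\mu) = \iint k_M(z,w)\, d\mu(z) d\mu(w)$. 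Separating the diagonal gives
\begin{equation*}
I_M(\Delta_n) \le \tilde I(\Delta_n) + \frac{M}{d_{\alpha_n}} \le \frac{M}{d_{\alpha_n}} .
\end{equation*}
Since $k_M$ is continuous, $I_M(\Delta_{n_k}) \to I_M(\nu)$, and since $d_{\alpha_{n_k}} \to \infty$ (the $\alpha_n$ being distinct and confined to a bounded region forces their degrees to tend to infinity, via a pigeonhole / Northcott-type argument on bounded coefficients), we obtain $I_M(\nu) \le 0$ for every $M$. Monotone convergence yields $I(\nu) \le 0 = \ln \C(K)^{-1}$.

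\textbf{Uniqueness and convergence of the full sequence.} Since $\su(\nu) \subset K$ and $\C(K) = 1$, we have $I(\nu) \ge v(K) = 0$, hence $I(\nu) = I(\mu_K)$. The uniqueness of the minimizer of the energy on a compact of positive capacity forces $\nu = \mu_K$. Every weak limit point of $(\Delta_n)$ therefore equals $\mu_K$, and by compactness of $\M_{+,1}(\overline{U_0})$ in the weak-$*$ topology this forces $\Delta_n \to \mu_K$ weakly. The invariance of $K$ under complex conjugation is used only to ensure $\mu_K$ is itself invariant under conjugation, which is automatic for limits of the orbit measures $\Delta_n$.

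\textbf{Main obstacle.} The delicate point is the passage from the discriminant inequality $\tilde I(\Delta_n) \le 0$ to the inequality $I(\nu) \le 0$: the kernel $-\ln|z-w|$ is not continuous and the diagonal contribution to $I(\Delta_n)$ is $+\infty$, so a naive weak convergence argument fails. The truncation $k_M$ together with control of the diagonal remainder $M / d_{\alpha_n}$ (which requires the degrees to tend to infinity) is what makes this step work, and is the technical heart of the proof.
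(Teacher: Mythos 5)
Your argument is correct: tightness and support localization, the discriminant bound $\lvert\prod_{i\ne j}(\beta_i-\beta_j)\rvert\ge 1$ giving $\tilde I(\Delta_n)\le 0$, the truncated kernel $k_M$ with the diagonal term $M/d_{\alpha_n}$ controlled by a Northcott-type argument forcing $d_{\alpha_n}\to\infty$, and the equality case $I(\nu)=0=v(K)$ with uniqueness of the equilibrium measure — this is exactly the standard Bilu--Rumely energy argument, and the hypothesis $\C(K)=1$ is used precisely where it must be. Note that the paper itself offers no proof of this statement: it is quoted from Rumely (generalizing Bilu) and only commented on, so there is no internal proof to compare against; your write-up is essentially the proof from those references, and it in fact never needs the ``continuous boundary'' assumption, which enters the cited versions for reasons external to this potential-theoretic argument.
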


%

This theorem answers Serre's question in the case of a compact of capacity 1, but in the proof the role of the polynomials $P_n(z)=\Pi_{\beta\in O(\alpha_n)}(z-\beta)$ is not explicit.

The second result we want to point out is the following theorem due to Pritsker \cite[Theorem 2.3]{prit}.
\begin{thm} 
Given any positive Borel measure $\mu$, $0\leq \mu(\CC)\leq 1$, that is symmetric about real line, there is a sequence of complete sets of conjugate algebraic integers such that their counting measures $\tau_n$ converge weakly to $\mu$.
\end{thm}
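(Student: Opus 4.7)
The plan is to reduce the approximation problem to a controlled polynomial construction and then invoke an irreducibility mechanism to extract a complete Galois orbit. First I would approximate $\mu$ in the vague topology by atomic symmetric measures of the form $\nu_N = \frac{1}{N}\sum_{j=1}^k n_j \delta_{z_j}$, where the $z_j \in \bar{\QQ}$ form a finite set stable under complex conjugation and $n_j \in \NN$ satisfies $\sum_j n_j \leq N$. Such atomic measures are vaguely dense among symmetric positive Borel measures of total mass at most $1$; the defect $1 - (\sum n_j)/N$ corresponds precisely to the fraction of roots that will escape to infinity in the final limit (recall that the $\tau_n$ are probability measures, so a strict loss of mass can only occur at infinity).

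Given $\nu_N$, the central task is to construct, for each large multiple $D$ of $N$, a monic integer polynomial $P_D \in \ZZ[X]$ of degree $D$ whose zero-counting measure $\mu_{P_D}$ is close to $\nu_N$ away from infinity. Here I would imitate the Fekete-Minkowski argument of Chinburg's proof recalled in the excerpt: in the parameter space $\RR^{D+1}$ of polynomials of degree $D$, consider the convex symmetric body of real polynomials $f$ with $|f^{(i)}(z_j)|$ small for $i=0,1,\ldots,n_j-1$ and each $j$ (so that $z_j$ is an approximate zero of multiplicity $n_j$), together with a mild growth condition on an auxiliary symmetric compact $K_\infty$ placed far from the support of $\nu_N$ to host the escaping roots. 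A weighted variant of the asymptotic volume formula $\lim n^{-2}\cdot 2\ln\psi_n(f_n(K)) = -\ln\C(K)$ recalled in the excerpt should show that this body has volume exceeding $2^{D+1}$ for $D$ large; Minkowski's theorem then furnishes a nonzero $P_D \in \ZZ^{D+1}$ in it, and a potential-theoretic computation ($\frac{-1}{D}\log|P_D|$ must approximate the expected equilibrium potential of $\nu_N$) recovers $\mu_{P_D} \approx \nu_N$. Finally, to extract a genuine Galois orbit, I would apply Hilbert's irreducibility theorem to the pencil $P_D(X)-m$, $m \in \ZZ$: for $m$ outside a thin set, $P_D-m$ is irreducible over $\QQ$, its roots are algebraic integers forming a complete conjugate orbit, and for $|m|$ small the root distribution remains close to that of $P_D$ itself.

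The principal obstacle is the construction step. Chinburg's estimate handles only the condition $|f|<1$ on a single compact, whereas here we must encode prescribed vanishing multiplicities $n_j$ at distinct symmetric points of $\bar{\QQ}$ while simultaneously tolerating a positive fraction of roots escaping to infinity. This calls for a weighted transfinite-diameter analysis — essentially a finer version of the computation sketched in the excerpt — to certify that the associated convex body has volume exceeding $2^{D+1}$ in $\RR^{D+1}$ for $D$ large, with constants depending sharply on the multiplicities $n_j$ and on the geometry of $K_\infty$. This quantitative step is, in my view, the real heart of Pritsker's theorem and the point most distant from the classical Fekete-Szeg\"o toolkit presented earlier in the paper.
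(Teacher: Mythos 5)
Your strategy diverges from the proof the paper recalls (Pritsker's, via Motzkin), and the final step of your plan contains a genuine error. The paper's route is much more direct: Motzkin's theorem (\cite[Theorem 3.2]{Mot}, quoted in the section ``Some approximations'') already produces, for any $n$ points symmetric about the real axis and any $\epsilon>0$, an \emph{irreducible} monic polynomial in $\ZZ[X]$ of degree $n+1$ whose first $n$ roots are $\epsilon$-close to the prescribed points, the last conjugate being left free and sent to infinity; irreducibility is built into the construction, and the escaping root both pays for it and accounts for the mass defect $1-\mu(\CC)$. One then approximates $\mu$ by counting measures of symmetric finite point sets and applies Motzkin to each, so that $\tau_{n+1}=\frac{1}{n+1}\sum_{k=1}^{n+1}\delta_{\zeta_k}\to\mu$. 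No convex-body construction and no separate irreducibility device are needed.

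The concrete gap in your proposal is the specialization step $P_D(X)-m$. For any integer $m\neq 0$ the roots of $P_D-m$ lie on the lemniscate $\{z:\,|P_D(z)|=|m|\}$, whose capacity is $|m|^{1/D}\geq 1$; hence when the target measure is carried by a set of capacity $<1$ --- which is allowed, since the theorem makes no capacity hypothesis --- the zero distribution of $P_D-m$ cannot remain close to that of $P_D$. The simplest instance is $\mu=\delta_0$: your construction would have to produce essentially $P_D(X)=X^D$ (by Kronecker's theorem any monic integer polynomial with all roots in a small disc about $0$ is forced to this), and then $X^D-m$ has its roots equidistributing on the circle $|z|=|m|^{1/D}\to 1$, i.e.\ converging to normalized arc measure on the unit circle, not to $\delta_0$. (Hilbert irreducibility also only guarantees infinitely many admissible $m$, not one in a prescribed bounded range, but the failure above occurs for \emph{every} nonzero integer $m$.) The Minkowski step is also problematic: Minkowski's theorem yields a nonzero lattice point in a symmetric convex body, but monic polynomials form an affine slice of $\RR^{D+1}$, not a symmetric convex set containing $0$, so monicity cannot be extracted this way; Chinburg's volume estimate concerns the non-monic auxiliary polynomials $f_a$ with $|f_a|<1$ on a compact of capacity $<1$, used together with Kronecker's theorem, and does not produce monic polynomials with prescribed root distribution. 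The tension you flag as ``the real heart'' is resolved in Pritsker's and the paper's treatment precisely by the Motzkin mechanism: one conjugate is sacrificed to infinity, which simultaneously secures irreducibility, monicity with integer coefficients, and the possibility of mass loss.
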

In this theorem we consider then the particular case of $\mu_K$, the equilibrium measure of a compact $K$ symmetric with respect to the real axis.

The proof needs in particular,  some  results on approximation of a finite set of points by algebraic integers.

In \cite{Mot}, Theorem 3.2, p.158 (see also \cite{du} for its effective version), it is proving that for every $n$ given numbers $z_1,...,z_{n}$, and every $\epsilon>0$, there exists an irreductible equation with complex integral coefficients $\alpha_1,...,\alpha_{n+1}$ and with roots $\zeta_{k,\epsilon}$ such that $|\zeta_{k,\epsilon} -z_k|<\epsilon$ for $k=1,...,n$. The same is true for real integral coefficients provided that the numbers $z_1,...,z_{n}$ are symmetric to the real axis.

For completeness we recall also the more general context of Ferguson's theorem.
In \cite{ferg} (Theorem A.1., p. 147) we have that if $\alpha_1$, ..., $\alpha_n$ are a complete set of conjugate algebraic integers over $\QQ[i]$, $\epsilon$ any positive number, and $z_2$, ...,$z_n$ any complex numbers. Then there is a polynomial $q\in \ZZ[i][z]$ such that
\[|q(\alpha_j)-z_j|<\epsilon, \quad 2\leq j\leq n.\]

A second ingredient of the proof of \cite[Theorem 2.3]{prit} is to express $\mu_K$ as the weak limit of counting measures . In fact,
there exist sequences of polynomials with their zeros in $K$, that verify
$\mu_n \to \mu_K$. We give an example which is linked with other interpretations of the capacity of a compact (see for example \cite{Ts}, \cite{Se} Appendix, \cite{Simon} Appendix B).

First of all, let $K\subset \CC$ be compact and infinite.  An $n$ point Fekete set is a set $\{z_j, j=1,...,n\}\subset K$ that maximizes
\[q_n(z_1,...,z_n)=\prod_{i\not=j}|z_i-z_j|\, .\]
 The Fekete constant (or diameter of $K$) is defined by
$$d_n(K)=q_n(z_1,...,z_n)^{1/n(n-1)}$$
for the maximizing set. $d_n(K)$ has a limit called the transfinite diameter of $K$, which is in fact, equal to $\C(K)$.

Secondly,  we recall that the Chebyshev polynomials, $T_n$, are defined as those monic polynomials of degree $n$ which minimize
$||z^n +p(z)||$, where $p\in \p_{n-1}$, the vectorial space of all polynomials of degree $\leq n-1$ and $||.||_K$ is defined by $||f||_K=\max_{z\in K}|f(z)|$. We have 
$\C(K)=\lim ||T_n||_K^{1/n}$.

From the maximum principle for analytic functions, it follows that for all $n$, the Fekete sets lie on the outer boundary of $K$. We have the following theorem.
\begin{thm}
The normalized density of Fekete sets converges to $\mu_K$, the equilibrium measure for $K$.
\end{thm}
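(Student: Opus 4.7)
The plan is to proceed in three steps. \textbf{First}, write $\mu_n = \frac{1}{n}\sum_{j=1}^n \delta_{z_j^{(n)}}$ for the normalized counting measure of an $n$-point Fekete set in $K$. Since $\M_{+,1}(K)$ is weakly compact (Banach--Alaoglu), every subsequence of $(\mu_n)$ has a further weakly convergent sub-subsequence, so it suffices to show that any weak limit $\nu$ of $(\mu_n)$ coincides with $\mu_K$. By uniqueness of the equilibrium measure this amounts to $I(\nu)=v(K)$, and since $I(\nu)\geq v(K)$ holds by definition of $v(K)$, only the upper bound requires genuine work.

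\textbf{Second}, I would exploit the Fekete extremal property against a continuous truncated kernel. For $M>0$, set $k_M(z,w) = \min(\ln|z-w|^{-1},M)$, which is continuous on $K\times K$. Because $k_M(z,z)=M$ on the diagonal while $k_M\leq \ln|z-w|^{-1}$ elsewhere, the maximality defining a Fekete set gives
$$\int\int k_M\,d\mu_n\,d\mu_n \;\leq\; \frac{n-1}{n}\bigl(-\ln d_n(K)\bigr) + \frac{M}{n}.$$
Passing to the limit along a weakly convergent subsequence $\mu_{n_k}\to\nu$, the left side tends to $\int\int k_M\,d\nu\,d\nu$ by continuity of $k_M$; letting $M\to\infty$ and invoking monotone convergence then yields $I(\nu)\leq \lim_{n\to\infty}\bigl(-\ln d_n(K)\bigr)$.

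\textbf{Third}, I need the classical identity $d_n(K)\to\C(K)$ to close the loop. The inequality $\liminf\bigl(-\ln d_n(K)\bigr)\geq v(K)$ is immediate: the previous step combined with the definition of $v(K)$ forces $\lim\bigl(-\ln d_n(K)\bigr)\geq I(\nu)\geq v(K)$ for any weak limit $\nu$. For the reverse inequality I would use an averaging argument: integrating $\frac{1}{n(n-1)}\sum_{i\neq j}\ln|z_i - z_j|^{-1}$ against the product measure $\mu_K^{\otimes n}$ equals $I(\mu_K)=v(K)$ (after an auxiliary truncation $k_M$ to handle the diagonal when $\mu_K$ has atoms), so some configuration $(z_1,\ldots,z_n)\in K^n$ attains at most this value; since the Fekete configuration maximizes $\prod_{i\neq j}|z_i-z_j|$, it corresponds to the minimum of $-\ln d_n$, whence $-\ln d_n(K)\leq v(K)+o(1)$. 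The case $\C(K)=0$ is trivial.

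Combining the three steps gives $I(\nu)\leq v(K)$, hence $\nu=\mu_K$ by uniqueness of the equilibrium measure, and so the whole sequence $(\mu_n)$ converges weakly to $\mu_K$. The \textbf{main technical obstacle} is the singularity of $\ln|z-w|^{-1}$ on the diagonal, which prevents a direct application of weak convergence to the Fekete identity $\sum_{i\neq j}\ln|z_i^{(n)}-z_j^{(n)}|^{-1}=-n(n-1)\ln d_n(K)$; the continuous truncation $k_M$ followed by monotone convergence in $M$, used both to transfer the Fekete bound to $\nu$ and to evaluate the product-measure integral in step three, is the device that resolves it.
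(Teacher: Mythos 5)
Your argument cannot be matched line-by-line against the paper, because the paper gives no proof of this statement: it quotes it as a classical fact about Fekete points, with pointers to Tsuji, to the appendix of Serre's Bourbaki expos\'e, and to Simon's appendix. What you propose is the standard energy argument, and it is correct: weak compactness of $\M_{+,1}(K)$, the continuous truncated kernel $k_M(z,w)=\min(\ln|z-w|^{-1},M)$ to transfer the Fekete extremality to a weak limit $\nu$, the averaging of the discrete energy against $\mu_K^{\otimes n}$ to obtain $-\ln d_n(K)\le v(K)$, and uniqueness of the equilibrium measure to identify $\nu=\mu_K$; as a byproduct you re-prove the identity $d_n(K)\to\C(K)$, which the paper simply takes for granted. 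It is worth contrasting this with the route the paper does use for its analogous lemma ($\mu_{P_n}\to\mu_{\sigma(J_r)}$ for the Chebyshev-type polynomials built from $\tilde P$): there the tools are the Bernstein--Walsh inequality, the Upper Envelope Theorem and a domination principle from Simon's appendix, a method adapted to zero-counting measures of asymptotically extremal polynomials, whereas your truncation-plus-averaging argument is tailored to Fekete configurations and needs nothing beyond lower semicontinuity and uniqueness of $\mu_K$, so it is the more elementary of the two. Three small repairs: the parenthetical truncation ``to handle the diagonal when $\mu_K$ has atoms'' is unnecessary and would in any case give an inequality in the wrong direction (since $k_M\le\ln|z-w|^{-1}$, a bound on the truncated sum does not bound the true one); it is moot because $\C(K)>0$ forces $I(\mu_K)<\infty$, hence $\mu_K$ is atomless and the diagonal is $\mu_K\otimes\mu_K$-null, so the direct integration is legitimate. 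When $\C(K)=0$ the statement is vacuous rather than trivial, since $\mu_K$ is not defined. Finally, if you do not wish to invoke the classical monotonicity of $d_n(K)$, replace $\lim(-\ln d_n(K))$ by a $\limsup$; since your Step 3 gives $-\ln d_n(K)\le v(K)$ for every $n$, the chain $I(\nu)\le\limsup_n(-\ln d_n(K))\le v(K)$ closes unchanged.
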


\begin{rem}
Let $\{P_n\}$ be any sequence of monic polynomials having all their zeros in $K$ and such that the normalized zero counting measures for $P_n$ converge weakly to $\mu_K$. If $\partial K$ 
is regular (e.g., if it is connected), then 
\begin{enumerate}
\item $P_n$ are asymptotically optimal for the Chebyshev problem:
$$\lim_{n\to \infty}||P_n||_K^{1/n}=\C(K)\, .$$
If $\C(K)>0$ (so that $\mu_K$ is defined), then we also have:
\item Uniformly on compact subsets of the unbounded component of $\CC\backslash K$,
$$\lim_{n\to \infty}|P_n(z)|^{1/n}=\exp\{-\Phi_{\mu_K}(z)\}\, .$$
\end{enumerate}
\end{rem}


We come now to the proof that interest us.
We have then the following result (\cite{prit} p. 16). Let  a sequence of polynomials $(P_n)$ such that the corresponding normalized zero counting measure for $P_n$, $\mu_n$ converge to $\mu_K$. We can suppose $deg(P_n)=n$ and denote its zeros by $z_1,...,z_n$. Therefore, we can approximate this measure by a sequence of the counting measures $\tau_{n+1}$ for the complete set of conjugate algebraic integers $\zeta_k=\zeta_{k,1/n}, \quad k=1,...,n+1$ by using the theorem of Motzkin. For any $n\in \NN$, we approximate each point $z_k$ as close as we wish by one of the conjugate algebraic integers $\zeta_k$, $1\leq k\leq n$, obtained from Motzkin's theorem, while let the remaining $(n+1)$th conjugate algebraic integer $\zeta_{n+1}\to \infty$ as $n\to \infty$ (see \cite{du} p.160-161 for details). It follows that the resulting measures
$$\tau_{n+1}=\frac{1}{n+1}\sum_{k=1}^{n+1}\delta_{\zeta_k}$$
converge to $\mu_K$ as $n\to \infty$.

In fact $\displaystyle \mu_n=\frac{1}{n}\sum_{k=1}^n \delta_{z_k}\to \mu_K$, then $\displaystyle \nu_n=\frac{1}{n}\sum_{k=1}^n \delta_{\zeta_k}\to \mu_K$. For $f$ continous on $K$ and $n$ sufficiently large we obtain
\[\int f\tau_{n+1}=\int f\frac{n}{n+1}\nu_n=\frac{n}{n+1}\int f\nu_n \to \mu_K.\]

But as  already seen, the polynomial $Q_n$ with roots the complete set of conjugate algebraic integers $\zeta_k$, has $\zeta_{n+1}$ not in $K$ (for $n$ sufficiently large). So the sequence $(Q_n)$ doesn't answer Serre's suggestion (Question \ref{s}).


Finally we signal the link between orthogonal polynomials and equilibrium measure. 
Let K a compact of capacity $\C(K)>0$, and $d\rho$ be a measure with $\text{supp}(d\rho)=K$. Then we can form the uniquely existing orthonormal polynomials
$$p_n(z)=\gamma_n z^n+....,\quad \gamma_n>0, n\in\NN,$$
with respect to $\rho$. $\gamma_n$ is called the leading coefficient of $p_n$. The measure $\rho$ is said to be regular if $\lim \gamma_n^{1/n}=\dfrac{1}{\C(K)}$.
We have the following theorem (\cite{Simon} Thm 1.7, \cite{S-T} Thm 2.2.1).
\begin{thm}
Let K a compact of capacity $\C(K)>0$, such that $\Omega$ is dense in $\CC$. Let $d\rho$ be a measure with $\text{supp}(d\rho)=K$. If $\rho$ is regular, then $d\mu_{p_n}$ converges weakly to $d\mu_K$.

\end{thm}

\section{JACOBI}\label{jac}

In this paragraph, we recall some results on Jacobi operators, that we will need further.

A densely defined operator, $A$, on a Hilbert space $\mathcal{H}$, has a domain 
$D(A) \subset \mathcal{H}$, a dense subspace, and is a linear map of $D(A)$ into $\mathcal{H}$. 
Associated to $A$ is its graph, $\Gamma(A) \subset \mathcal{H} \times \mathcal{H}$, defined by
\(
\Gamma(A)=\{(\varphi,A\varphi)\mid \varphi\in D(A)\}.
\)
$\Gamma(A)$ is always a subspace of $\mathcal{H}\times \mathcal{H}$. 
$A$ is called closed if and only if $\Gamma(A)$ is closed.
$B$ is an extension of $A$ if and only if $\Gamma(A)\subset \Gamma(B)$, 
that is, $D(A)\subset D(B)$ and $B \upharpoonright D(A)=A$.

Given an operator $A$, we define $D(A^*)$ to be those $\varphi\in\mathcal{H}$ for which 
there is an $\eta\in\mathcal{H}$ with
\[
\langle \eta, \gamma\rangle = \langle \varphi, A\gamma\rangle
\qquad \text{for all }\gamma \in D(A). 
\]

$\eta$ is uniquely determined if it exists, since $D(A)$ is dense. 
We then set $\eta = A^* \varphi$, so
\[
\langle A^*\varphi,\gamma\rangle = \langle \varphi,A\gamma\rangle
\qquad \text{for all }\gamma\in D(A),\ \eta\in D(A^*).(*)
\]

$A^*$ is called the adjoint of $A$.
$A^*$ is thus defined to be the maximal operator so that $(*)$ holds. Moreover we have that an operator on a
Hilbert space is closable if and only if $A^*$ is densely defined, and in that case, $A^*$ is closed and its closure,
the smallest closed extension of $A$, $\overline{A}$, is $\overline{A} = (A^*)^*$.
If $A$ is a bounded operator then $\sigma(A^*) = \overline{\sigma(A)}$.
In effect, if $B$ is an invertible operator on a Hilbert space then $B^*$ is invertible and
\(
(\mu I - A)^* = \overline{\mu} I - A^*.
\)

Given complex numbers $a_n, b_n$, $n \in \ZZ$, with $b_n \ne 0$ for all $n$, we associate the infinite tridiagonal  two-sided complex Jacobi matrix
\[
\begin{pmatrix}
 \ddots &\ddots & \ddots &\ddots & \ddots & \ddots & \ddots\\
 \ddots &\ddots & a_{-1} &b_{-1}  & 0& \ddots & \ddots\\
  \ddots &0 & b_{-1} &a_0 & b_0 & 0   & \cdots \\
  \ddots &\ddots & 0 &b_0 & a_1 & b_1 & 0      & \cdots \\
  \ddots &\ddots & \ddots &0   & b_1 & a_2 & b_2    & \ddots \\
 \ddots &\ddots & \ddots &\ddots & \ddots & \ddots & \ddots
\end{pmatrix}.
\tag{1.1}
\]

In the symmetric case $b_n, a_n \in \mathbb{R}$ for all $n$ one recovers the classical Jacobi matrix.
Denoting by $\mathscr{C}_0 \subset \ell^2(\ZZ)$ the linear space of finite linear combinations of the standard basis 
$(e_n)_{n\in\ZZ}$, we may identify via the usual matrix product a complex Jacobi matrix  with an operator
acting on $\mathscr{C}_0$. Its closure  is called the corresponding second-order difference operator or
Jacobi operator.
$J$ is bounded  is equivalent to $\sum |a_n| + |b_n|\leq \infty$.

Let a Jacobi matrix defining a bounded self-adjoint operator on $\ell^2(\mathbb{Z})$, the two-sided Jacobi operator $J$:
\[
J e_n = b_{n-1}e_{n-1} + a_n  e_n + b_n e_{n+1}, \qquad n \in \mathbb{Z},
\]
where $(e_n)$ is a standard basis in $\ell^2(\mathbb{Z})$.

Split the axis $\mathbb{Z}$ into two semi-axes $\mathbb{Z} = \mathbb{Z}_+(m) \cup \mathbb{Z}_-(m)$,  
$\mathbb{Z}_+(m)=\{ n\in\mathbb{Z}: n\ge m\}$,  
$\mathbb{Z}_-(m)=\{ n\in\mathbb{Z}: n\le m-1\}$.  
Then
\[
\ell^2(\mathbb{Z}) = \ell^2_+(m) \oplus \ell^2_-(m), 
\qquad 
\ell^2_{\pm}(m) = \ell^2(\mathbb{Z}_{\pm}(m)).
\]
Define operators $J_{\pm}(m) = P_{\pm}(m) J P_{\pm}(m)$, where $P_{\pm}(m)$ are the orthogonal projectors onto $\ell^2_{\pm}(m)$.

In other words, we represent the operator $J$ in block form:
\[
J =
\begin{pmatrix}
J_-(m) & b_m e_m\langle \cdot , e_{m+1} \rangle \\
b_m e_{m+1}\langle \cdot , e_m \rangle & J_+(m)
\end{pmatrix}.
\]

%

We use the representation:
\[
J =
\begin{pmatrix}
J_- & 0 \\
0 & J_+
\end{pmatrix}
+ b_0\{e_0(\cdot,e_1) + e_1(\cdot,e_0)\}.
\]
To the operator $J$ we associate  its resolvent $R(z)=(J-zI)^{-1}=(J-z)^{-1}$ holomorphic on the resolvent set of $J$, $\rho(J)=\CC \backslash \sigma(J)$, where $\sigma(J)$ is the spectrum of the Jacobi operator. Its essential spectrum
$\sigma_{ess}(J)$ is the spectrum less the isolated eigenvalues of finite multiplicity.

%
%
%
%
%
%

We have (see for exemple \cite{Te}), in particular
\begin{equation*}
\sigma_{ess}(J)=\sigma_{ess}(J_-)\cup\sigma_{ess}(J_+)\, .
\end{equation*}


\section{Demonstration of the theorem following Serre.}
We recall the context. 
Let $K$ be a compact with $C^{\infty}$ boundary , symmetric with respect to the real axis, and $\Omega$ be the unbounded component of $\CC\backslash K$ (i.e. the component of the complement of $K$ which contains infinity). The boundary $\partial \Omega$ consists of $r$ mutually exterior curves, $\partial \Omega=(\Gamma_1,...,\Gamma_r)=\Gamma$. We know that $\mu_K$ has support in $\partial \Omega$.
$K$ and $\partial \Omega$ have same capacity, same equilibrium measure and same potential function (see for example \cite{Ts}, p.61).
 The boundary of the outer component, is symmetric with respect to the real axis. 
Let
\[
 \Gamma^+ = \Gamma \cap \{ z,\, \Im z \ge 0 \}
 \quad\text{and}\quad
 \Gamma^- = \Gamma \cap \{ z,\, \Im z \le 0 \} = \overline{\Gamma^+}.
\]

\subsection{The case of the spectrum of a periodic Jacobi matrix}
\subsubsection{A compact set well calibrated.}
Let $J_r$ be a $r$-periodic Jacobi matrix, more precisely a two-sided $r$-periodic Jacobi matrix,  it defines a difference operator
of period $r$ with  $a,b \in l^{\infty}(\BZ)$,  $b_n\not =0,\ $ acting on $l^2(\ZZ)$,
\begin{equation}\label{def}
a_{n+r}=a_{n} , b_{n+r}=b_{n},\  n\in\BZ.
\end{equation}
Note that the integer $r$ used has no reason to be the same of the preceding section.

To simplify the notations, we write the operator $J_r$ as
$$J_r=\left( \begin{array}{cccccccccc}
 .& .& .& .& .&...&.& .& .& . \\
 .& . & .& .& .& ...& .& .&.& .\\
 .& .& a_1& b_1& .& ...& .& .& .& .\\
 .& .& b_1 & a_2 & b_2 &...& .&.&.&.\\
 .& .& .& .& .& ...& . b_{r-1}&.&.&.\\
 .& .& .& .& .& ...& a_r& b_r&. &.\\
 .& .& .& .& .&...&.& .& .& . 
\end{array}\right)$$
with $a_{i+r}=a_i\ , \ b_{i+r}=b_i\ ,\ i\in\BZ$.
We denote by
$\displaystyle {\BB} =\prod_{n=1}^{r}b_{n}$. 

 For each positive integer $n,$ we introduce the matrix 
$E_n=(e_{ij})_{i,j\in {\BZ}}$, $e_{ij}=1$ if $\vert i-j \vert =n$ 
and  $e_{ij}=0$ otherwise. In other words, 
$E_n=D^n+D^{-n},
\ e_{k,k-n}=e_{k,k+n}=1,$ for $k \in \BZ.$

We know from for example \cite{Na}, \cite{MM},\cite{edin},\cite{BGHT} that there exist a polynomial $ P_r$ such that $\sigma(J_r)= P_r^{-1}([-2,2])$. Let's be more precise.

Following the presentation of \cite{MM}, the periodicity of the Jacobi 
matrix $J_r$ is expressible by the commutation relation $J_rD^r=D^rJ_r$
and the eigenvalues $z$ and $h$ 
associated to a common eigenvector $f$ are elements of the curve
\begin{eqnarray*}
{\mathcal{R}}_0 &=& \{ (z,h) \in \CC\times \CC^*,\ Lf=zf,\ D^r f=hf
,\ f\not= 0\}\\
&=&\{ (z,h) \in \CC\times \CC^*,\ \det(C_h-zI)=F(h,h^{-1},z)=0\}
\end{eqnarray*}
where
$$C_h-zI=\left( \begin{array}{cccccc}
   a_1-z& b_1& .& ...& .& b_0 h^{-1}\\ b_1 & a_2-z & b_2 &...& .&.\\
   .& .& .& ...& .& b_{r-1}\\
   b_r h& .& .& ...& b_{r-1}& a_r-z\\
 \end{array}\right)\ .$$
It is easy to see that
$$F(h,h^{-1},\, z)=(-1)^{r+1} \{\prod_{i=1}^r b_i(h+h^{-1})-P(z)\},$$
where $P(z)$ is a  polynomial of degree $r$ with leading coefficient 1. More precisely  if

$$\Delta (i, j)=\begin{vmatrix}
a_i-z & b_i       &\cdots  &\cdots &\cdots &\cdots \\                         
b_i   & a_{i+1}-z & b_{i+1}&\cdots &\cdots &\cdots \\            
\cdots&\cdots     &\cdots  &\cdots  &\cdots &\cdots \\
\cdots&\cdots     &\cdots  &\cdots  &\cdots &\cdots \\

\cdots&\cdots & \cdots &b_{j-2} &a_{j-1}-z&b_{j-1}\\
\cdots&\cdots &\cdots  & \cdots &b_{j-1}  &a_{j}-z\\
\end{vmatrix}$$
 some long but simple computations yield to the following

$$P(z)=(-1)^{r-1} [ \Delta(2,r)-b_r^2 \Delta (2,r-1)]\, .$$
 The polynomial 
$ P_r$ in the preceding statement  is
$$ P_r(X)= \frac{1}{\mathcal B}P(X).$$
\noindent We will refer to ${\mathcal B} $ as the modulus of 
the Jacobi matrix $J_r$ and to $P_r$ as the Na\u\i man polynomial, of degree $r$.\\
\noindent We have the relation $ P_r(J_r)= E_r$ .  From \cite{Na} (Theorem 3), \cite{BG},  the spectrum $\sigma(J_r)$ of $J_r$ is the inverse image of 
$[-2\, ,\ 2]$ under $ P_r$. It is a compact set in $\BC$, it 
consists of  $r$ algebraic arcs which 
may occasionally have common end points ( \cite{Na}) . 
See also \cite{Nai}, \cite{te}.
In the case of one sided, see \cite{be}. 
For the following we recall some geometric
properties of inverse polynomial images
(see \cite{peher}, \cite{schief}). The inverse image,
\(
P_r^{-1}([-2,2])
\),
consists of \(r\) Jordan arcs, denoted
by \(\ \K_1,  \K_2, \ldots,  \K_r\), where on each \( \K_j\),
\(j=1,\ldots,r\),
\(
P_r(z)
\)
is strictly monotone decreasing from \(+2\) to \(-2\).

If \(z_0 \in \mathbb{C}\) is a zero of
\(
P_r^2-4
\)
of multiplicity \(m\),
then exactly \(m\) Jordan arcs
\(
 \K_{i_1}, \ldots,  \K_{i_m}
\)
of
\(
P_r^{-1}([-2,2]),
\qquad
1 \le i_1 < i_2 < \cdots < i_m \le r,
\)
have \(z_0\) as common endpoint.
Two arcs \( \K_j,  \K_k\), \(j\neq k\),
cross each other at most once.

Then
$\sigma(J_r)=\K_{1}\cup\K_{2}\cup...\cup\K_{r}$ with for all $i=1,...,r$, ${ P_r}^{-1}([-2,2])=\K_i$, we will sometimes denote it by $\K$.

 According to a transfer theorem
of Fekete \cite{Fekete}, we have that
$${\mathcal C} (\sigma(J_r))= {|\mathcal B|}^{\frac{1}{r}}.$$

We have $||P||_{\infty, K}=2|{\mathcal B}|.$
The Theorem \ref{ostro} shows clearly that the polynomial $ P(X)$ is the  Chebyshev 
polynomial of $ \sigma(J_r)$, the diameter
$[-2{\mathcal B}\ ,\ 2{\mathcal B}]$ 
supports the disc centered at
the origin, with radius $2|{\mathcal B}|$. 
\smallskip

Moreover (see for example \cite{peh}, Corollary 2),
\begin{prop}
We have 
$$P_r(z)=2\cosh(rG(z))\, ,$$
where $G$ is the complex Green function of $\CC\backslash \K$.

\end{prop}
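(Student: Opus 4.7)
The plan is to derive the identity $P(z)=e^{rG(z)}$ via a Joukowski-type change of variable on $\Omega=\CC\setminus\sigma(J_r)$, exploiting that the spectrum $\sigma(J_r)=\widetilde{P}^{-1}([-2,2])$ is calibrated in the strongest possible sense: each of its $r$ bands carries harmonic measure $1/r$ from infinity, so that $rG$ has all periods in $2\pi i\,\ZZ$ and $e^{rG(z)}$ descends to a single-valued holomorphic function on $\Omega$.

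First I would verify this calibration. Since $\widetilde{P}$ is a polynomial of degree $r$ whose preimage of $[-2,2]$ is the union of $r$ disjoint analytic arcs $\K_1,\dots,\K_r$, each $\K_j$ contains exactly one preimage of any regular value of $\widetilde{P}$ in $(-2,2)$. The Green function of $\CC\setminus[-2,2]$ with pole at infinity is $g_0(w)=\log\bigl|(w+\sqrt{w^2-4})/2\bigr|$; the pullback $g_0(\widetilde{P}(z))$ is harmonic on $\Omega$, vanishes on $\partial\Omega$, and behaves like $r\log|z|-\log|\BB|$ at infinity — exactly $r$ times the growth of the Green function $g$ of $\Omega$. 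By the uniqueness of the Green function with pole at infinity, $g_0(\widetilde{P}(z))=r\,g(z)$. Integrating the normal derivative over each $\K_j$ and using the bijectivity of $\widetilde{P}\colon \K_j\to[-2,2]$ then yields harmonic measure $1/r$ per band, so the periods of $rG$ around each $\K_j$ lie in $2\pi i\,\ZZ$ and $e^{rG(z)}$ is well-defined on $\Omega$.

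Next I would introduce the Joukowski lift $h(z)=\tfrac12\bigl(\widetilde{P}(z)+\sqrt{\widetilde{P}(z)^2-4}\bigr)$, taking the branch with $|h|>1$ on $\Omega$ that inverts the covering $u\mapsto u+u^{-1}$ of $\CC\setminus[-2,2]$ by $\{|u|>1\}$. Then $\log|h|$ is harmonic and positive on $\Omega$, vanishes on $\Gamma=\widetilde{P}^{-1}([-2,2])=\partial\Omega$ (where $|h|=1$), and satisfies $\log|h(z)|=r\log|z|-\log|\BB|+o(1)$ at infinity. Applying uniqueness of the Green function once more gives $\log|h(z)|=r\,g(z)$, and fixing the imaginary additive constant in $G$ appropriately gives $h(z)=e^{rG(z)}$ on $\Omega$. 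The Joukowski relation $\widetilde{P}(z)=h(z)+h(z)^{-1}$ combined with $P=\BB\,\widetilde{P}$ then delivers the identity of the proposition, in the form $P(z)=\BB\bigl(e^{rG(z)}+e^{-rG(z)}\bigr)=2\BB\cosh(rG(z))$, which is the content of $P(z)=e^{rG(z)}$ on the branched double of $\Omega$.

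The main obstacle is establishing single-valuedness of $e^{rG(z)}$ on $\Omega$, equivalently the calibration assertion that each band of $\sigma(J_r)$ carries harmonic measure exactly $1/r$: this is what makes $e^{rG(z)}$ a bona fide analytic function and is the crux of the whole argument. It is classical in the spectral theory of periodic Jacobi matrices but rests essentially on $\widetilde{P}$ being a genuine degree-$r$ covering of $[-2,2]$ from $\sigma(J_r)$; once this is in hand, the Joukowski comparison and the Green-function uniqueness that follow are routine.
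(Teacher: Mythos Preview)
The paper does not actually supply a proof of this proposition; it is stated bare and the text moves on. The closest thing to a proof in the paper is the argument given earlier for the calibrated real case, where it is shown that $f(z)=\cosh\bigl(N\!\int_{e_{2r}}^{z}\frac{R(t)}{\sqrt{q(t)}}\,dt\bigr)$ is a polynomial by working on the hyperelliptic double $W=\widehat{\CC\setminus E}$: one checks that $f$ is single-valued (periods in $2\pi i\,\ZZ$), invariant under the sheet interchange $T$, hence a rational function of $z$, and then polynomial since its only poles are at $\infty_\pm$. Your route is genuinely different: instead of building a function out of $G$ and showing it is a polynomial, you start from the polynomial $\widetilde{P}$ (already furnished by the Jacobi matrix), pull back the Green function of $\CC\setminus[-2,2]$ by $\widetilde{P}$, and invoke uniqueness of the Green function to identify $r\,g(z)$ with $\log\bigl|\tfrac12(\widetilde{P}+\sqrt{\widetilde{P}^2-4})\bigr|$. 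This is cleaner here because $\widetilde{P}$ is in hand from the outset, and it dispenses with the Riemann-surface machinery; the paper's approach, by contrast, is what one would use if one did \emph{not} yet know the polynomial and wanted to produce it from the calibration data alone.

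One point worth flagging: your computation ends at $P(z)=2\,\BB\cosh(rG(z))$, and you then assert that this ``is the content of $P(z)=e^{rG(z)}$ on the branched double of $\Omega$.'' In fact the formula as literally stated in the proposition cannot be right: $e^{rG}$ is not invariant under the sheet interchange (since $G\circ T=-G$), so it is a function on the double, not on $\CC$, and moreover its leading behaviour at infinity is $z^r/\C(\sigma(J_r))^r=z^r/|\BB|$, not $z^r$. What you have derived, $\widetilde{P}(z)=e^{rG(z)}+e^{-rG(z)}$, is the correct identity, and the paper's displayed formula should be read as a shorthand for it (consistent with the earlier $\cosh$ proposition). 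So your argument is sound, but you should state the conclusion as $P(z)=2\,\BB\cosh(rG(z))$ rather than endorse the imprecise form.
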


\subsubsection{How to obtain rational coefficients}

Approximating
 $J_r$ by rational coefficients $a_{i,q},b_{i,q}\in \QQ[i]$, we can construct $J_{r,q}$ as close of $J_r$ as we want and so, corresponding $P_{q}$ as close of $P$ as we want, with the coefficient of $P_{q}$ in $\QQ[i]$. We have also $\mathcal B_q$  in $\QQ[i]$ ,  with the coefficient of the corresponding Na\u\i man polynomial
$P_{r,q}$ in $\QQ[i]$.

As
$\sigma(J_{r,q})=
 P_{r,q}^{-1}([-2, 2])$ is the spectrum of the periodic Jacobi operator $J_{r,q}$ and as $\forall \epsilon>0$, there exists $q_0$ such that $\forall q\geq q_0; ||J_r-J_{r,q}||\leq \epsilon$, we conclude that in any neighborhood of $\sigma(J_r)$ there exists such $\sigma(J_{r,q})$.

So, without loss of generality we can suppose that $\sigma(J_r)$ is the spectrum of a periodic Jacobi matric with coefficients in $\QQ[i]$, that we do in the following. 
%
Let us denote by $A_{j}^{\pm} $ the points $ P_r^{-1}\{\pm 2\}$, the end points of $\K_j$, $P$ achieves its extrema at the end points of $\K_{j}$. As the degree of $ P_r$ is $r$, $ P_r$ has an unique zero in each  $\K_{j}$, denoted by $\xi_j$, $j=1,...,r$ and $\forall j=1,...,r$ it is strictly monotone decreasing from $\K_j$ to $[-2,2]$.

%


Let $T_n$ the Chebyshev polynomial of $[-2,2]$, then the polynomial $P_n$ defined by
$P_n=\mathcal B^{n}T_n( P_r)$ is the Chebyshev polynomial of degree $nr$ of $K$. In fact it verifies   clearly   \(K=  2\mathcal B^{n}T_n( P_r)^{-1}[-2 \mathcal B^{n},2\mathcal B^{n}]
\) and $[-2 \mathcal B^{n},2\mathcal B^{n}]$ is a supporting compact of the disc of radius $2|\mathcal B|^{n}$ centred at the origin.

Let $B_{k,n}$ such that $ P_r(B_{k,n})=2\cos \frac{k\pi}{n}$, then $T_n(B_{k,n})=(-2)^k$ and $|P_n|$ achieves its maximum at $A_{j}^{\pm} $ and the  $B_{k,n}$. The polynomial $P_n$  has $n$ zeros in each  $\K_{j}$. It enables us to prove the lemma.
%
%
%
%

\begin{lemma}
 $\mu_{P_n}$ converge to the equilibrium measure of $\sigma(J_r)$.
\end{lemma}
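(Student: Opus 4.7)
The plan is to reduce everything to the classical Chebyshev equidistribution on $[-2,2]$ via the degree-$r$ polynomial map $\tilde P$. The key structural fact is that $K=\sigma(J_r)=\tilde P^{-1}([-2,2])$, and on each arc $\K_j$ the restriction $\tilde P_j:=\tilde P|_{\K_j}\to[-2,2]$ is a $C^1$ diffeomorphism; consequently $\tilde P:K\to[-2,2]$ is an $r$-to-$1$ covering (branched only at the endpoints $A_j^\pm$).

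\textbf{Step 1: identify $\mu_K$ as a pullback.} The equilibrium measure of $[-2,2]$ is the arcsine law $d\nu(t)=\frac{dt}{\pi\sqrt{4-t^2}}$, with capacity $1$. By the standard polynomial-preimage principle for logarithmic potentials (if $R$ is a monic polynomial of degree $r$ then $\tilde P_*\mu_{\tilde P^{-1}(E)}=\mu_E$), we have
\[
\mu_K=\frac{1}{r}\sum_{j=1}^r (\tilde P_j)^*\nu,
\]
so that for any $f\in C^0(K)$,
\[
\int_K f\,d\mu_K=\frac{1}{r}\sum_{j=1}^r\int_{-2}^{2} f\bigl(\tilde P_j^{-1}(t)\bigr)\,d\nu(t).
\]

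\textbf{Step 2: locate the zeros of $P_n$.} Since $P_n=\beta^{-n}T_n(\tilde P)$, its zeros are exactly the preimages under $\tilde P$ of the zeros of $T_n$ in $[-2,2]$. Writing $\{\tau_{k,n}\}_{k=1}^n$ for the $n$ zeros of the Chebyshev polynomial $T_n$ on $[-2,2]$, the zeros of $P_n$ split across the arcs as $z_{k,n}^{(j)}=\tilde P_j^{-1}(\tau_{k,n})$, giving $n$ zeros per arc and total degree $nr$. Hence
\[
\int_K f\,d\mu_{P_n}=\frac{1}{nr}\sum_{j=1}^r\sum_{k=1}^n f\bigl(\tilde P_j^{-1}(\tau_{k,n})\bigr)
=\frac{1}{r}\sum_{j=1}^r\int_{-2}^{2}(f\circ\tilde P_j^{-1})\,d\sigma_n,
\]
where $\sigma_n=\frac{1}{n}\sum_k\delta_{\tau_{k,n}}$.

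\textbf{Step 3: pass to the limit.} The classical equidistribution theorem for Chebyshev polynomials (or a direct computation using $\tau_{k,n}=2\cos\frac{(2k-1)\pi}{2n}$) gives $\sigma_n\to\nu$ weakly on $[-2,2]$. For every fixed $j$, the function $f\circ\tilde P_j^{-1}$ is continuous on $[-2,2]$, so
\[
\int_{-2}^{2}(f\circ\tilde P_j^{-1})\,d\sigma_n\;\longrightarrow\;\int_{-2}^{2}(f\circ\tilde P_j^{-1})\,d\nu.
\]
Summing over $j$ and comparing with Step~1 yields $\int f\,d\mu_{P_n}\to\int f\,d\mu_K$ for every $f\in C^0(K)$, which is the desired weak convergence.

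\textbf{Main obstacle.} The only non-cosmetic point is justifying the pullback identity in Step~1 (the endpoints $A_j^{\pm}$ are branch points of $\tilde P$, so a little care is needed) and ensuring that the change-of-variables decomposition of $\mu_{P_n}$ in Step~2 is exact, not merely asymptotic. Both points are handled by the fact that $\tilde P_j$ is a genuine homeomorphism from $\K_j$ onto $[-2,2]$, so measures transport cleanly and the $r$ sheets glue up to $\mu_K$ by the characterization of the equilibrium measure via its logarithmic potential (or equivalently by Lemma~\ref{delta}, together with $\C(K)=|\mathcal B|^{1/r}$ being preserved in the limit).
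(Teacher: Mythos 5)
Your argument is correct, but it proceeds along a genuinely different route from the paper. The paper follows Simon's Theorem B1: it treats $P_n$ abstractly as a sequence of asymptotically optimal polynomials with zeros in $\K=\sigma(J_r)$, applies the Bernstein--Walsh inequality $\frac{1}{nr}\log|P_n(z)|\leq \log\bigl(\|P_n\|_{\K}^{1/nr}/\C(\K)\bigr)-\Phi_{\mu_\K}(z)$ together with $\|P_n\|_\K^{1/nr}\to\C(\K)$, then uses the Upper Envelope Theorem to compare $\Phi_\eta$ with $\Phi_{\mu_\K}$ quasi-everywhere for any weak limit point $\eta$ of the counting measures, and concludes $\eta=\mu_\K$ by a uniqueness theorem for potentials. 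You instead exploit the explicit composite structure $P_n=\beta^{-n}T_n(\tilde P)$: you pull back the arcsine law through the degree-$r$ map $\tilde P$, identify $\mu_{\sigma(J_r)}$ with the balanced pullback $\frac1r\sum_j(\tilde P_j)^*\nu$, and reduce the statement to the classical equidistribution of Chebyshev zeros on $[-2,2]$ via an exact change of variables on each arc $\K_j$. Your route is more explicit and gives finer information (exact zero locations arc by arc, which in fact also feeds the paper's subsequent $O(1/n)$ spacing lemma), whereas the paper's potential-theoretic argument is more robust: it needs only the sup-norm asymptotics and the location of the zeros, which is why the paper can immediately assert that the same proof gives $\mu_{P_n^l}\to\mu_\K$ and, later, transfer the conclusion to the integer-coefficient perturbations $\Gamma_a$ obtained via Rouch\'e. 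Two small points to tighten: the preimage principle you invoke must be taken in its strong form (the equilibrium measure of $\tilde P^{-1}([-2,2])$ is the multiplicity-counting pullback of $\nu$ normalized by $1/r$, not merely a measure whose pushforward is $\nu$ — the pushforward identity alone does not force equal mass $1/r$ on each arc), which you do acknowledge and which follows from Frostman's characterization since $\Phi_{\frac1r\tilde P^*\nu}(z)=\frac1r\bigl(\Phi_\nu(\tilde P(z))-\log|\BB|\bigr)$ is constant on $\K$; and note that $\tilde P$ is not monic (its leading coefficient is $1/\BB$), which changes the capacity bookkeeping ($\C(\K)=|\BB|^{1/r}$) but not the measure identity.
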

\begin{proof}
We will use the notations of section \ref{defs}.
The proof follows \cite[Theorem B1]{Simon}. 
By the Bernstein-Walsh lemma we have for $z\in\CC\backslash \K$,
\begin{equation}\label{*}
\frac{1}{nr}\log|P_n(z)|\leq \log\left( \frac{||P_n||_{\K}^{\frac{1}{nr}}}{\C(\K)}\right)-\Phi_{\mu_{\K}}(z)
\end{equation}
with the remark  that $\lim_n ||P_n||_{\K}^{\frac{1}{nr}}=\C(\K)$.\\
Now let $d\mu$ be a limit point of $(d\mu_n)_n$, where $d\mu_n$ is the normalized density of zeros of $P_n(z)$. Let $f$ be continuous with support contained in $\CC\backslash \K$, then $\int f d\mu_n=0$ because $P_n$ has its zeros in $\K$ and  if $d\mu_n \to d\mu$, then $\int f d\mu=0$ and $\text{supp}(d\mu)\subset \K$. By the Upper Envelope Theorem \cite[Theorem A7]{Simon}
$$\liminf_{n\to \infty}\Phi_{\mu_n}(z)= \Phi_{\mu}(z)$$
for all $z\in\CC\backslash \K$ except for $z$ in a polar set. Then letting $n\to\infty$ in (\ref{*}), we get
$$\Phi_{\mu}(z)\geq \Phi_{\mu_{\K}}$$
for all $z\in\CC\backslash \K$ except for $z$ in a polar set. By continuity of $\Phi_{\mu}$ and $\Phi_{\mu_{\K}}$ in $\CC\backslash \K$, we have $\Phi_{\mu}\geq \Phi_{\mu_{\K}}$ for all $z$ in $\CC\backslash \K$. By using  \cite[ Theorem A.21]{Simon}, we have $\mu=\mu_{\K}$. Thus, $\mu_{\K}$ is the only limit point of $(\mu_n)_n$ and so the limit is $\mu_{\K}$.
\end{proof}


With the same proof we deduce that for any $l\geq 1$, $\mu_{P_n^l}\to \mu_{\K}$.

In the following one can note by $M_n=||P_n||_{\sigma(J_r)}=2|\mathcal B|^n$.

From the preceding study we will deduce the following estimation, we'll need soon.
\begin{lemma}\label{arc}
For all $k=0,...,n-1$ between $B_{k,n}$ and $B_{k+1,n}$, there exists exactly one zero of $P_n$, denoted by $C_{k+1,n}$.
Moreover, if $C_{i,n}, C_{i+1,n}$ are two consecutive zeros of $P_n$ among the $n$ zeros in a fixed arc $K_j$, then
\[
\widehat{C_{i,n}C_{i+1,n}} = O\!\left(\frac{1}{n^{1/r}}\right).
\]
\end{lemma}

\begin{proof}
We have 
$ P_r(B_{k,n})=2\cos \frac{k\pi}{n}$, then \(
P(B_{k+1,n}) - P(B_{k,n}) = O\!\left(\frac{1}{n}\right).
\)
By Taylor's formula we have
\[
P_n(B_{k+1,n})
=
P_n(B_{k,n})
+
\sum_{i=1}^{r}
\frac{(B_{k+1,n}-B_{k,n})^i}{i!}
\,P_n^{(i)}(B_{k,n}).
\]
Then
\(
B_{k+1,n}-B_{k,n}
=
O\!\left(\frac1{n^{1/r}}\right)
\) from that we deduce 
$\widehat{B_{k+1,n}B_{k,n}}=O(1/n^{1/r})$ and the result.
\end{proof}
\subsubsection{How to obtain integer coefficients.}
Now, following  \cite[p 164]{fs} and its notations we consider, with $K=nr$,
\[P_n(z)= z^K+\frac{\gamma_1 z^{K-1}+\gamma_2 z^{K-2}+...+\gamma_K}{m},\quad\;\;m\in \NN^*,\,\gamma_j\in\ZZ[i].\]

Let us denote by $b=a^K, a\in\NN$ and by $ c= b!\,m^b$, $c>b\geq a$,  then $\displaystyle (P_n(z))^c=z^{Kc}+...$ and the coefficients of $\displaystyle z^{Kc-1}, z^{Kc-2},...,z^{Kc-b}$ are in $\ZZ[i]$. We can then determinate complex numbers, 
$\lambda_1^{(1)}, ...,\lambda_K^{(1)}, \lambda_1^{(2)}, ...,\lambda_K^{(2)}$,...,$\lambda_1^{(c-a)}, ...,\lambda_K^{(c-a)}$ such that the polynomial
\begin{eqnarray}
\Gamma_a(z)&=&(P_n(z))^c +\sum_{i=1}^{c-a}\sum_{j=1}^K\lambda_j^{(i)}z^{K-j}(P_n(z))^{c-a-i}\\
&=&(P_n(z))^c +\Delta_a(z)
\end{eqnarray}
has its coefficients in $\ZZ[i]$, also all $\lambda's$ are in the absolute value less than 2. We have
\begin{eqnarray*}
|\Gamma_a(z)-(P_n(z))^c |&\leq &2\sum_{i=1}^{c-a}\sum_{j=1}^K |z|^{K-j}|P_n(z)|^{c-a-i}\\
&\leq &2\sum_{j=1}^K |z|^{K-j}\frac{|P_n(z)|^{c-a}-1}{|P_n(z)|-1}
\end{eqnarray*}
$$\frac{|\Gamma_a(z)-(P_n(z))^c |}{|(P_n(z))^c |}\leq 2 \left(\sum_{j=1}^K |z|^{K-j}\right)\frac{|P_n(z)|^{-a}-|P_n(z)|^{-c}}{|P_n(z)|-1}\, .$$
%
%
%
%

On the lemniscate $|P_n(z)|=R_2$ with $R_2>1$ for example $R_2=M_n^{\frac{1}{n}}$, then with $M=\displaystyle\max_{|P_n(z)|=R_2}\sum_{j=1}^K |z|^{K-j}$,
\begin{eqnarray}\label{r}
|\Gamma_a(z)-(P_n(z))^c |&\leq &\frac{2M}{R_2^a(R_2-1)}|P_n(z)^{c}|\\
&\leq &\frac{1}{2} |P_n(z)^{c}| <|P_n(z)^{c}|
\end{eqnarray}
for $a$ sufficiently large.

\smallskip
We have the following estimation.
\begin{lemma}
For $n$ sufficiently large, the lemniscate $\{|P_n(z)|=M_n\}$ is in an $\epsilon$-neighborhood of $\sigma(J_r)$.
\end{lemma}
\begin{proof}
From proposition \ref{arc} we have $P_n(z)=2\mathcal B^n \cosh(nr G(z))$, with $g(z)=\Re G(z)$ the Green function of $\CC\backslash \K$. Then 
$$|P_n(z)|\leq M_n \Leftrightarrow |\cosh(nr G(z))|\leq 1 \Leftrightarrow |\sinh(nr g(z)|\leq 1\, .$$
Then $ \{|P_n(z)|=M_n\}$ is a subset of $\left\{|g(z)|\leq \dfrac{Argsh 1}{nr}\right\}$ , which is 
in an $\epsilon$-neighborhood of $\sigma(J_r)$ for $n$ sufficiently large.
\end{proof}

 Now we have the following picture:

\bigskip

\includegraphics[width=0.75\textwidth]{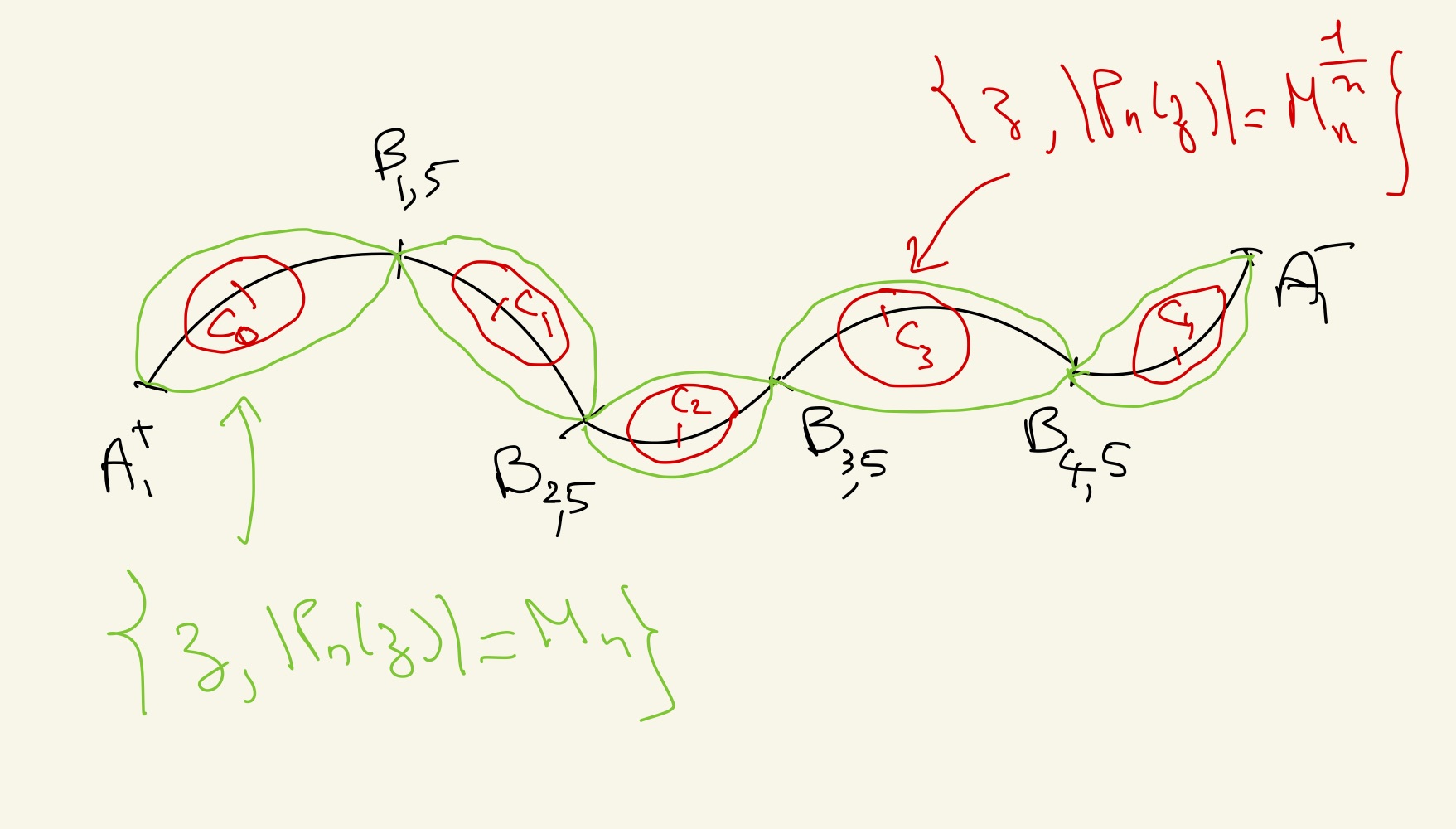}

\bigskip

\bigskip

For each $n$  let's denote by $Q_{nrc_n}$ a unitary polynomial with coefficient in $\ZZ[i]$, $\Gamma_a$, satisfying (\ref{r}).

By Rouch\'e's theorem, in each $\K_{j}$, $\Gamma_a$ has $nc_n$ zeros near from the $nc_n$ zeros of $(P_n(z))^{c_n}$.

We conclude that $\mu_{Q_{nrc_n}}-\mu_{P_n^{c_n}}=O(\frac{1}{n^{1/r}})$ and the result.


In conclusion, we have the following results.
\begin{thm}
If $\K = \sigma(J_r)$ is the spectrum of a two-sided periodic Jacobi matrix $J_r$
with $C(\K) \geq 1$, and if $U$ is an open set containing $\K$, there is a sequence $(P_n)$ of monic polynomials with coefficients in $\ZZ[i]$,  whose roots are in $U$ and are such that the associated zeros counting measure $\mu_{P_n}$ converge weakly to the equilibrium measure $ \mu_{\K }$ of $ \K$.

\end{thm}
For a one-sided periodic Jacobi matrix $J^{+}$, we associate the natural
two-sided periodic Jacobi matrix $J$.
The spectrum of a one-sided periodic Jacobi matrix has been studied,
for example in \cite{be}, Thm 2.7, and as
\(
\sigma_{\mathrm{ess}}(J) = \sigma_{\mathrm{ess}}(J^{+}),
\)
we have the following result.
\begin{cor}
Let  $\K^+$
the essential spectrum of a one-sided periodic Jacobi matrix $J_r$. If $U$ is an open set containing $\K^+$, there is a sequence $(P_n)$ of monic polynomials with  coefficients in $\ZZ[i]$,  whose roots are in $U$ and are such that the associated zeros counting measure $\mu_{P_n}$ converge weakly to the equilibrium measure $ \mu_{\K ^+}$ of $ \K^+$.

\end{cor}

\subsection{Approximation of $\delta \Omega$ by spectra of  periodic Jacobi matrix. The general case.}

In this section we will see that in every neighborhood of any finite union
of Jordan arcs $E$, we can find the spectrum of a two-sided periodic symmetric complex
Jacobi matrix whose capacity tends to that of $E$.
Doing this we will point out the following equivalence:\\
$\sigma$ is the spectrum of an $n$-periodic symmetric complex Jacobi matrix if
and only if there exists a polynomial of degree $n$ such that
\(
\sigma= T_n^{-1}([-1,1]).
\)
\begin{rem}
Notice that in the preceding assertion, ''the spectrum of an $n$-periodic symmetric complex Jacobi matrix" can be replace by
"the essential spectrum of a Jacobi operator associated with a symmetric tridiagonal matrix whose elements are asymptotically periodic". In effect,
let $\widetilde{A}$ be an operator with a periodic Jacobi matrix and let $A$ be a compact perturbation of $\widetilde{A}$ (for example, an asymptotically periodic Jacobi operator).
It is known from perturbation theory that any compact perturbation does not change the essential spectrum of a closed operator.
\end{rem}

We have yet seen the direct sense: see \S 7.2 with $P$ the Naiman
polynomial.
Now we will see the reciprocal sense in the following, where we also
demonstrate the announced result.
Notations are those of section \ref{7.3}.\\
Let $U$ be a neighborhood of $K$, then there exists $\varepsilon > 0$
such that $\Gamma^+(\varepsilon) \subset U$. We consider the Fekete
points of $\Gamma^+$, $\zeta_1^{(n)}, \dots, \zeta_n^{(n)}$, and the
associated Fekete polynomial
\[
T_n(z) = \prod_{k=1}^n (z - \zeta_k^{(n)}).
\]
Let
\[
\mu_n = \max_{\Gamma^+} |T_n(z)|.
\]
We have
\(
\forall n \quad \mu_n^{1/n} \geq C(\Gamma^+)
\quad \text{and} \quad
\lim \mu_n^{1/n} = C(\Gamma^+).
\)
Moreover, from \cite{fs} Thm G, the lemniscate domain defined by
\(
|T_n(z)| \leq \mu_n\, ,
\)
is contained in $\Gamma^+(\varepsilon)$ provided $n \geq n(\varepsilon)$.
Now we consider the polynomial
\(
{\mathcal T}_n(z) = \dfrac{1}{2\mu_n} T_n(z).
\)
From \cite{schiefer} Lemma 1 (and the references within), applied to our polynomials, we have the following
Pell-Abel equation.
Notice that not every Pell--Abel equation in $k[x]$, where $k$ is a field of characteristic different from 2, has a
solution, but every polynomial is a solution of a Pell--Abel equation
in $\mathbb{C}[x]$.
 (See also \cite{Se} p. 146--147 for a discussion on such equations.)
\begin{lemma}
For any polynomial ${\mathcal T}_n(z) = \frac{1}{2\mu_n} z^n + \cdots 
$, 
there exists a unique 
$\ell \in \{1,2,\dots,n\}$, a unique monic polynomial
\begin{equation*}
\mathcal{H}_{2\ell}(z) = \prod_{j=1}^{2\ell} (z - a_j) 
= z^{2\ell} + \cdots 
\end{equation*}
with pairwise distinct zeros $a_1, a_2, \dots, a_{2\ell}$, and a unique
polynomial $U_{n-\ell}(z) =  \frac{1}{2\mu_n}z^{\,n-\ell} + \cdots 
$
such that the polynomial equation
\begin{equation*}
{\mathcal T}_n^2(z) - 1 = \mathcal{H}_{2\ell}(z)\, U_{n-\ell}^2(z)
\end{equation*}
holds.
Further, there exists a monic polynomial 
$R_{\ell-1}(z) = z^{\ell-1} + \cdots \in \mathbb{P}_{\ell-1}$ such that
\begin{equation*}
{\mathcal T}_n'(z) = n\, R_{\ell-1}(z)\, U_{n-\ell}(z)
\end{equation*}
and, for $z \in \mathbb{C}$ with ${\mathcal T}_n(z) \notin [-1,1]$,
\begin{equation*}
{\mathcal T}_n(z) = \pm \cosh \left(
n \int_{a_j}^{z} \frac{R_{\ell-1}(w)}{\sqrt{\mathcal{H}_{2\ell}(w)}} \, dw
\right),
\end{equation*}
where $a_j$ is any zero of $\mathcal{H}_{2\ell}$.
\end{lemma}
{\it Note that the points $a_1, a_2,...,a_{2l}$ are exactly those zeros of ${\mathcal T}_n^2(z) - 1$ which have odd multiplicity.}
\begin{proof}
The first part is just an immediate consequence of the fundamental theorem of algebra (see \cite{schief}, Lemma 2). The second part is due to Peherstorfer (see \cite{peh}, Corollary 2).
\end{proof}
Now we see that $ {\mathcal T}_n^{-1}([-1,1])$ is the spectrum of a $n$ periodic Jacobi matrix.\\
We form  the hyperelliptic curve of equation
\(
\mu_n \bigl(h + h^{-1}\bigr) - T_n(z) = 0\, ,
\)
equivalently
\(
\omega^2 = 4\mu_n^2 \big( T_n^2(z) - 4\mu_n^2 \big).
\)
Remark that, if $1 \leq l\leq  n$, then $\mathcal{R}$ is singular.
\vspace{0.5cm}
From \cite{MM} it corresponds to a $n$-periodic complex Jacobi matrix $J$, such that
\(
T_n^{-1}\bigl([-2\mu_n, 2\mu_n]\bigr) = {\mathcal T}_n^{-1}([-1,1]) = \sigma(J).
\)
This spectrum of $J$ corresponds to at least $l$ Jordan arcs, moreover
\(
C(\sigma(J)) = \mu_n^{1/n}.
\)
For the commodity of the lecture 
%
%
%
%
we detail the example of \cite{MM} (Example 1 p.125) which corresponds to our case. Since $\mu_n \neq 0$,
\[
h(z) = \frac{1}{2\mu_n} \left( T_n(z) \pm \sqrt{T_n^2(z) - 4\mu_n^2} \right)
= \frac{1}{\frac{1}{2\mu_n} \left( T_n(z) - \sqrt{T_n^2(z) - 4\mu_n^2} \right)}.
\]
Therefore the curve is hyperelliptic of (arithmetic) genus
\(
g = n - 1,
\)
with two points $\infty_+$ and $\infty_-$ at infinity.\\
The hyperelliptic involution on the curve $\mathcal{R}$ maps
\(
(z,h) \mapsto (z,h^{-1}).
\) We denote it by $\tau$.
The meromorphic function $h$ has neither zeros nor poles except in the neighborhood of $z = \infty$.\\
When $z \to \infty$, we have
\(
h \sim \dfrac{\sqrt{T_n(z)}}{\mu_n}
\)
on sheet I, which shows that $h$ has a pole of order $N$.\\
Similarly, when $z \to \infty$,
\(
h \sim \dfrac{\mu_n}{T_n(z)}
\)
on sheet II.\\
Therefore the divisor $(h)$ of the function $h$ on the curve $\mathcal{R}$ is
\[
(h) = -n \infty_+ + n\infty_-.
\]
%
%
%
%
We return to the arguments as in \S 4.3.
Let $\{\alpha_j, \beta_j\}_{j=1}^g$ be a canonical homology basis of a
compact Riemann surface $\R$, satisfying
\[
\alpha_j \cdot \beta_k = \delta_{jk}, \qquad
\alpha_j \cdot \alpha_k = 0, \qquad
\beta_j \cdot \beta_k = 0.
\]
By Abel's theorem, the condition on the divisor
\(
(h) = -n \,\infty_+ + n \,\infty_-
\)
implies the existence of integers $n_j, m_j \in \mathbb{Z}$ such that
\[
\sum_{j=1}^g n_j \int_{\alpha_j} \omega
+ \sum_{j=1}^g m_j \int_{\beta_j} \omega
= n \int_{\infty_+}^{\infty_-} \omega
\]
for every holomorphic differential $\omega$ on $\R$.\\
Let $\eta$ be the normalized differential of the third kind with simple
poles at $\infty_+$ and $\infty_-$, with residues $-1$ and $+1$
respectively, normalized by
\(
\int_{\alpha_k} \eta = 0, \qquad k=1,\dots,g.
\)
Let $\{\zeta_1, \dots, \zeta_g\}$ be the basis of holomorphic
differentials dual to $\{\alpha_j\}$, i.e.
\(
\int_{\alpha_k} \zeta_j = \delta_{jk}.
\)
%
%
%
%
%
%
and 
\(
\eta = \frac{T_n' dz}{\sqrt{T_n^2 - 4\mu_n^2}},
\)
is the unique differential (up to scalar) with divisor $(\eta)=\D +\D^{\tau}-\infty_+-\infty_-$, where
\(
\mathcal D = \left(T_n'^{-1}(0)\right).
\) is the zero divisor on $\mathbb{c}$ of $T_n'$.
We want to apply this Theorem 2, of \cite{MM}
We will see that $\mathcal D$ is a regular divisor.
$\D$  is a positive divisor of degree  $g = n - 1$.
For a divisor $U$ on $\R$, we set $K(\R)$ the field of meromorphic functions
\(
L(U) = \{ f \in K(\R) : (f) + U \geq 0 \}
\)
and
\(
\Omega(-U) = \{ \text{meromorphic differentials } \omega \text{ on } \R : (\omega) \geq U \}.
\)
We will see that $\D$ is general, i.e.
\(
\dim L(\D) = 1
\)
and regular in the sense that for $k \in \mathbb{Z}$,
\[
\dim L(\D + k\infty_+ - (k+1)\infty_-) = 0.
\]
In the first step we show that $\D$ is general (see for example \cite{FK}, Prop. III.7.10).
Indeed, suppose there exists $f \neq 1$ in $L(D)$. By the Riemann–Roch theorem,
\[
\dim L(\D) = \deg \D - g + 1 + \dim \Omega(-\D).
\]
As $\deg \D = g$ and $\dim L(D) \geq 2$, we conclude
\(
\dim \Omega(-\D) \geq 1,
\)
and there is a holomorphic abelian differential $\omega$ such that $f\omega$ is also a holomorphic abelian differential.
Using the canonical differential basis, we conclude that $f$ is a rational function of $z$, hence a function of degree two, and so if $\nu\in\D$ then $(\nu,\nu^{\tau})\in \D\times\D$.
Contradiction with the definition of $\D$. Then $\dim L(D) = 1$.\\
In the second step we show that
\(
L(\D + k\infty_+ - (k+1)\infty_-) = \{0\}
\)
by induction.\\
For $k = 0$, $\dim L(\D) = 1$ and
\(
L(\D - \infty_-) \subset L(\D)
\)
since the function $f = 1$ belongs to the second space, but not the first.
\[
L\bigl(\D + k\infty_{+} - (k+1)\infty_{-}\bigr)
\subset
L\bigl(\D + k\infty_{+} - k\infty_{-}\bigr)
= \langle f_k \rangle
\]
Suppose
\(
\dim L\bigl(\D + k\infty_{+} - (k+1)\infty_{-}\bigr)=1.
\)
Then
\(
(f_k)+\D+k\infty_{+}-(k+1)\infty_{-}\ge 0.
\)
Define
\(
D^{(k)}=(f_k)+\D+k\infty_{+}-k\infty_{-}.
\)
Then
\(
D^{(k)}\ge 0
\qquad\text{and}\qquad
D^{(k)}-\infty_{-}\ge 0,
\)
that is,
\(
\infty_{-}\in D^{(k)}.
\)
As
\[
\deg D^{(k)}=\deg\ D=g,
\]
by the same preceding argument  we have also
\[
\infty_{+}\in D^{(k)}.
\]
Then
\(
D^{(k)} = U^{(k)} + \infty_{+},
\qquad
U^{(k)}\ge 0,
\)
and
\[
U^{(k)}
=
(f_k)+D+(k-1)\infty_{+}-k\infty_{-}
\ge 0,
\]
which contradicts the recurrence hypothesis.
Then for all $k$
\[
\dim L\bigl(\D+k\infty_{+}-(k+1)\infty_{-}\bigr)=0,
\]
and $\D$ is regula.
We can then apply Thm.~2 and $\R$ corresponds to an
$n$-periodic symmetric Jacobi operator $J$ with
\(
\sigma(J)=T_n^{-1}([-1,1]).
\)\\
$\sigma(J)$ is a subset of the lemniscate domain
\[
\{\, |T_n(z)| \leq 2\mu_n \,\}.
\]
With the same demonstration in Thm G of \cite{fs}, we have
\[
\sigma(J) \subset \{\, z : |T_n(z)| \leq 2\mu_n \,\} \subset \Gamma^{+}(\varepsilon)
\]
for $n \geq n_0$.
Then we can use lemma \ref{delta} to conclude that
\[
\exists K_n = \sigma(J_n), \text{ spectrum of a complex periodic Jacobi matrix, such that } \mu_{K_n} \to \mu_p^{+}.
\]

\subsection{Approximation of $\delta \Omega$ by spectra of  periodic Jacobi matrix. Remarks.\label{7.3}} 

\subsubsection{Some review of some spectra}\label{spec}

We recall some  definitions and properties, for more details see for example 
\cite{beclass}


An important part of the spectrum is the approximate point spectrum
$\sigma_a(A)$ that is defined as follows (see for example \cite{invit}).
\[
\sigma_a(A)
=
\left\{
\lambda \in \mathbb C :
\forall \varepsilon > 0\ \exists x \text{ with } \|x\| = 1
\text{ and } \|\lambda x - Ax\| < \varepsilon
\right\}.
\]
There are equivalent ways of
defining the approximate point spectrum.
\begin{align*}
\sigma_a(A)
&=
\left\{
\lambda \in \mathbb C :
\lambda - A \text{ is not bounded below}
\right\} \\
&=
\left\{
\lambda \in \mathbb C :
\exists \{x_n\} \subset X \text{ with } \|x_n\| = 1\ \forall n
\text{ and } \lambda x_n - Ax_n \to 0
\right\} \\
&=
\left\{
\lambda \in \mathbb C :
\exists \{x_n\} \subset X \text{ satisfying } x_n \neq 0
\text{ and } \lambda x_n - Ax_n \to 0
\right\} \\
&=
\left\{
\lambda \in \mathbb C :
\exists \{\lambda_n\} \subset \mathbb C \text{ with } \lambda_n \to \lambda
\text{ and } \exists \{x_n\} \subset X \text{ with }
\|x_n\| = 1\ \forall n
\text{ such that } \lambda_n x_n - Ax_n \to 0
\right\}.
\end{align*}

The points of the approximate point spectrum are also known as
approximate eigenvalues. Clearly,
\[
\sigma_p(A) \subset \sigma_a(A) \subset \sigma(A).
\]


\begin{proof}
Take an arbitrary $\lambda \in \sigma_c(A)$. By the definition of the continuous
spectrum, the range of $\lambda - A$ is dense and consequently (by Lemma~2.8)
the operator $\lambda - A$ is not bounded below. Therefore,
$\lambda \in \sigma_a(A)$.
\end{proof}

The important topological properties of the approximate spectrum are included
in the next theorem.

\begin{thm}
For an operator $A \in \mathcal L(X)$ we have the following:
\begin{enumerate}
\item The approximate point spectrum $\sigma_a(A)$ is a closed set.
\item Every boundary point of the spectrum belongs to the approximate point
spectrum. That is,
\[
\partial \sigma(A) \subset \sigma_a(A)
\quad
(\text{and so } \sigma_a(A) \neq \varnothing).
\]
\item The complement of $\sigma_a(A)$ in $\sigma(A)$, i.e., the set
\[
\sigma(A) \setminus \sigma_a(A)
=
\left\{
\lambda \in \mathbb C :
\lambda - A \text{ is bounded below and not surjective} \right\},
\]
is an open subset of the complex plane.
\end{enumerate}
\end{thm}

\subsubsection{An hypothesis}

%

Recall  $K$ is a compact such that $\delta \Omega=\Gamma$, the boundary of the outer component, is symmetric with respect to the real axis. 
Let
\[
 \Gamma^+ = \Gamma \cap \{ z,\, \Im z \ge 0 \}
 \quad\text{and}\quad
 \Gamma^- = \Gamma \cap \{ z,\, \Im z \le 0 \} = \overline{\Gamma^+}.
\]

The starting point of these remarks is the following result:


%

\begin{lemma}\label{bek}
There exist a two-sided Jacobi matrix, $J$, of the form

$$J=\left( \begin{array}{cccccc}
 .& .& .& .& .&. \\
 .& . &  \overline{b_1}& .& .& .\\
 .&  \overline{b_1}& \overline{a_1}& b_0& .& .\\
 .& .& b_0& a_1 & b_1 &.\\
 .& .& .& b_1&. & .\\
 .& .& .& .& .& .
\end{array}\right)$$

such that $\Gamma=\sigma_{\mathrm{ess}}(J)$.
More precisely,
\[
 \Gamma^+ = \sigma_{\mathrm{ess}}(J^+), \qquad \Gamma^- = \sigma_{\mathrm{ess}}(J^-)
\]
with the notations of  section \ref{jac}.
\end{lemma}

\medskip
\noindent\textbf{Proof.}
From \cite{beJ}, Example 5.2 and \cite{beC}, p.27,  there exists a Jacobi operator
\[
 J^+ = \begin{pmatrix}
  a_1 & b_1 \\
  b_1 & a_2 & b_2 \\
      & b_2 & a_3 & \ddots \\
      &     & \ddots & \ddots
 \end{pmatrix}
\]
such that $\sigma_{\mathrm{ess}}(J^+) = \Gamma^+$.


For the commodity of the lector we recall the demonstration.
[Example 5.2] \cite{beJ}
Let $E \subset \mathbb{C}$ be compact. Furthermore, let $(b_n)_{n \ge 0}$ be
dense in $E$, and suppose that for any isolated element $e$ of $E$ there
exists an infinite number of indices $n$ with $b_n = e$.
We consider the bounded linear operator $\tilde A$ with diagonal matrix
representation, i.e.,
\[
\tilde b_n = b_n \quad \text{and} \quad \tilde a_n = 0, \qquad n \ge 0.
\]
By construction, $b_k$ is an eigenvalue of $\tilde A$ for any $k \ge 0$,
with geometric multiplicity given by the multiplicity of $b_k$ in
$(b_n)_{n \ge 0}$. From \cite[Theorem IV.5.2]{kato}, we may conclude that the
range of $zI - \tilde A$ is not closed if for any $\varepsilon > 0$ there
exists an $n \ge 0$ with
\[
0 < |b_n - z| < \varepsilon.
\]
Also, from Lemma~5.1 \cite{beJ}, 
\(
\sigma_{\mathrm{ess}}(\tilde A) = E.
\)
Let $(a_n)_{n \ge 0}$ tend to zero. The operator $A$ resulting as $J^+$, is a
compact perturbation of $\tilde A$, and thus has the same essential
spectrum $\sigma_{\mathrm{ess}}(A) = E$ by
\cite[Chap.~IV, Theorem~5.35]{kato}.

\smallskip

Then
\[
 (J^+)^* = \begin{pmatrix}
  \overline{a_1} & \overline{b_1} \\
  \overline{b_1} & \overline{a_2} & \overline{b_2} \\
                  & \overline{b_2} & \overline{a_3} & \ddots \\
                  &                & \ddots & \ddots
 \end{pmatrix}
 = J^-
\]
and
\[
 \sigma(J^-) = \overline{\sigma(J^+)} = \Gamma^-.
\]

As by construction
\[
 \sigma(J) = \sigma_{\mathrm{ess}}(J)
           = \sigma_{\mathrm{ess}}(J^+) \cup \sigma_{\mathrm{ess}}(J^-)
           = \Gamma.
\]

We will show, under some hypothesis, that there exists a sequence of periodic Jacobi operators $(J_N^+)$ such that
$$\mu_{\sigma(J_N^+) }\rightarrow \mu_{\Gamma^+}\, .$$

Here is the

{\bf Hypothesis}:

 We suppose that there exists a Jacobi matrix
\[
 J^+ = \begin{pmatrix}
  a_1 & b_1 \\
  b_1 & a_2 & b_2 \\
      & b_2 & a_3 & \ddots \\
      &     & \ddots & \ddots
 \end{pmatrix}
\]
such that $\sigma_{\mathrm{ess}}(J^+) = \Gamma^+$ and 
$\limsup |b_1...b_n|^{1/n}=\C(\Gamma^+)$.

Notice that this is not the case for the Jacobi matrix constructed in the proof of  Lemma \ref{bek}.

We will need the following result:

\begin{prop}\label{P}

Let $A$ be a one-sided Jacobi operator such that $\sigma(A)=\partial\sigma(A)$, and let $(A_p)_p$ be the family of $p$--periodic Jacobi operators associated. 
$\forall \epsilon>0, \exists p_0, \forall p\geq p_0, \sigma(A_p)\subset \sigma_\epsilon(A)$, where $ \sigma_\epsilon(A)$ denotes the $\epsilon$ neighborhood of $\sigma(A)$.

\end{prop}


\begin{rem}
In the preceding Proposition \ref{P}, $\sigma(A)$ can be replace by $\sigma_{\mathrm{ess}}(A)$.
See for example \cite{beclass} Section 3.
\end{rem}

We are ready to prove Proposition \ref{P}, where we use the notations of section \ref{spec}.
\begin{proof}

%
%
%


Let $\varepsilon > 0$. We show  that $\exists p_0$ if $p \ge p_0$, then
\[
\sigma(A_p) \subset \sigma_\varepsilon(A)
\quad \Longleftrightarrow \quad
\mathbb{C} \setminus \sigma_\varepsilon(A) \subset \Omega(A_p),
\]
where $\Omega(A_p)$ is the resolvent set of $A_p$.

Assume that for all $ p_0$ $\exists p \ge p_0$, $\mathbb{C} \setminus \sigma_\varepsilon(A) \not\subset \Omega(A_p)$, i.e.
$z_p \in \sigma(A_p)\cap \mathbb{C} \setminus \sigma_\varepsilon(A) $. 

In particular, $z_p \in \sigma(A_p)$, thus by  Theorem 2.3 and Rem 2.4 (\cite{beC}, \cite{beclass}),

\[
\inf_n \frac{\|(z_p - A_p) y_n^{(p)}\|}{\|y_n^{(p)}\|} = 0,
\]
where 
$y_n^{(p)}=(q_0^{(p)}(z), q_1^{(p)}(z), ..., q_n^{(p)}(z), 0,0,...)$.

Since $\bigcup_p \sigma(A_p)$ is bounded, we may assume, without loss of generality,
that the sequence $(z_p)$ converges to some $z \in \overline{\bigcup_p \sigma(A_p)}$.

We also have $z_p \in \mathbb{C} \setminus \sigma_\varepsilon(A)$, hence
\[
\forall p,\quad d(z_p, \sigma(A)) \ge \varepsilon,
\]
and by continuity of the distance function,
\[
d(z, \sigma(A)) \ge \varepsilon.
\]

Note that for all $n < p+1$,
\[
y_n^{(p)} = y_n,
\]
and
\[
\lim_{p \to \infty} \frac{(z_p - A_p)y_n^{(p)}}{\|y_n^{(p)}\|}
= \frac{(z - A) y_n}{\|y_n\|}.
\]

We have on one hand
\[
\inf_p \left( \inf_n \frac{\|(z_p - A_p) y_n^{(p)}\|}{\|y_n^{(p)}\|} \right) = 0.
\]

On the other hand,
\[
\frac{(z_p - A_p) y_n^{(p)}}{\|y_n^{(p)}\|}
= \frac{(z_p - z) y_n^{(p)}}{\|y_n^{(p)}\|}
+ \frac{(z - A_p) y_n^{(p)}}{\|y_n^{(p)}\|},
\]
and $|z_p - z| \to 0$ as $p \to \infty$. Then
\(
\displaystyle\inf_p \frac{\|(z_p - A_p) y_n^{(p)}\|}{\|y_n^{(p)}\|}  =\frac{\|(z - A) y_n\|}{\|y_n\|}. 
\)

So we will have $ \displaystyle\inf_n\frac{\|(z - A) y_n\|}{\|y_n\|}=0$,
 we obtain the desired contradiction.
\end{proof}

Let $\Gamma^+$ verifying the Hypothesis. Then
the compact set $\Gamma^+$ and $\sigma(J_N^+)$ verify the hypothesis of Lemma \ref{delta}, we conclude that 
$$\mu_{\sigma(J_N^+) }\rightarrow \mu_{\Gamma^+}\, .$$

\subsubsection{An example}

A question is: is the Hypothesis always verified?

At least, we see, with an example \cite{boris}, that it can be verified.
 
\subsection{The end}

We will need the following result
\begin{lemma}
Let $K$ be a compact set of $\CC$, then
\begin{enumerate}
\item If $T_n$ is the Chebychev polynomial of $K$, then $\overline{T_n}$ is the Chebychev polynomial of $\bar{K}$,
 \item $\C(K)=\C(\bar{K})$.
\end{enumerate}
\end{lemma}
\begin{proof}
It is clear that the second point will follow from the first, but also from the fact that if $P_n$ a monic polynomial of degree $n$ then
 \[\Vert P_n||_{\infty,K}=||\overline{P_n}\Vert_{\infty,\bar{K}}.\]
\end{proof}
The second step is to verify that $\lim\mu_{\overline{P_n}}=\mu_{\Gamma^-}$ where $\Gamma^-=\Gamma\backslash \Gamma^+$.

The third step is to remark that $P_n\bar{P_n}$ is a monic polynomial with integer coefficients and that $\lim_n \mu_{P_n\bar{P_n}}=\frac{1}{2}(\mu_{\Gamma^+} +\mu_{\Gamma^-})$. Which follows from the definition of $\mu_{P_n\bar{P_n}}$ and the previous results.

The ultimate step is to verify that $\frac{1}{2}(\mu_{\Gamma^+} +\mu_{\Gamma^-})=\mu_\Gamma$. For this we look at the energy.
Let $\mu_1=\frac{1}{2}\mu_{\Gamma^+}$ and $\mu_2=\frac{1}{2}\mu_{\Gamma^-}$. We have $v(K)=I(\mu_K)=\ln(1/\C(K))$.

\begin{eqnarray*}
v(\Gamma^+\cup\Gamma^-)\leq I(\frac{1}{2}(\mu_{\Gamma^+} +\mu_{\Gamma^-}))&=&I(\mu_1)+I(\mu_2)+2\int\Phi_{\mu_1}(x)\, d\mu_2(x)\\
&=&\frac{1}{4}\ln\left(\frac{1}{\C(\Gamma^+)}\right)+\frac{1}{4}\ln\left(\frac{1}{\C(\Gamma^-)}\right) +2\int\Phi_{\mu_1}(x)\, d\mu_2(x)\, .
\end{eqnarray*}

From the beginning we have that $\C(\Gamma^+)=\C(\Gamma^-)=\C(\Gamma$). Moreover as $\Gamma^+$ and $\Gamma^-$ are disjoint we have that on $\Gamma^-$,
$\Phi_{\mu_1}(x)\leq \frac{1}{2}\ln(\frac{1}{\C(\Gamma^+)})$. Finally
$$I(\frac{1}{2}(\mu_{\Gamma^+} +\mu_{\Gamma^-}))\leq I(\mu_{\Gamma^+\cup\Gamma^-})=I(\mu_{\Gamma})\, ,$$
and by definition of the equilibrium measure $I(\frac{1}{2}(\mu_{\Gamma^+} +\mu_{\Gamma^-}))=I(\mu_{\Gamma})\, .$

This end the demonstration.

\end{document}